\pgfplotsset{compat=newest}
\newcommand{\R}{\textnormal{I\kern-0.21emR}}
\newcommand{\N}{\textnormal{I\kern-0.21emN}}
\renewcommand{\geq}{\geqslant}
\renewcommand{\leq}{\leqslant}
\def\B{{\mathbb B}}
\def\e{{\varepsilon}}
\newtheorem*{theorem*}{Theorem}
\newtheorem{notation}{Notation}  
\newtheorem{theorem}{Theorem}  
\newtheorem{proposition}{Proposition}
\newtheorem{definition}{Definition}
\newtheorem{lemma}{Lemma}
\newtheorem{claim}{Claim}
\theoremstyle{definition}\newtheorem{remark}{Remark}
\def\O{{\Omega}}
\def\n{{\nabla}}
\def\p{{\varphi}}
 \newcommandx{\domenec}[2][1=]{\todo[linecolor=purple,backgroundcolor=blue!20,bordercolor=purple,#1]{#2}}
 \newcommandx{\change}[2][1=]{\todo[linecolor=blue,backgroundcolor=blue!25,bordercolor=blue,#1]{#2}}
 \newcommandx{\info}[2][1=]{\todo[linecolor=green,backgroundcolor=green!25,bordercolor=green,#1]{#2}}
 \newcommandx{\improvement}[2][1=]{\todo[linecolor=yellow,backgroundcolor=yellow!25,bordercolor=yellow,#1]{#2}}
  \newcommandx{\biblio}[2][1=]{\todo[linecolor=blue,backgroundcolor=magenta!25,bordercolor=blue,#1]{#2}}
\newcommand{\dbtilde}[1]{\tilde{\raisebox{0pt}[0.85\height]{$\tilde{#1}$}}}
\begin{document}
\nocite{*}
\title{Constrained control of gene-flow models}


\author{Idriss Mazari\footnote{CEREMADE, UMR CNRS 7534, Universit\'e Paris-Dauphine, Universit\'e PSL, Place du Mar\'echal De Lattre De Tassigny, 75775 Paris cedex 16, France, (\texttt{mazari@ceremade.dauphine.fr})},  \quad Dom\`enec Ruiz-Balet$^\dag{} ^\S$ ,\quad Enrique Zuazua
\footnote{ Chair in Applied Analysis, Alexander von Humboldt-Professorship,
Department of Mathematics,
Friedrich-Alexander-Universit\"{a}t Erlangen-N\"{u}rnberg 91058 Erlangen, Germany} \footnote{Chair of Computational Mathematics, Fundaci\'on Deusto, Av. de las Universidades, 24,  48007 Bilbao, Basque Country, Spain} \footnote{ Departamento de Matem\'eticas, Universidad Aut\'onoma de Madrid, 28049 Madrid, Spain}}
\date{\today}

\maketitle
\begin{abstract}
In ecology and population dynamics, gene-flow refers to the transfer of a trait (e.g. genetic material)  from one population to another. This phenomenon is of great relevance in studying the spread of diseases or the evolution of social features, such as languages. From the mathematical point of view, gene-flow is modelled using bistable reaction-diffusion equations. The unknown is the proportion $p$ of the population that possesses a certain trait, within an overall population $N$. In such models, gene-flow is taken into account by assuming that the population density $N$ depends either on $p$ (if the trait corresponds to fitter individuals) or on the location $x$ (if some zones in the domain can carry more individuals). Recent applications stemming from mosquito-borne disease control problems or from the study of bilingualism have called for the investigation of the controllability properties of these models. At the mathematical level, this corresponds to boundary control problems and, since we are working with proportions, the control $u$ has to satisfy the constraints $0\leq u \leq 1$. In this article, we provide a thorough analysis of the influence of the gene-flow effect on boundary controllability properties. We prove that, when the population density $N$ only depends on the trait proportion $p$, the geometry of the domain is the only criterion that has to be considered. We then tackle the case of population densities $N$ varying in $x$. We first prove that, when $N$ varies slowly in $x$ and when the domain is narrow enough, controllability always holds. This result is proved using a robust domain perturbation method. We then consider the case of sharp fluctuations in $N$: we first give examples that prove that controllability may fail. Conversely, we give examples of heterogeneities $N$ such that controllability will always be guaranteed: in other words the controllability properties of the equation are very strongly influenced by the variations of $N$. All negative controllability results are proved by showing the existence of non-trivial stationary states, which act as barriers. The existence of such solutions and the methods of proof are of independent interest.  Our article is completed by several numerical experiments that confirm our analysis.
\end{abstract}

\noindent\textbf{Keywords:} Control, reaction-diffusion equations, bistable equations, spatial heterogeneity, gene-flow models, staircase method, non-controllability .

\medskip

\noindent\textbf{AMS classification:} 49J20, 34F10, 35K57 .


\section{Introduction}
\paragraph{Motivations.}

In ecology and population dynamics, gene-flow refers to the transfer of a trait (e.g. genetic material)  from one population to another. This phenomenon strongly depends on the structure of the population density \cite{slatkin1987gene} as well as on the proportion of individuals that possess this trait inside the population. Typically, the populations involved in gene-flow phenomena are spatially separated, and the gene-flow effect occurs through spatial migrations. Given that it is a key factor in the evolution and differentiation of species, gene-flow has received a lot of attention from the biology community \cite{lenormand2002gene,gemmell2018genetic,bolnick2007natural,slarkin1985gene,bohonak1999dispersal}. One should also note that this effect has been observed as well in the evolution of plants \cite{mcdermott1993gene,levin1974gene}. This paper is devoted to the  controllability of biological systems involving this effect: is it possible, acting only on the boundary of the domain, to control the proportion of the trait within the population?

These boundary control problems arise naturally from population dynamics models and have several interpretations. For instance, one might consider the following situation: given a population of  mosquitoes, a proportion of which is carrying a disease, is it possible, acting only on the proportion of sick mosquitoes on the boundary, to drive this population to a state where only sane mosquitoes remain? From the application point of view, some mosquitoes that are immune to diseases such as malaria \cite{Raddi1128}, and it is therefore interesting to study the evolution of such proportions over space and time. Governments have recently tackled the issue of controlling diseases transmitted by mosquitoes by releasing genetically modified mosquitoes \cite{BBC}, and such questions have drawn the attention of the mathematical community in the past years \cite{PrivatVaucheletStrugarek}. Another example is that of linguistic dynamics: considering a population of individuals, a part of which is monolingual (speaking only the dominant language), the other part of which is bilingual (speaking the dominant and a minority language), is it possible, acting only on the proportion of bilingual speakers on the boundary of the domain, to drive the population to a state where there remains a non-zero proportion of bilingual speakers, thus ensuring the survival of the minority language? Such models are proposed, for instance, in \cite{UriarteIriberri}. In these works, as well as in a variety of other interpretations \cite{Barton,Bolnick,Gavrilet,Mirrahimi,StrugarekVauchelet,UriarteIriberri} of bistable models, the gene flow effect has been acknowledged as crucial in the underlying phenomenon: it states that, when we are interested in the proportion $p$ of a subgroup of a population density $N$, $N$ may depend on either $p$ (if for instance the subgroup is fitter) or on the space variable $x$ (if some zones in the domain are more favorable and can carry more individuals).  The goal of this article is to underline the complexity of the interaction between this gene-flow and the controllability properties of the system.

\paragraph{A motivating example: the spatially heterogeneous case.}
Let us give an example. We consider a population density $N=N(t,x)$, and we are interested in the dynamics of a proportion of the population, which will be denoted by $p=p(t,x)$. The classical gene-flow hierarchical system reads as follows: 

\begin{equation}\label{Eq:Motivational}
 \begin{cases}
  \frac{\partial N}{\partial t}-\Delta N=g(N,x)\text{ in }\R_+\times \O\\
    \frac{\partial p}{\partial t}-\Delta p-2\langle \n\ln N,\n p\rangle=f(p)\text{ in }\R_+\times \O\\
  \frac{\partial N}{\partial \nu}=0,\quad \frac{\partial p}{\partial \nu}=0\text{ on }\R_+\times \partial \O\\
    N(0,x)>0,\quad 0\leq p(0,x)\leq 1  \text{ in }\O.
 \end{cases}
\end{equation}
Typically, we can \color{black} choose a nonlinearity $g$ of monostable type\color{black}, such as $g(N,x)=N(\kappa(x)-N)$. In that case,  $\kappa(x)>0$ models the resources distribution available for the population  inside the domain. Monostable equations have been studied a lot since the seminal \cite{KOLMOGOROV37}. The nonlinearity $f$ is on the other hand assumed to be bistable; a typical example is $f(p)=p(p-\theta)(1-p)$ for some $\theta \in (0;1)$, see Figure \ref{Fi:Bi}.

Bistable reaction-diffusion equations are well-suited to describe the evolution of a subgroup of a population and are characterized by the so-called \emph{Allee effect}: there exists a threshold for the proportion of this subgroup such that, in the absence of spatial diffusion, above this threshold,  this subgroup will invade the whole domain (and drive the other subgroup to extinction) while, under this threshold, it will go extinct.

\begin{figure}[H]
\begin{center}
\begin{tikzpicture}[scale=2]
      \draw[scale=2,domain=-0.1:1.09,smooth,variable=\x,gray,dashed] plot ({\x},{2*\x*(1-\x)});
      \draw[scale=2,domain=0:1,smooth,variable=\x,blue, very thick] plot ({\x},{2*\x*(1-\x)});
 \draw[->,scale=2,gray] (-0.2,0) -- (1.3,0) ;
      \draw[->,scale=2,gray] (0,-0.2) -- (0,0.7) ;
      \node[scale=1] at (0,-0.1) {$0$};
      \node[scale=1] at (2,-0.1) {$\kappa(x)$};
      \node[scale=1] at (-0.25,1.25) {$g(\cdot,x)$};

    \end{tikzpicture}
\begin{tikzpicture}[scale=2]
      \draw[scale=2,domain=-0.1:1.0655,smooth,variable=\x,gray,dashed] plot ({\x},{4*\x*(\x-0.3)*(1-\x)});
      \draw[scale=2,domain=0:1,smooth,variable=\x,blue, very thick] plot ({\x},{4*\x*(\x-0.3)*(1-\x)});
      \draw[->,scale=2,gray] (-0.2,0) -- (1.3,0) ;
      \draw[->,scale=2,gray] (0,-0.2) -- (0,0.7) ;
      \node[scale=1] at (0,-0.1) {$0$};
      \node[scale=1] at (0.6,-0.1) {$\theta$};
      \node[scale=1] at (2,-0.1) {$1$};
      \node[scale=1] at (-0.25,1.25) {$f(\cdot)$};
    \end{tikzpicture}
\end{center}
\caption{Graph of a typical monostable nonlinearity $g(N,x)=N(\kappa(x)-N)$ (left) and typical bistable non-linearity $f(p)=p(p-\theta)(1-p)$ (right) .}\label{Fi:Bi}
\end{figure}
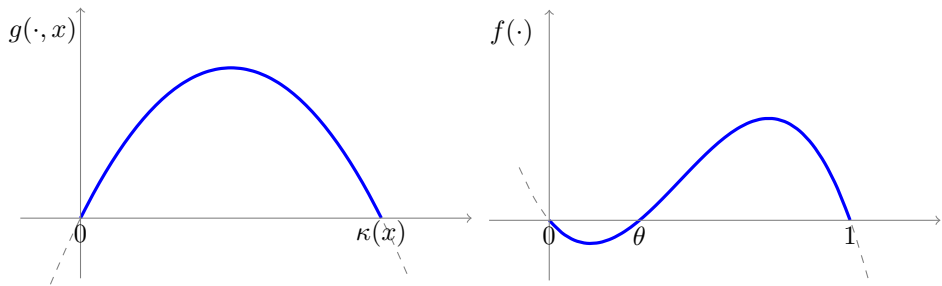

In that context, the goal is to drive the proportion $p$ to a spatially homogeneous equilibrium. Since it is often the case that we can not control the equation inside the domain (for instance when trying to control a mosquito population) we have to resort to boundary controls. As we will see, the shape of $N$ will have a drastic influence on such controllability properties.

In this article we assume that the first component has reached a steady state, and that the population distribution $N$ is stationary; in other words, we have $N=N(x)$. This reduces the gene-flow system to a scalar equation, in which we place a boundary control action.

\begin{equation*}
 \begin{cases}
  \frac{\partial p}{\partial t}-\Delta p-2\langle \n\ln N(x),\n p\rangle=f(p) \text{ in }\R_+\times \O\\
  p(t,\cdot)=u(t,\cdot)\text{ on }\R_+\times \partial \O \\
  0\leq p(0,x)\leq 1\text{ in }\O.
 \end{cases}
\end{equation*}
The state $p$ is a proportion, hence our boundary control action has to satisfy the bounds $$0\leq u(x,t)\leq 1.$$ Such bounds on the control are known to lead to  fundamental obstructions to the controllability as noticed in \cite{PoucholTrelatZuazua,BRZ}. 

Since our results fit in a growing body of literature, let us recall several results that hold in the absence of the gene-flow effects before stating our contributions.

\subsection{Known results regarding the constrained controllability of  bistable equations}

In \cite{PoucholTrelatZuazua,BRZ}, the controllability to $0$, $\theta$ or $1$ of the equation 
$$\frac{\partial p}{\partial t}-\Delta p=f(p)$$ with a constraint on the boundary control is  carried out using the staircase method \cite{PighinZuazua,CoronTrelat}. The emergence of nontrivial steady states (that is, steady states that are not identically equal to 0, 1 or $\theta$) is the main cause of lack of controllability for large domains, while controllability holds by constructing paths of steady states. 
In \cite{TrelatZhuZuazua}, the  equation
$$\frac{\partial p}{\partial t}-\Delta p=p(p-\theta(t))(1-p)$$ is considered, but this time, it is the Allee parameter $\theta=\theta(t)$ that is the control parameter and the target is a travelling wave solution. 
In \cite{PrivatVaucheletStrugarek},  an optimal control problem for the equation without diffusion
$$\frac{\partial p}{\partial t}=f(p)+u(t),$$ and with an interior control  $u$ (rather than a boundary one) is considered. We underline that, in their study, $u$ only depends on the time, and not on the space variable.

\subsection{Main contributions of the paper}

Our main contributions can be informally stated as follows:
\begin{itemize}
 \item \textbf{Slowly varying population density.} In that context, we assume that the population density $N$ varies very little from one point to the other. In other words, we assume that there exists a constant $N_0$, a function $n=n(x)$ and a small $\e>0$ such that $N=N(x)=N_0+\varepsilon n(x)$. If we think about the motivating example \eqref{Eq:Motivational}, this amounts to requiring that the resources distribution $\kappa$ is itself slowly varying $\kappa=\kappa_0+\e \eta(x)$ for some constant $\kappa_0$ and some function $\eta$. For this reason, we will also refer to this model as \emph{slowly varying spatial heterogeneity}. In that context, the heterogeneity does not qualitatively affect  the result from the homogeneous setting, see Theorem \ref{Th:SHSlow}. The proof is based on a very fine domain perturbation method.
 
 \item \textbf{Strongly varying population density}. In contrast to Theorem \ref{Th:SHSlow}, we consider the case when $N$ has rapid variations, which in turn may be interpreted as the effect of a \emph{strongly varying spatial heterogeneity}. In this case, the situation changes dramatically. Namely:
 \begin{itemize}
    \item Rapid variations of population inside the domain lead to lack of controllability due to the emergence of nontrivial steady-states which act as barriers. As an example of such a phenomenon, we study the case $\O=\mathbb{B}(0;R)$,  $N_\sigma(x)=e^{-\frac{||x||^2}\sigma}$, and we show that, whenever $\sigma>0$ is small enough, there exist non-trivial stationary solutions to the state equation on $p$, with boundary values either $0$ or $1$.  In Theorem \ref{NewTheo}, we give explicit assumptions on the drift to ensure the apparition of such non-trivial steady-states. This means, in terms of applications, that if there is piece of the domain with a high variation of the concentration of individuals, the control will fail.
           \item On the other side, if the population shows a rapid decay towards the interior, then, surprisingly, there exist\textcolor{black}{s} a critical threshold in $\sigma$ for which, independently of the size of the domain, there are no non-trivial solutions acting as barriers, so that controllability is achievable. In Theorem \ref{Th:Fin}, we show that this is  the case for $$N_\sigma(x)=e^{\frac{\|x\|^2}{\sigma}},$$ and prove this result for any $\sigma$ small enough. This is in sharp contrast with the homogeneous setting (homogeneous being understood in the sense that no drift is present) in which there was a critical size of the domain for which there was always one barrier \cite{PoucholTrelatZuazua,BRZ}. This result is proved using spectral analysis.
      
    \item In  Theorem \ref{Th:Rate}, we derive explicit decay rates on the spatial heterogeneity $N$ to ensure controllability, thus obtaining a result analogous to the results set  in the homogeneous setting \cite{PoucholTrelatZuazua,BRZ}.

 \end{itemize}
 \item\textbf{Infection-dependent limit. }So far, we have only mentioned the spatially heterogeneous case $N=N(x)$, but another context which is highly relevant for applications is that of infection-dependent models. This model, which also accounts for the gene-flow effect, is for instance obtained in \cite[Section 6]{NadinStrugarekVauchelet}, in which the authors reduce a system of 2x2 coupled reaction-diffusion equations to a scalar one and prove convergence, under some assumptions, to either spatially heterogeneous models or to infection-dependent models, which write \begin{equation*}
\frac{\partial p}{\partial t}-\mu \Delta p+2|\n p|^2\frac{h'(p)}{h(p)}=p(1-p)(p-\theta)
\end{equation*} for some $\theta\in (0;1)$ and some function $h$. This equation is most notably studied in \cite{NadinStrugarekVauchelet,StrugarekVauchelet}. This amounts to requiring that the population density $N$ depends on $p$: $N=N(p)$. In that case,  we show that the controllability results are exactly analogous to the ones obtained in the homogeneous setting, see Proposition \ref{Th:GF}.

\end{itemize}

\paragraph{Structure of the paper}

The structure of the paper is the following:

\begin{itemize}
 \item In Section \ref{MainRes}, we present the mathematical setting and the results 
 \item Sections \ref{Proof1},\ref{Proof2},\ref{Proof3},\ref{Proof4},\ref{Proof5} are devoted to the proofs of the theorems in order of presentation.
 \item Finally, in Section \ref{Conclusion}, we draw some concluding remarks and state some open problems.
\end{itemize}

\section{Setting and main results}\label{MainRes}

\paragraph{The equations and the control systems}
Let us first write down our systems. We say that $f:\R\to \R$ is \emph{bistable non-linearity} if: 
\begin{enumerate}
\item $f$ is $\mathscr C^\infty$ on $[0,1]$,
\item There exists $\theta\in (0;1)$ such that $0\,, \theta$ and 1 are the only three roots of $f$ in $[0,1]$. $\theta$ is called the Allee parameter.
\item $f'(0)\,, f'(1)<0$ and $f'(\theta)>0$,
\item Without loss of generality, we assume that $\int_0^1f>0$. In the typical example $f(p)=p(p-\theta)(1-p)$, this amounts to requiring that $\theta$ satisfies 
$\theta<\frac12$.

\end{enumerate}
We gave an example of such a bistable non-linearity in Figure \ref{Fi:Bi}.

We write our two models, the spatially heterogeneous and the infection-dependent one, in a synthetic way: we consider, in general, a population density of the form $N=N(x,p)$.  Infection-dependent models correspond to $N=N(p)$ and spatially heterogeneous models correspond to $N=N(x)$. With a bistable non-linearity $f$ and such a function $N$, in a domain $\O\subset \R^d$, the equation we consider writes, in its most general form
\begin{equation}\label{Eq:Main}\frac{\partial p}{\partial t}-\Delta p-2\langle\n_x\left(\ln(N(x,p))\right),\n p\rangle=f(p).\end{equation}

 Of particular relevance are the spatially homogeneous steady-states of this equation: $p\equiv 0$, $p\equiv \theta$ and $p\equiv 1$. \emph{Our objective in this article is to investigate whether or not it is possible to control any initial datum to these spatially homogeneous steady-states.}

 Let us formalize this control problem. Given an initial datum $p_0\in L^2(\O)$ such that 
$$0\leq p_0\leq 1$$ we consider the control system

\begin{equation}\label{Eq:AdvectionControl}
\left.\begin{cases}
\displaystyle\frac{\partial p}{\partial t}-\Delta p-2\langle \n \ln(N), \n p\rangle=f(p)&\text{ in }\R_+\times\O\,, 
\\p=u(t,x)&\text{ on }\R_+\times\partial \O ,
\\p(t=0,\cdot)= p_0,&
\end{cases}\right.
\end{equation}
where, for every $t\geq 0\,, x\in \partial \O$, \begin{equation}
\label{Eq:ConstraintControl}u(t,x)\in [0,1]\end{equation} is the control function. \eqref{Eq:ConstraintControl} is a natural constraint since we recall that $p$ stands for a proportion of the population. 
Our goal is to answer the following question:
\textit{Given any initial datum $0\leq p_0\leq 1$, is it possible to drive $p_0$ to $0$, $\theta$, or $1$ in (in)finite time with a control $u$ satisfying \eqref{Eq:ConstraintControl}?} In other words, can we drive any initial datum to one of the spatially homogeneous steady-states of the equation? If one thinks about infected mosquitoes, driving any initial population to 0 is relevant for controlling the disease while, if one thinks about mono or bilingual speakers, driving the initial datum to the intermediate steady-state $\theta$ ensures the survival of the minority language.

Let us denote  the steady-states as follows
$$\forall a\in \{0,\theta,1\}\,,z_a\equiv a.$$ By controllability, we mean the following: let $a\in \{0,\theta,1\}$, then
\begin{enumerate}
\item[$\bullet$] \underline{Controllability in finite time:} we say that \emph{$p_0$ is controllable to $a$  in finite time} if there exists a finite time $T<\infty$ such that there exists a control $u$ satisfying the constraints \eqref{Eq:ConstraintControl} and such that the solution $p=p(t,x)$ of \eqref{Eq:AdvectionControl} satisfies

$$p(T,\cdot)=z_a\text{ in }\O.$$
\item[$\bullet$] \underline{Controllability in infinite time:} we say that \emph{ $p_0$ is controllable to $z_a$ in infinite time}  if there exists a control $u$ satisfying the constraints \eqref{Eq:ConstraintControl} such that the solution $p=p(t,x)$ of \eqref{Eq:AdvectionControl} satisfies
$$p(t,\cdot)\underset{t\to \infty}{\overset{\mathscr C^0(\O)}\longrightarrow}z_a.$$
\end{enumerate}
\begin{remark}
Note that, in the definition of controllability in finite time, we do not ask that the controllability time be small; it cannot anyway be arbitrarily small because of the constraint $0\leq u\leq 1$ \cite{PighinZuazua}. Such constraints can indeed lead to  lack of controllability in fixed time horizon.  The question of the minimal controllability time for such bistable equations is, as far as the authors know, completely open at this point.\end{remark}
\begin{definition}\label{De:Contr}
{We say that \eqref{Eq:AdvectionControl} is controllable to $\color{black}a\color{black}$ in (in)finite time if it is controllable to $z_a$ in (in)finite time for any initial datum $0\leq p_0\leq 1.$}\end{definition}

We consider two cases for the flux $N=N(x,p)>0$ which have been discussed in \cite{Barton}:
\begin{enumerate}
\item[$\bullet$]\underline{The spatially heterogeneous model:} In this case, $N=N(x,p)$ is of the form
\begin{equation}\label{Eq:SH}\tag{$H_1$}
N=N(x){\color{black}>0}\color{black}.
\end{equation}

\item[$\bullet$]\underline{The infection-dependent model:} In this case, the function $N=N(x,p)$ assumes the form 

\begin{equation}\label{Eq:GF}\tag{$H_2$}
N(x,p)=N(p)\color{black}>0\color{black}.
\end{equation} 
\end{enumerate}

\subsection{Statement of the main  controllability results}

\subsubsection{A brief remark on the statement of the Theorems}
We are going to present controllability and non-controllability results for gene-flow models and spatially heterogeneous ones. Regarding obstructions to controllability, the main obstacles are the existence of non-trivial steady-states, namely solutions to 
$$-\Delta p-2\left\langle \frac{\n N}{N},\n p\right\rangle=f(p)\text{ in }\O$$ associated with the boundary conditions $p=0$ or $p=1$ on $\partial \O$. However, given that the existence of non-trivial solutions for the Dirichlet boundary conditions $p=0$ is obtained through a sub and super solution methods, the natural quantity appearing is the inradius of the domain.
\begin{definition}[Inradius of a domain]
Let $\O\subset \mathbb{R}^N$ be bounded.

 $$\rho_\O=\sup\{r>0\,, \exists x\in \O\,, \mathbb B(x,r)\subset \O\},$$
\end{definition}
The non-existence of non-trivial solutions is usually done through the study of the first Laplace-Dirichlet eigenvalue 
$$\lambda_1^D(\O):=\inf_{p\in W^{1,2}_0(\O)\,, p\neq 0}\frac{\displaystyle\int_\O |\n p|^2}{\displaystyle\int_\O p^2},$$ which explains why both quantities $\rho_\O$ and $\lambda_1^D(\O)$ appear in our first results. Using Hayman-type inequalities, see \cite{BrascoDePhilippis}, we could rewrite $\lambda_1^D(\O)$ in terms of the inradius when the set $\O$ is convex. Indeed, it is proved in \cite[Proposition 7.75]{BrascoDePhilippis} that, when $\O$ is a convex set with $\rho_\O<\infty$ then 
$$\frac1{c\rho_\O^2}\leq \lambda_1^D(\O)\leq \frac{C}{\rho_\O^2},$$ so that the theorems can be recast in terms of inradius only in the case of convex domains.

\subsubsection{Spatially heterogeneous models} In this case, we work under assumption (\ref{Eq:SH}), i.e with $N=N(x)>0\text{ in }\O$. As explained in the introduction, we will treat two cases: slowly varying heterogeneities and rapidly varying ones. 
\paragraph{Slowly varying heterogeneity}
Let us first study the case of a  {slowly varying total population size}: we consider an environment with small spatial changes in the total population size; this amounts to requiring that 
$$\left|\frac{\n N}{N}\right|\ll1,$$
where $N$ satisfies \eqref{Eq:SH}. We can then formally write
$$N\approx N_0+\frac\e2 n(x),$$
where $N_0$ is a constant\footnote{We can assume, without loss of generality, that $N_0=1$. Indeed, the equation \eqref{Eq:AdvectionControl} is invariant under the scaling $N\mapsto \lambda N$ where $\lambda\in \R_+^*$.}.
 We consider, for a   function $n\in \mathscr C^1(\R^d;\R)$ and a parameter $\e>0$, the control system
\begin{equation}\label{Eq:AdvectionControlSlow}
\left\{\begin{array}{ll}
\frac{\partial p}{\partial t}-\Delta p-\e \langle \n n, \n p\rangle=f(p)&\text{ in }\R_+\times \O\,,
\\p=u(t,x)&\text{ on }\R_+\times\partial \O,
\\0\leq u\leq 1,&
\\p(t=0,\cdot)=p_0\,, 0\leq p_0\leq 1\,, &\end{array}\right.
\end{equation}
For simplicity, we assume that $n$ is defined on $\R^d$ rather than on $\O$. Since we already assumed that $N$ was $\mathscr C^1$, this amounts to requiring that $n$ can be extended in a $\mathscr C^1$ function outside of $\O$, which once again would follow from regularity assumptions on $\O$.

\begin{theorem}\label{Th:SHSlow}
Let, $\O\subset \R^d$ be a $\mathscr C^2$ domain. Let $n \in \mathscr C^1(\R^d)$.  Then:
\begin{enumerate}
\item \underline{Lack of controllability for large inradii:} There exists $\rho^*=\rho^*(n,f)>0$ such that if $\rho_\O>\rho^*$, then \eqref{Eq:AdvectionControlSlow} is not controllable to 0 in (in)finite time in the sense of Definition \ref{De:Contr}: there exist initial data $p_0$ such that, for any control $u$ satisfying the constraints \eqref{Eq:ConstraintControl}, the solution $p$ of \eqref{Eq:AdvectionControlSlow} does not converge to 0 as $t\to \infty$.
\item\underline{Controllability for large Dirichlet eigenvalue and small spatial variations:} If $\lambda_1^D(\O)>||f'||_{L^\infty}$, there exists $\e_*=\e_*(n,f,\O)$ such that, when $\e\leq \e_*$, the Equation \eqref{Eq:AdvectionControlSlow} is controllable to 0 and 1 in infinite time f  and to $\theta$ in finite time in the sense of Definition \ref{De:Contr}.
\end{enumerate}
\end{theorem}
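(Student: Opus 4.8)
The plan is to treat the two items separately, since they rely on very different ideas. For item (1), lack of controllability for large inradius, the strategy is the classical barrier argument: I would construct a nontrivial stationary solution $\bar p$ of the controlled equation with Dirichlet boundary data $\bar p = 0$ on $\partial\O$, which lies strictly between $0$ and $\theta$ on a large ball. Concretely, inside a ball $\mathbb B(x_0,\rho_\O)\subset\O$, I would use the sub/supersolution method: the constant $\theta/2$ (say) is a supersolution of $-\Delta p - \e\langle\n n,\n p\rangle = f(p)$ once we check the sign of $f$, and a suitably rescaled first Dirichlet eigenfunction of the operator $-\Delta - \e\langle\n n,\n\cdot\rangle$ on $\mathbb B(x_0,\rho_\O)$, with small amplitude, is a subsolution — this works provided $\rho_\O$ is large enough that the relevant eigenvalue drops below $f'(0)$ (in absolute value), which is where the threshold $\rho^*=\rho^*(n,f)$ comes from. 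One then extends $\bar p$ by $0$ to all of $\O$; by the parabolic comparison principle, any trajectory starting above $\bar p$ (e.g. $p_0 \equiv \theta/2$, or any $p_0$ dominating $\bar p$) stays above $\bar p$ for all time regardless of the boundary control $u\in[0,1]$, hence cannot converge to $0$. The drift term $\e\langle\n n,\n p\rangle$ is harmless here because $n\in\mathscr C^1$ so the coefficients are bounded and the maximum principle and eigenvalue theory go through verbatim; the only care needed is to make the eigenvalue estimate uniform, but since we only need existence of a single bad ball this is routine.

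For item (2), the positive controllability result, the plan is the domain-perturbation/staircase approach announced in the introduction. The key observation is that at $\e=0$ the equation is the homogeneous bistable equation $\partial_t p - \Delta p = f(p)$, and the condition $\lambda_1^D(\O) > \|f'\|_{L^\infty}$ guarantees (by a standard sub/supersolution plus uniqueness argument on $W^{1,2}_0$) that the only steady state with boundary data $0$ is $p\equiv 0$, the only one with boundary data $1$ is $p\equiv 1$, and similarly there is a globally attracting structure allowing the staircase method of \cite{PighinZuazua,CoronTrelat}: one builds a continuous path of steady states of the boundary-value problem interpolating between $0$, $\theta$ and $1$ with boundary data moving in $[0,1]$, and follows it quasi-statically. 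The task is then to show this whole construction is stable under the perturbation $-\Delta \rightsquigarrow -\Delta - \e\langle\n n,\n\cdot\rangle$ for $\e$ small. I would phrase the steady-state problem as $F(p,\e)=0$ in an appropriate Hölder or Sobolev space, with $F$ smooth in $(p,\e)$, observe that at each point of the $\e=0$ path the linearized operator $-\Delta - f'(p) - $ (lower order) is invertible on $W^{1,2}_0(\O)$ precisely because $\lambda_1^D(\O) > \|f'\|_{L^\infty} \geq f'(p)$ pointwise, and apply the implicit function theorem to deform the entire path. Uniformity of the invertibility bound along the compact path yields a single $\e_* = \e_*(n,f,\O)$ that works simultaneously.

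The main obstacle I anticipate is item (2): not the perturbation itself, but checking that the \emph{controllability machinery}, not merely the steady states, survives. The staircase method requires more than a path of steady states — it needs each steady state along the path to be (exponentially) stable for the parabolic dynamics, so that one can wait at each step for the trajectory to relax before nudging the boundary datum. The condition $\lambda_1^D(\O) > \|f'\|_{L^\infty}$ is exactly designed to give a uniform spectral gap for the linearizations, hence uniform exponential stability, and this gap is stable under the $O(\e)$ drift perturbation by standard spectral perturbation theory; but making this quantitative and uniform along the path, and handling the finite-time controllability to $\theta$ (which uses that $\theta$ is stabilizable from both sides together with a local exact-controllability argument near $\theta$), is the delicate part. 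A secondary technical point is regularity: one must work in a function space where both the nonlinear solution map and the boundary-data-to-steady-state correspondence are smooth, so I would fix $\mathscr C^2$ regularity of $\O$ (as assumed) and use parabolic/elliptic Schauder estimates throughout. Everything else — the comparison principles, the eigenvalue monotonicity in the domain, the construction of the reference path at $\e=0$ — is either classical or quotable from \cite{PoucholTrelatZuazua,BRZ}.
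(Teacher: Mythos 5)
The barrier construction you propose for item (1) does not work for a bistable nonlinearity, and this is a genuine gap. Since $f(s)<0$ for every $s\in(0;\theta)$ and $f'(0)<0$, a small multiple $\delta\phi_1$ of the first Dirichlet eigenfunction of $-\Delta-\e\langle \n n,\n\cdot\rangle$ on $\mathbb B(x_0,\rho_\O)$ is never a subsolution: the required inequality is $\lambda_1\,\delta\phi_1\leq f(\delta\phi_1)$, whose left-hand side is positive (the first Dirichlet eigenvalue of this operator on a bounded domain is always positive; it decays to $0$ as $\rho_\O\to\infty$ but never drops below $f'(0)<0$) while the right-hand side is $\approx f'(0)\delta\phi_1<0$. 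That trick is specific to monostable nonlinearities. Relatedly, no nontrivial steady state with zero boundary data can lie strictly below $\theta$: at an interior maximum $\n\bar p=0$ and $-\Delta\bar p\geq 0$, so $f(\bar p_{\max})\geq 0$ and hence $\bar p_{\max}\geq\theta$. The paper's argument (Lemma \ref{Le:NonTrivial}, proved in the Appendix) is instead variational: writing the steady equation in divergence form $-\n\cdot(e^{\e n}\n p)=e^{\e n}f(p)$, one checks that the energy $\frac12\int_\O e^{\e n}|\n p|^2-\int_\O e^{\e n}F(p)$ is negative at a plateau-type test function equal to $1$ on most of the inscribed ball once $\rho_\O$ is large; this is exactly where the standing hypothesis $\int_0^1 f>0$ (i.e.\ $F(1)>0$) enters --- a hypothesis your argument never invokes and without which the existence of the barrier can fail. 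Once the nontrivial minimizer exists, the comparison-principle conclusion you draw (the content of Claim \ref{Cl:Inter}) is correct.

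For item (2) your plan is essentially the paper's: uniqueness of the steady states with boundary data $0$ and $1$ for $\e$ small plus static controls gives the infinite-time targets, and the staircase method applied to an implicit-function-theorem perturbation of the $\e=0$ path gives $\theta$. One real difference: you linearize on $\O$ itself, where $\lambda_1^D(\O)>\|f'\|_{L^\infty}$ makes $-\Delta-f'(p_{0,s})$ uniformly invertible along the path; the paper perturbs the radial steady states on the larger ball $\mathbb B(0;R_\O)$, where $0$ may be an eigenvalue of the linearization, and removes these resonances by slightly enlarging the ball and using eigenvalue monotonicity. Your route, carried out on $\O$ with the restricted boundary data, would avoid that complication and is a legitimate simplification (and, as in the paper, it suffices to perturb finitely many points of the path rather than the whole path). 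Two caveats: the staircase method does not require exponential stability of the intermediate steady states --- what it needs is local exact controllability between nearby steady states (Proposition \ref{Pr:ControlExact}) together with the estimate \eqref{Eq:EstimControl} to keep $u$ within $[0,1]$, which also requires the strict admissibility $0<p_s<1$ of the path; and you should make explicit the preliminary step in which the static control $u\equiv 0$ steers an arbitrary initial datum into a small neighbourhood of $0$ before the staircase starts.
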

We note that controllability to 0 or 1 can not hold in finite time, as it would violate the comparison principle, see \cite{BRZ}.
To prove this theorem, we have to very finely adapt, using perturbative arguments, the staircase method of \cite{CoronTrelat}.
\paragraph{(Lack of) controllability for rapidly varying total population size: (un)blocking phenomenons}
The previous result, however general, is proved using a very implicit method that does not enable us to give explicit bounds on the perturbation size $\e$.

As mentioned, the lack of controllability occurs when barriers appear. For instance, if a non-trivial solution to 
\begin{equation*}
\left\{\begin{array}{ll}
-\Delta \p_0-2\langle\frac{ \n N}N, \n \p_0\rangle=f(\p_0)&\text{ in }\O\,,
\\\p_0=0&\text{ on }\partial \O,
\end{array}\right.
\end{equation*}
exists, then it must reach its maximum above $\theta$ (this follows from the optimality conditions for maximisers of the function) and thus, from the maximum principle, it is not possible to drive an initial datum $p_0\geq \p_0$ to 0 with constrained controls. This kind of counter-examples appear when the drift is absent, see \cite{BRZ,PoucholTrelatZuazua}.  They are usually constructed by means of sub and super solutions of the equation. What is more surprising however is that adding a drift may lead to the existence of non-trivial solutions to 
\begin{equation*}
\left\{\begin{array}{ll}
-\Delta \p_1-2\langle\frac{ \n N}N, \n \p_1\rangle=f(\p_1)&\text{ in }\O\,,
\\\p_1=1&\text{ on }\partial \O,
\end{array}\right.
\end{equation*}
which never happens when no drift is present. In that case, driving the population from an initial datum $p_0\leq \p_1$ to $z_1$ is impossible. 

In this paragraph we give \color{black}examples of assumptions on drifts $N$ such that \color{black} the equation is not controllable to either 0, $\theta$ or $1$ in a fixed ball $\mathbb B(0;R)$, whenever the drift's intensity is too large or, conversely, \color{black} such \color{black} that the equation is always controllable regardless of $R$ when the drift's intensity is large enough. \color{black} Of course, the assumptions we make are only sufficient to ensure (non-)controllability, and not necessary. \color{black}

To formalise what we mean by "intensity of the drift", let us then consider, for a fixed radius $R>0$ and a fixed real parameter $\sigma>0$, the equation 
\begin{equation}
\label{EqSigma}\begin{cases}\partial_tp-\Delta p-\frac2\sigma \left\langle\frac{\nabla N}N,\nabla p\right\rangle=f(p) \text{ in }\R_+\times\mathbb B(0;R)\,,\\ p(t,\cdot)=u(t,\cdot)\in \R_+\times \color{black}\partial \B(0;R)\,, \\0\leq u\leq 1\text{ in }\R_+\times\color{black} \partial \B(0;R),\\ p(0,\cdot)=p_0\,, 0\leq p_0\leq 1\text{ in }\color{black}\B(0;R).\end{cases}.\end{equation}
The parameter $\sigma$ quantifies the drift's intensity.
\paragraph{Blocking phenomenons for certain classes of drifts}
We introduce the following assumptions on the drift $N$: the first one reads
\begin{equation}\tag{$\bold{T1}$}\label{AssumptionDrift}\exists C>0\,, \frac{\partial_r N}{N}\leq -Cr\end{equation}
while the second writes
\begin{multline}\tag{$\bold{T2}$}\label{AssumptionDrift2}  N\in \mathscr C^1(\O)\,, \exists c_0,c_1>0\,, \forall r \in [0;R]\,,\\ \forall \theta_1,\dots,\theta_{d-1}\in [0;2\pi]\,,e^{-c_0\frac{r^2}2}\leq N(r,\theta_1,\dots,\theta_{d-1})\leq e^{-c_1\frac{r^2}2} .\end{multline}

Our main result is the following Theorem:
\begin{theorem}\label{NewTheo}
Let $R>0$ and $\O:=\mathbb B(0;R)$.\begin{enumerate}\item \underline{Lack of controllability to 1:} Assume that $N$ is a $\mathscr C^1$ function satisfying \eqref{AssumptionDrift}. There exists $\sigma_N>0$ such that, for any $\sigma\in(0;\sigma_N)$, equation \eqref{EqSigma} is not controllable to 1 in $\Omega$.
\item\underline{Lack of controllability to 0:} Assume that $N$ is a $\mathscr C^1$ function satisfying \eqref{AssumptionDrift2}. There exists $\sigma_N'>0$ such that, for any $\sigma\in(0;\sigma_N')$, equation \eqref{EqSigma} is not controllable to 0 in $\Omega$.
\item[]\underline{Corollary}: If $N$ is $\mathscr C^1$ and satisfies \eqref{AssumptionDrift}-\eqref{AssumptionDrift2}, there exists $\underline \sigma>0$ such that, for any $\sigma \in (0;\underline \sigma)$, equation \eqref{EqSigma} is not controllable to either 0 or 1 in $\Omega$.
\end{enumerate}

\end{theorem}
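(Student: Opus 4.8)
The statement to prove is \Cref{NewTheo}, which consists of two non-controllability results (to $1$ under \eqref{AssumptionDrift}, and to $0$ under \eqref{AssumptionDrift2}) together with their corollary. The unifying strategy is the barrier argument sketched just before the theorem: to rule out controllability to $a\in\{0,1\}$, it suffices to exhibit a \emph{non-trivial} stationary solution $\varphi_a$ of
\begin{equation*}
-\Delta \varphi_a - \tfrac2\sigma\Big\langle \tfrac{\nabla N}{N},\nabla\varphi_a\Big\rangle = f(\varphi_a) \text{ in }\mathbb B(0;R),\qquad \varphi_a = a \text{ on }\partial\mathbb B(0;R),
\end{equation*}
which crosses the Allee threshold $\theta$ somewhere in $\Omega$ in the "wrong" direction (i.e.\ for $a=1$, $\varphi_1$ dips \emph{below} $\theta$ at some point; for $a=0$, $\varphi_0$ rises \emph{above} $\theta$). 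Then, by the parabolic comparison principle applied to well-chosen ordered initial data (e.g.\ $p_0 \leq \varphi_1$ for the target $1$, $p_0\geq \varphi_0$ for the target $0$) together with the boundary constraint $0\le u\le 1$, the solution of \eqref{EqSigma} stays trapped on one side of the barrier and cannot converge to the homogeneous target; this part is identical in spirit to \cite{BRZ,PoucholTrelatZuazua} and I would only state it, citing those works. The whole difficulty is therefore the \emph{existence} of these non-trivial radial stationary states for $\sigma$ small.

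For the lack of controllability to $1$, I would look for a radial solution $\varphi_1=\varphi_1(r)$ and exploit assumption \eqref{AssumptionDrift}, $\partial_r N/N\le -Cr$. Writing the equation in radial coordinates,
\begin{equation*}
-\varphi_1'' - \frac{d-1}{r}\varphi_1' - \frac2\sigma\,\frac{\partial_r N}{N}\,\varphi_1' = f(\varphi_1),
\end{equation*}
the drift term $-\frac2\sigma\frac{\partial_r N}{N}\varphi_1' \geq \frac{2C}{\sigma} r\,\varphi_1'$ is, after the sign of $\varphi_1'$ is understood, a strong \emph{confining} (inward-pushing) drift of size $\sim 1/\sigma$. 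The plan is to construct a subsolution that is forced below $\theta$ near the origin: since $f<0$ on $(0,\theta)$, a constant slightly below $\theta$ is a strict subsolution of the interior equation, and one glues it to the boundary value $1$. More robustly, I would freeze the drift coefficient and study the linearized/rescaled problem: rescaling $r = \sqrt\sigma\,\rho$ turns the operator into $-\Delta_\rho - 2C\langle \rho,\nabla_\rho\rangle + \dots$ on the ball of radius $R/\sqrt\sigma\to\infty$, i.e.\ an Ornstein–Uhlenbeck-type operator on a large ball; one then shows the solution with boundary datum $1$ is close, on compact sets, to the bounded solution of the limit problem on $\mathbb R^d$, which (because the zeroth-order reaction is dominated by the large drift) must take a value below $\theta$ at the origin. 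The cleanest implementation is probably via sub/supersolutions directly at fixed $\sigma$: use $\overline\varphi\equiv 1$ as supersolution and build an explicit radial subsolution $\underline\varphi$ with $\underline\varphi(0)<\theta$, $\underline\varphi(R)=1$, using the largeness of $1/\sigma$ to absorb $f$; the monotone iteration between $\underline\varphi$ and $\overline\varphi$ yields $\varphi_1$, and $\varphi_1(0)\le\underline\varphi(0)<\theta$ is the barrier property. The analogous step for target $0$ uses \eqref{AssumptionDrift2}: there $e^{-c_0 r^2/2}\le N\le e^{-c_1 r^2/2}$ gives two-sided control $-c_0 r \le \partial_r N/N\le -c_1 r$ on the logarithmic derivative, hence again a confining drift of order $1/\sigma$, and one builds a supersolution pinned to $0$ on $\partial\Omega$ that is forced \emph{above} $\theta$ at the center (using $f>0$ on $(\theta,1)$ and the drift to dominate), together with the trivial subsolution $\underline\varphi\equiv 0$.

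The main obstacle is making the sub/supersolution construction rigorous \emph{uniformly as the geometry degenerates under rescaling}: one needs a radial profile that (i) satisfies the correct differential inequality including the singular $\frac{d-1}{r}$ term at the origin, (ii) respects the ordering $0\le\underline\varphi\le\overline\varphi\le 1$ so the iteration stays in the physical range where $f$ is bistable, and (iii) genuinely crosses $\theta$ with a margin independent of $\sigma$ in the limit. I expect the honest route is: (a) prove an a priori estimate showing any solution with the given boundary datum satisfies $0\le\varphi\le 1$ by the maximum principle (using $f(0)=f(1)=0$ and signs of $f$); (b) show via a test-function / integration argument against the natural weight $N^{-2/\sigma}$ — which makes the operator self-adjoint, $-N^{-2/\sigma}\nabla\cdot(N^{2/\sigma}\nabla\varphi)=f(\varphi)$ — that if $\varphi_1$ stayed $\ge\theta$ everywhere then $\int f(\varphi_1) N^{2/\sigma}\,dx \ge 0$ forces $\varphi_1\equiv 1$, contradicting a quantitative lower bound on how much mass the weight $N^{2/\sigma}$ (which concentrates where $N$ is large, i.e.\ near the origin since $N$ is decreasing) puts on the region where a genuine solution must be strictly less than $1$; quantifying this concentration as $\sigma\to0$ via Laplace's method is where the condition "$\sigma$ small enough" enters. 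Finally, the Corollary is immediate: if $N$ satisfies both \eqref{AssumptionDrift} and \eqref{AssumptionDrift2}, set $\underline\sigma=\min(\sigma_N,\sigma_N')$ and invoke parts (1) and (2) simultaneously, so that for $\sigma<\underline\sigma$ neither target $0$ nor target $1$ is reachable.
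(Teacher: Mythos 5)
Your overall strategy (reduce non-controllability to the existence of non-trivial steady states with boundary value $0$ or $1$, then invoke the parabolic comparison principle) is the right one and matches the paper. The genuine gap is in the \emph{existence} step for the boundary value $1$, which is precisely the hard part of the theorem. Your plan is to take $\overline\varphi\equiv 1$ as supersolution, build a subsolution $\underline\varphi$ with $\underline\varphi(0)<\theta$, and conclude that the monotone iteration produces $\varphi_1$ with $\varphi_1(0)\le\underline\varphi(0)<\theta$. This is backwards: the sub/supersolution method yields a solution satisfying $\underline\varphi\le\varphi_1\le\overline\varphi$, so you only get $\varphi_1(0)\ge\underline\varphi(0)$, and since $z_1\equiv 1$ always lies in the order interval $[\underline\varphi,1]$, the method may simply return the trivial solution. (Incidentally, a constant $c$ slightly below $\theta$ satisfies $0>f(c)$ and is therefore a strict \emph{super}solution, not a subsolution.) Your fallback via the weighted integration/Laplace argument is circular: it shows that \emph{if} a non-trivial solution exists it must dip below $\theta$, but it does not produce one; and the variational route is structurally closed off here because $z_1\equiv 1$ is the strict global minimiser of $\mathcal E_{N,\sigma}$ on the affine space with boundary datum $1$ (since $F\le F(1)$ on $[0,1]$). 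This is exactly why the paper devotes most of Section \ref{Proof2} to a shooting/phase-plane analysis of the radial ODE \eqref{ODE1}: one launches the solution from $p(0)=\alpha<\theta$, $p'(0)=0$, shows it increases and that the derivative eventually blows up (so the profile reaches $1$ at a finite radius $R(\sigma,1)$), proves $R_\sigma^*\to 0$ as $\sigma\to 0$, and only \emph{then} uses the Gaussian profile as a non-constant supersolution to pass, via Assumption \eqref{AssumptionDrift}, to general drifts (Lemma \ref{NewLem3}). Your proposal contains no workable substitute for this construction.

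For the boundary value $0$ your sub/supersolution setup has the same defect (a supersolution above $\theta$ at the centre together with the subsolution $\equiv 0$ does not exclude the solution $z_0\equiv 0$). Here, however, the fix is exactly the variational/Laplace idea you gesture at but attach to the wrong target: under \eqref{AssumptionDrift2} one writes the equation as $-\nabla\cdot(N^{2/\sigma}\nabla p)=N^{2/\sigma}f(p)$, takes a radial bump $\eta$ equal to $1$ near the origin and $0$ near $\partial\Omega$, and shows via Watson's lemma that the potential term of $\mathcal E_{N,\sigma}(\eta)$ is of order $F(1)\,\sigma^{d/2}$ while the gradient term is $O(e^{-c_1\delta^2/\sigma})$, so the infimum of the energy over $W^{1,2}_0(\Omega)$ is negative for $\sigma$ small and the minimiser is a non-trivial admissible solution. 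If you reorganise the proof so that target $0$ is handled variationally and target $1$ by the shooting argument (or another genuine existence mechanism), the comparison-principle conclusion and the corollary ($\underline\sigma=\min(\sigma_N,\sigma_N')$) go through as you state.
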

\begin{remark}

\begin{enumerate}

\item It should be noted that, since the lack of controllability is proved using the existence of non-trivial solutions of the equation with homogeneous boundary values 0 or 1, the controllability to $\theta$ can not hold for arbitrary initial conditions.
\item

Sharp changes  in the total population size have been known, since \cite{NadinStrugarekVauchelet}, to provoke blocking phenomenons for the traveling-waves solutions of the bistable equation, and our results seems to lead to the same kind of interpretation: when a sudden change occurs in $N$, it is hopeless for a population coming from the boundary to settle everywhere in the domain.
\item We will, for the sake of readability, split the proof of Theorem \ref{NewTheo} in two parts, one devoted to the existence of non-trivial steady-states with boundary value 1, one devoted to the existence of such non-trivial solutions with boundary value 0. 
\item The methods used to establish the existence of non-trivial solutions with boundary values 0 or 1 will however be very different: while the existence of a steady-state associated with homogeneous boundary value 0 relies on variational arguments, the existence of non-trivial steady-states associated with the homogeneous boundary value 1 will rest upon a very fine analysis of the phase plane portrait and will take up most of the sections of the proof. Such complexity in the proof is required by the fact that the steady state $z_1\equiv 1$ is a global minimiser of the energy functional on the space with homogeneous boundary conditions equal to 1.
\item This result seems to  indicate that the angular component of the drift has very little, if  any, influence on controllability.
 \end{enumerate}

\end{remark}
\color{black}

We illustrate the existence of non-trivial solutions in Figures \ref{Fi:4}, \ref{Fi:5}, \ref{Fi:6} and \ref{Fi:7} for the one-dimensional case, $\O=(-L;L)$. In this case, the drift under consideration is $N(r):=e^{-r^2},$ and the equations to which non-trivial solutions must be found are 
\begin{equation}\label{Eq:Contre0}
\left\{\begin{array}{ll}
-\frac{\partial^2p}{\partial x^2}+\frac{2x}\sigma \frac{\partial p}{\partial x}=f(p)\text{ in }[-L,L]\,,& 
\\p(\pm L)=0,&
\\0\leq p\leq 1,&
\end{array}\right.
\end{equation}
and \begin{equation}\label{Eq:Contre1}
\left\{\begin{array}{ll}
-\frac{\partial^2p}{\partial x^2}+\frac{2x}\sigma \frac{\partial p}{\partial x}=f(p)\text{ in }[-L,L]\,,& 
\\p(\pm L)=1,&
\\0\leq p\leq 1,&
\end{array}\right.
\end{equation}

We will study the energy $$\mathscr E:(p,v)\mapsto \frac12v^2+F(p),$$
{where $F(p)=\int_0^pf(s)ds$.}

\begin{figure}[H]\begin{center}
 \includegraphics[scale=0.3]{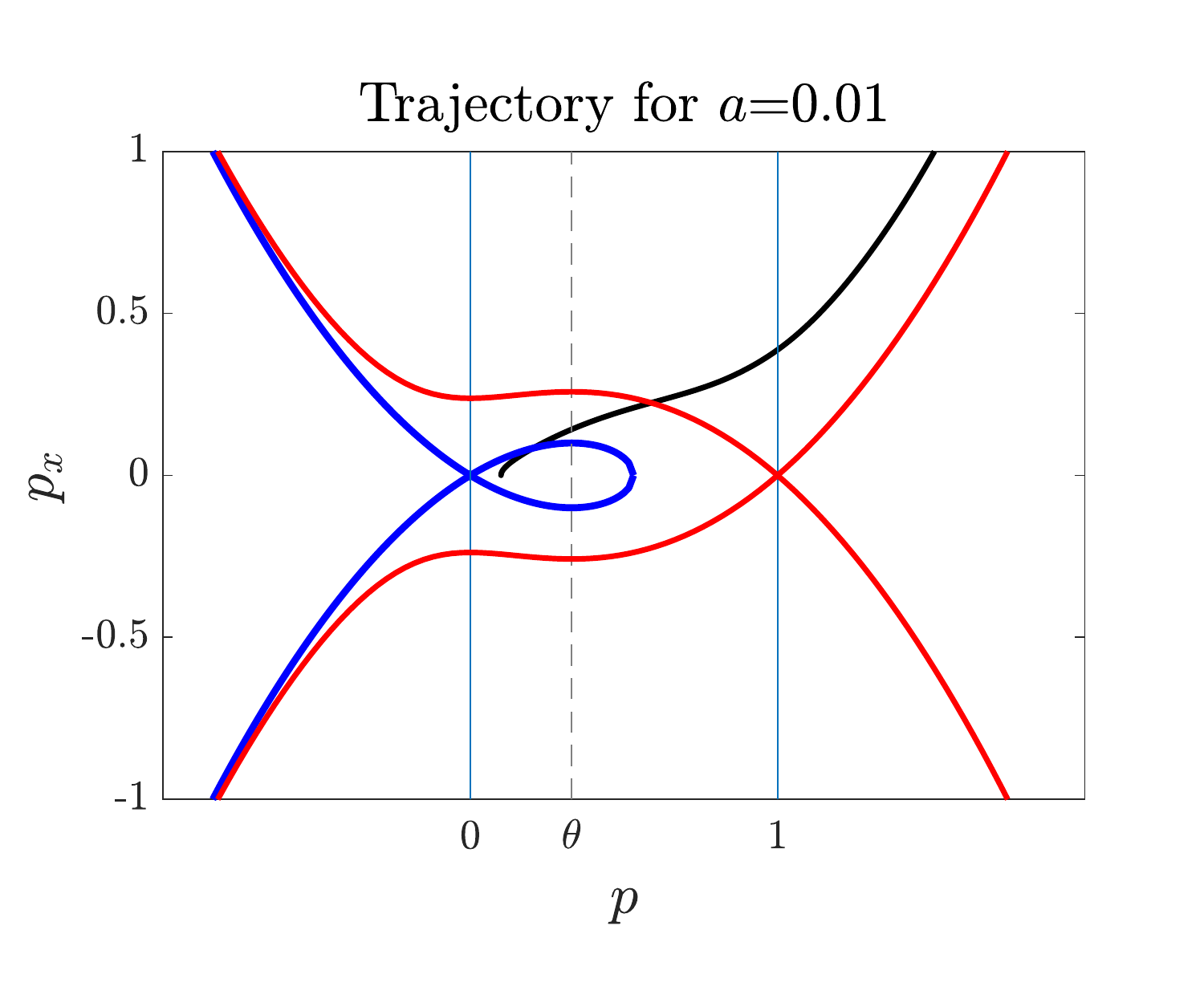}
  \includegraphics[scale=0.3]{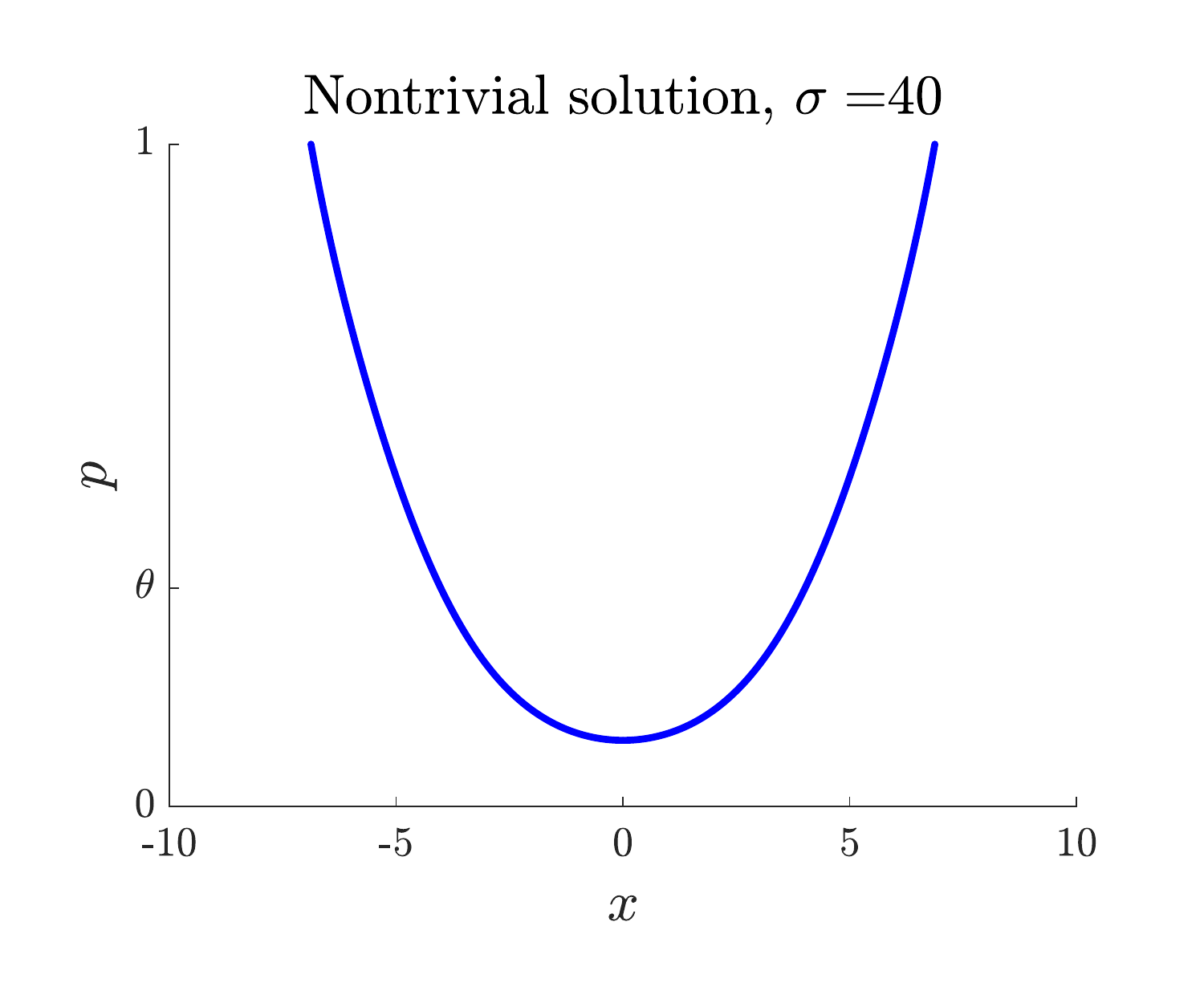}\end{center}
  \caption{$\sigma=40$ and $f(s)=s(1-s)(s-\theta)$, $\theta=0.33$.
  Phase portrait (Left):  the trajectory corresponding to the nontrivial solution is in black, the energy set $\{\mathscr E=F(1)\}$ in red, the energy set $\{\mathscr E=F(0)\}$ in blue. 
 Nontrivial solution of \eqref{Eq:Contre1} (Right).   }\label{Fi:4}
\end{figure}

\begin{figure}[H]\begin{center}
 \includegraphics[scale=0.3]{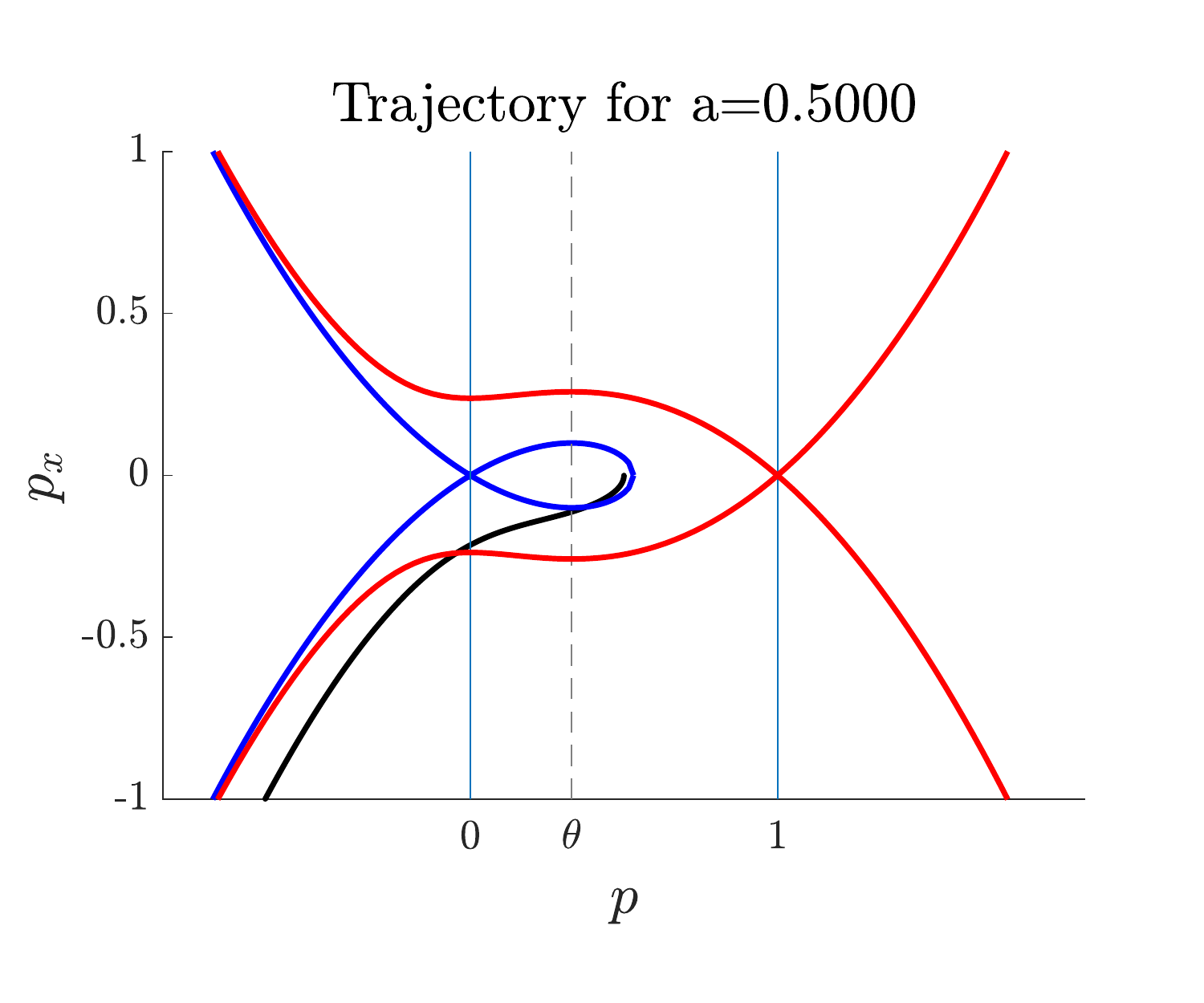}
  \includegraphics[scale=0.3]{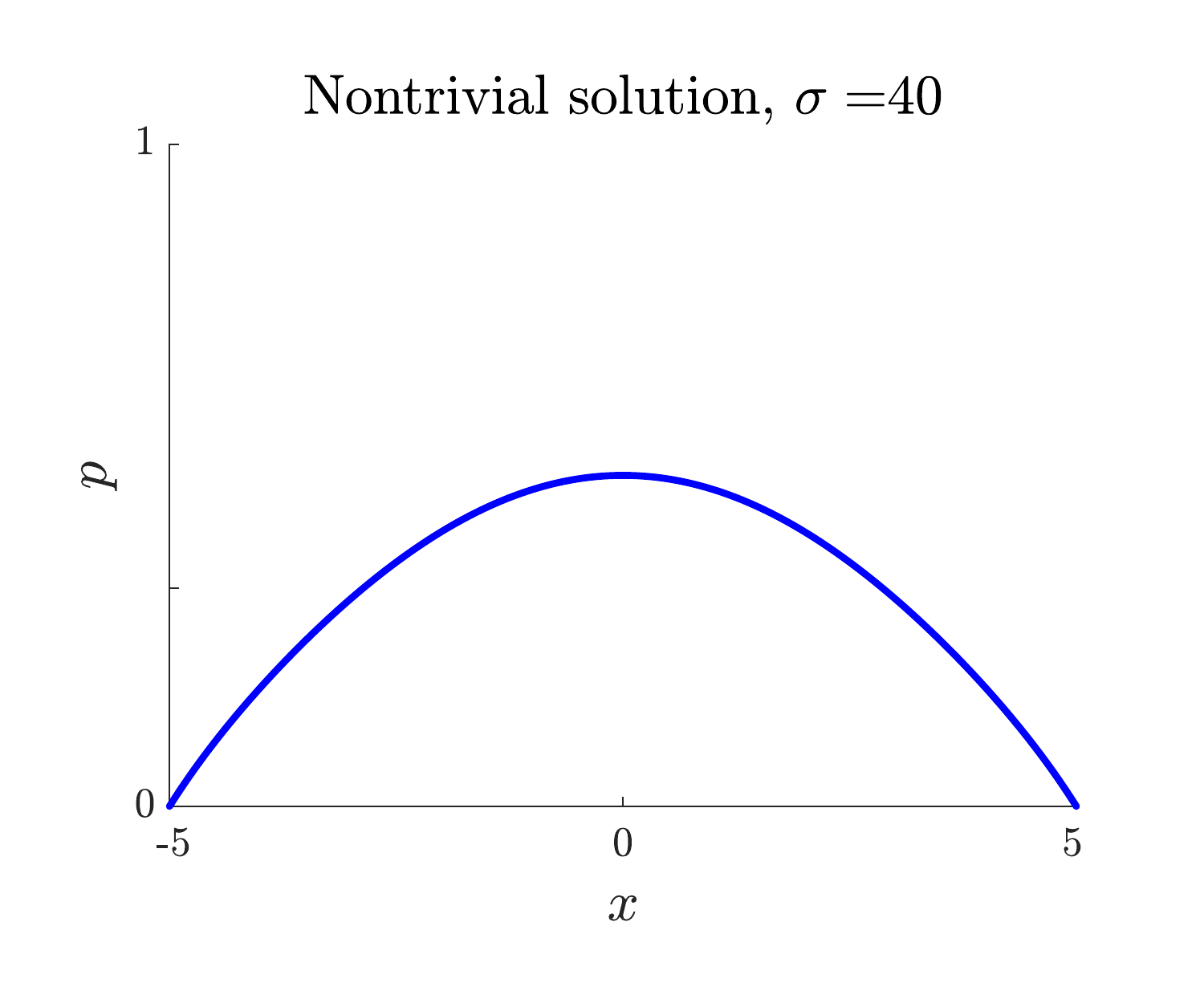}\end{center}
  \caption{Same class of parameters $\sigma,\theta,f$. Phase portrait (Left): the trajectory corresponding to the nontrivial solution is in black, the energy set $\{\mathscr E=F(1)\}$ in red, the energy set $\{\mathscr E=F(0)\}$ in blue.  Nontrivial solution  of \eqref{Eq:Contre0} (Right).
}\label{Fi:5}
  \end{figure}
  We also observe this "double-blocking" phenomenon (i.e the existence of non-trivial solutions to \eqref{Eq:Contre1} and \eqref{Eq:Contre0} in the same interval) numerically, when trying to control an initial datum to $\theta$:
  
\begin{figure}[H]\begin{center}
 \includegraphics[scale=0.3]{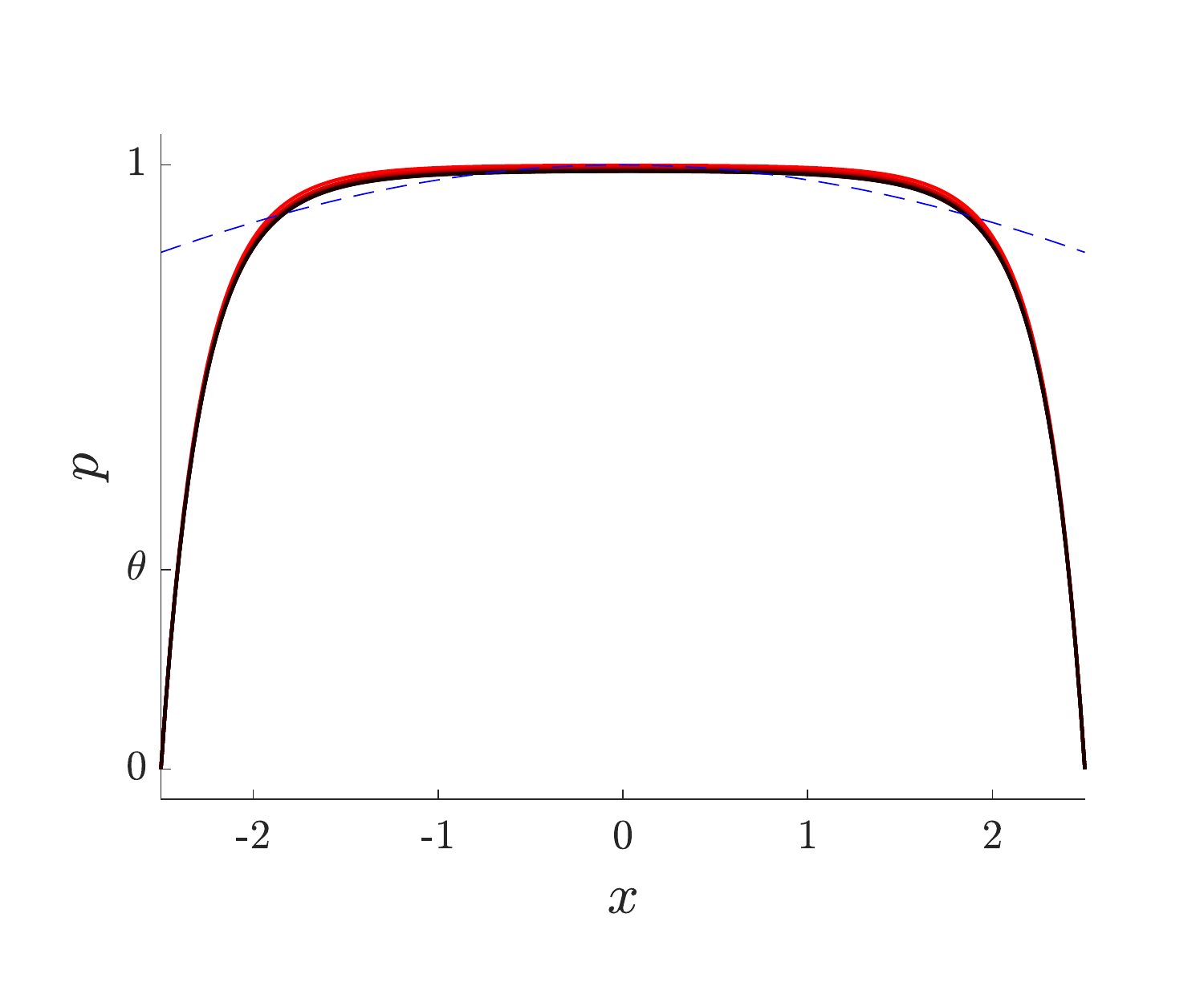}
  \includegraphics[scale=0.3]{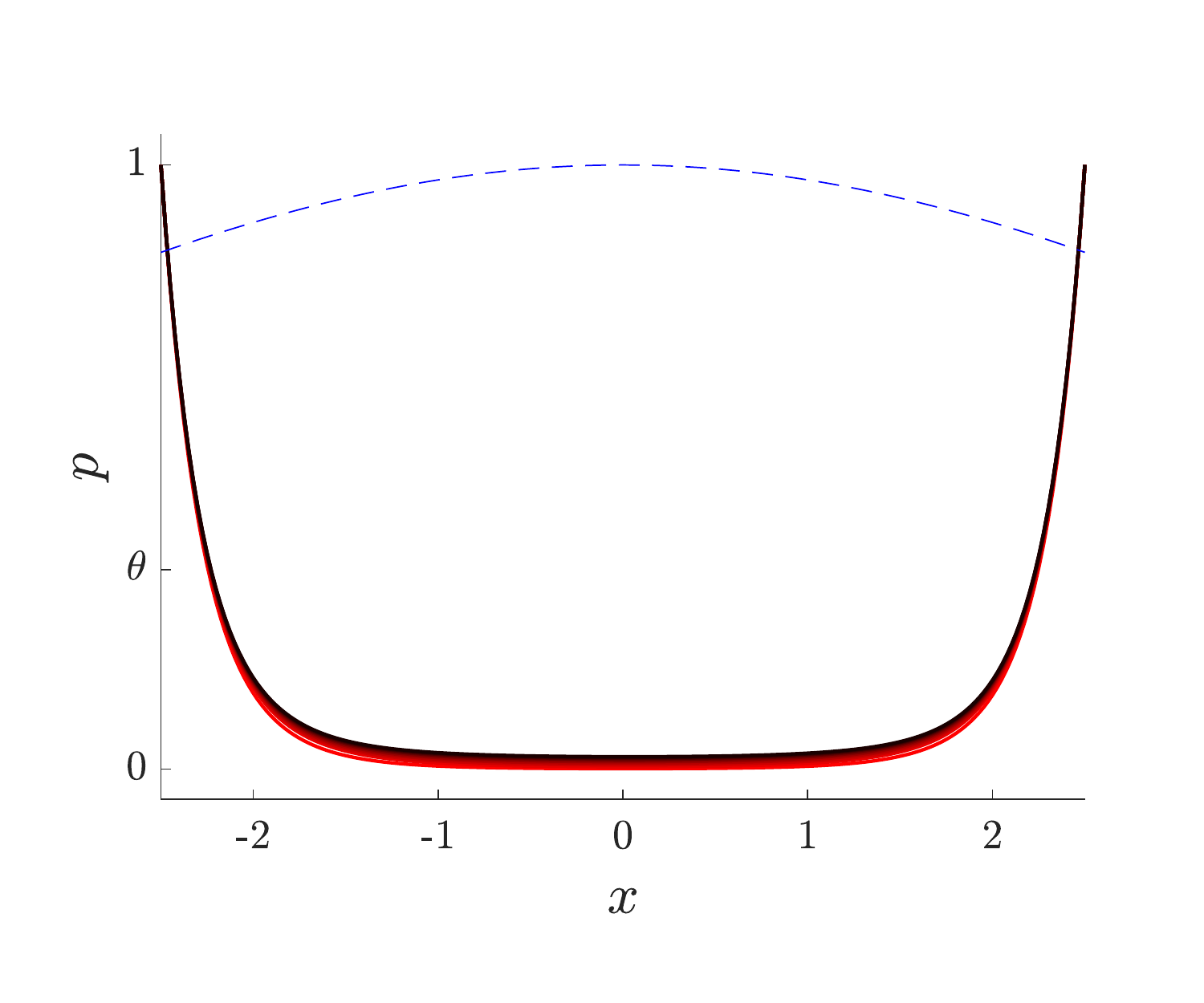}
\caption{The blue dashed line represents $N^{1/\sigma}=e^{-x^2/\sigma}$, for $\sigma=40$, 
  $L=2.5$, (Left) initial datum $p_0=1$, (Right) initial datum $p_0=0$. We try to control the initial conditions solutions to 0 (left) or 1 (right). Darker red represents a further instance of time and black represents the final time. We clearly observe the lack of controllability due to the presence of a barrier.}\label{Fi:6}\end{center}
\end{figure}
There can also be controllability from 0 to $\theta$, but not from $1$ to $\theta$ for some drifts, as shown, numerically, below:\begin{center}

\begin{figure}[H]\begin{center}
 \includegraphics[scale=0.3]{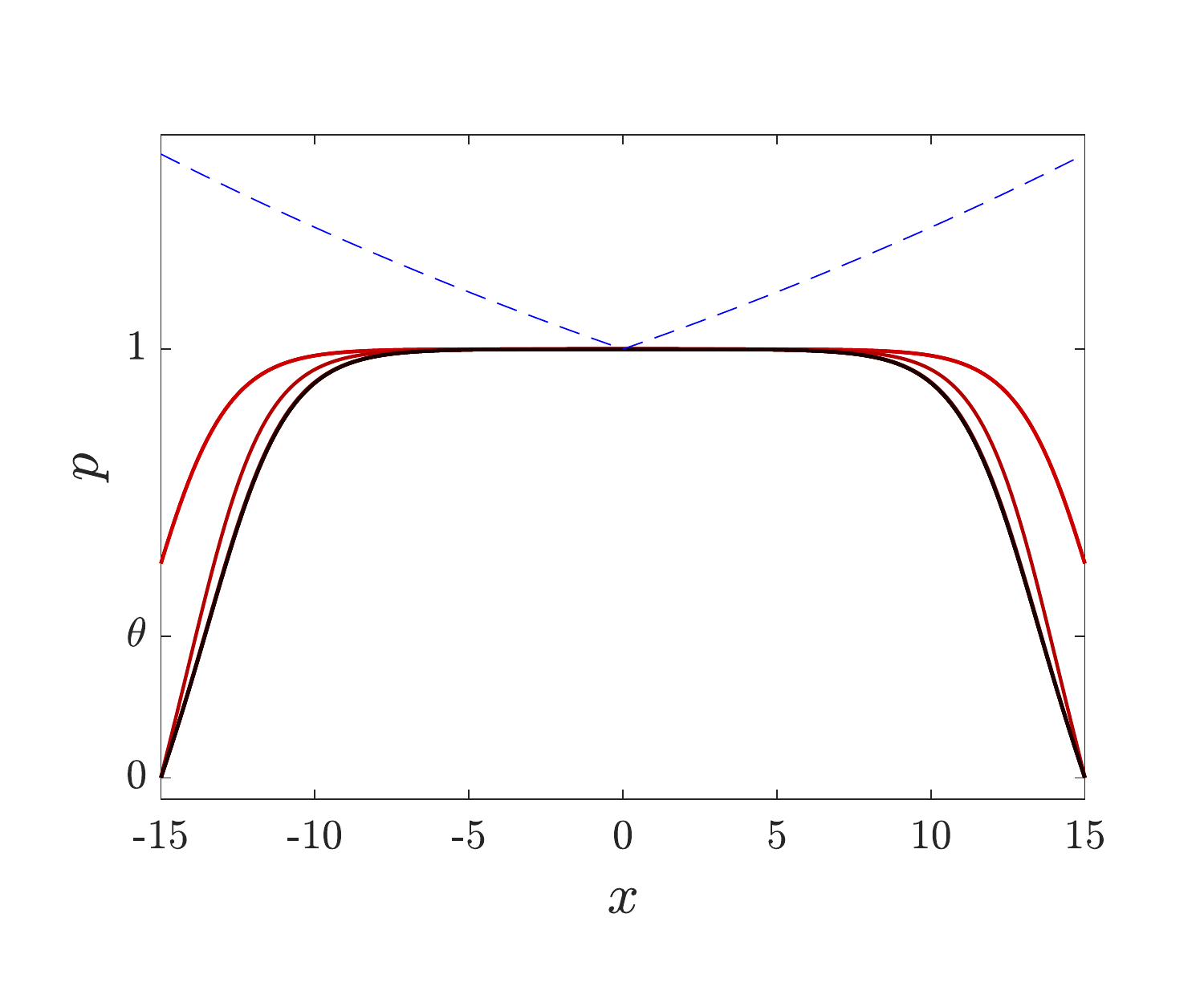}
  \includegraphics[scale=0.3]{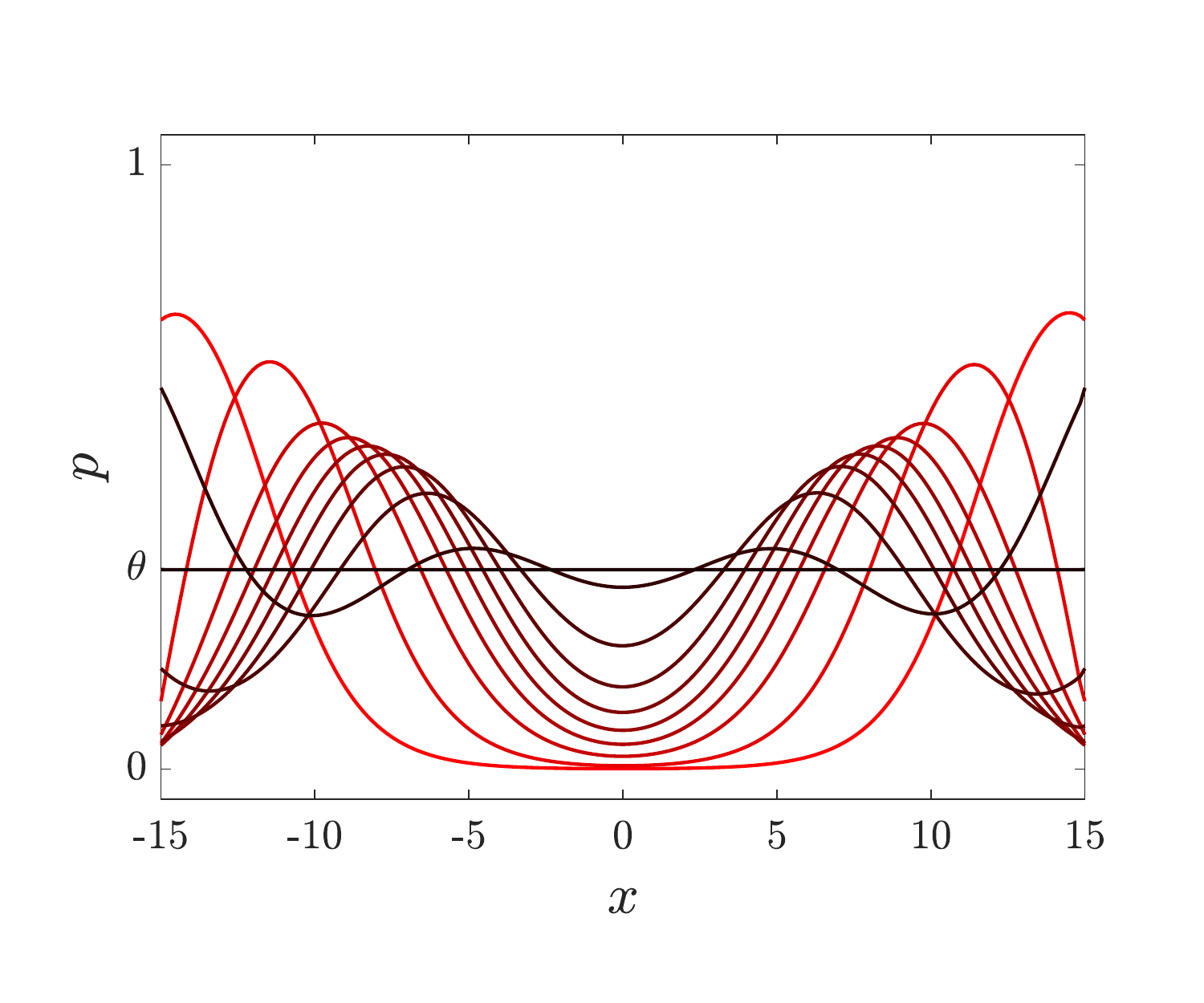}\end{center}
\caption{The blue dashed line represents $N^{1/\sigma}=e^{|x|/\sigma}$, $\sigma=40$, $T=150$, $L=15$. (Left) initial datum $p_0=1$, (Right) initial datum $p_0=0$. Darker red represents a further instance of time and black being the final time.}\label{Fi:7}
\end{figure}\end{center}
\color{black}
\paragraph{Unblocking phenomenons}
Assumptions \eqref{AssumptionDrift} and \eqref{AssumptionDrift2} essentially state that, when the drift is, loosely speaking, "pushing" towards the boundary intensely enough, barriers will appear and prevent controllability to 0,1 or $\theta$. We now give, for the sake of completeness, an example of a drift which is pushing "towards" the interior of the domain, and which helps controllability, in the sense that, if it is intense enough, all barriers will disappear. This last situation will be referred to as "unblocking phenomena". For the sake of readability, we once again prove our result in the case of a ball.
\begin{theorem}\label{Th:Fin}
Let $N(x):=e^{\frac{\Vert x\Vert^2}2}$. There exists $\sigma_+>0$ such that, for every  $\O:=\mathbb B(0;R)$ with $R>0$ and for any $0<\sigma\leq \sigma_+$, \eqref{EqSigma} is controllable to 0 and 1 in infinite time   and to $\theta$ in finite time.
\end{theorem}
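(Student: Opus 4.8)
The plan is to reduce the controllability statement to the \emph{nonexistence of non-trivial steady states} (barriers) and to exploit the fact that, for this particular drift, the relevant weighted Dirichlet eigenvalue is enormous when $\sigma$ is small, \emph{uniformly in $R$}. With $N(x)=e^{\|x\|^2/2}$ one has $\nabla N/N=x$, so \eqref{EqSigma} reads $\partial_t p-\Delta p-\tfrac2\sigma\langle x,\nabla p\rangle=f(p)$, and the associated elliptic operator $L_\sigma\varphi:=-\Delta\varphi-\tfrac2\sigma\langle x,\nabla\varphi\rangle$ is self-adjoint for the weight $w_\sigma:=e^{\|x\|^2/\sigma}$, i.e. $L_\sigma\varphi=-w_\sigma^{-1}\operatorname{div}(w_\sigma\nabla\varphi)$. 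The first step is the uniform spectral lower bound
\[
\lambda_1(\sigma,R):=\inf_{\varphi\in W^{1,2}_0(\B(0;R))\setminus\{0\}}\frac{\displaystyle\int_{\B(0;R)}w_\sigma|\nabla\varphi|^2}{\displaystyle\int_{\B(0;R)}w_\sigma\,\varphi^2}\;\geq\;\frac d\sigma\qquad\text{for every }R>0 .
\]
I would prove this by the ground-state (Agmon) substitution $\varphi=w_\sigma^{-1/2}\psi=e^{-\|x\|^2/(2\sigma)}\psi$, which transforms the weighted Rayleigh quotient into the Schr\"odinger quotient $\big(\int|\nabla\psi|^2+V_\sigma\psi^2\big)/\int\psi^2$ with $V_\sigma=\tfrac{\|x\|^2}{\sigma^2}+\tfrac d\sigma\geq\tfrac d\sigma$, whence the bound; crucially it does not see $R$. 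Consequently, fixing $\sigma_+$ with $d/\sigma_+>\|f'\|_{L^\infty([0,1])}$, we get $\lambda_1(\sigma,R)>\|f'\|_{L^\infty([0,1])}$ for all $R>0$ and all $\sigma\in(0,\sigma_+]$.

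Next I would use this spectral gap to kill all barriers and, more, to get a unique, stable steady state for every constant boundary datum. If $\varphi\in[0,1]$ solves $L_\sigma\varphi=f(\varphi)$ in $\B(0;R)$ with $\varphi|_{\partial\B(0;R)}=0$, then, since $f(0)=0$, one may write $L_\sigma\varphi=c(x)\varphi$ with $|c|\leq\|f'\|_{L^\infty([0,1])}$; testing against $w_\sigma\varphi$ gives $\int w_\sigma|\nabla\varphi|^2=\int c\,w_\sigma\varphi^2\leq\|f'\|_{L^\infty}\int w_\sigma\varphi^2<\lambda_1(\sigma,R)\int w_\sigma\varphi^2$ unless $\varphi\equiv0$, contradicting the variational characterisation of $\lambda_1$; hence $\varphi\equiv0$. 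The same computation applied to $\xi:=1-\varphi$ (using $f(1)=0$) shows $\varphi\equiv1$ is the only solution with boundary datum $1$, and applied to the difference of two solutions it shows that for each constant $c\in[0,1]$ there is a \emph{unique} solution $\varphi_c\in[0,1]$ with $\varphi_c|_{\partial\B(0;R)}\equiv c$ (existence via the sub/supersolutions $0$ and $1$). Since the linearisation $-L_\sigma+f'(\varphi_c)$ has spectrum $\leq-(\lambda_1(\sigma,R)-\|f'\|_{L^\infty})<0$, each $\varphi_c$ is an exponentially stable equilibrium of \eqref{EqSigma}; in particular, in sharp contrast with the drift-free case, $z_\theta\equiv\theta$ is stable, and the energy $\varphi\mapsto\int_{\B(0;R)}w_\sigma\big(\tfrac12|\nabla\varphi|^2-F(\varphi)\big)$ is a strict Lyapunov functional on each affine space of fixed boundary value.

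With no barriers, the controllability statements follow from comparison plus a local-controllability step. For controllability to $0$ in infinite time, apply the constant control $u\equiv0$: the constant $1$ is a supersolution, so the solution from any $p_0\in[0,1]$ is squeezed between $0$ and the solution $\overline p$ with datum $1$, which is non-increasing in $t$ and, by parabolic regularity and uniqueness of steady states (or directly by the Lyapunov functional), converges uniformly on $\overline{\B(0;R)}$ to $\varphi_0\equiv0$; hence $p(t,\cdot)\to0$ in $\mathscr C^0$. Symmetrically $u\equiv1$ drives any $p_0$ to $\varphi_1\equiv1$ in infinite time (finite time is excluded by the comparison principle, as recalled in the text). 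For controllability to $\theta$ in finite time I would first run the constant control $u\equiv\theta$: the same squeezing between the monotone solutions with data $0$ and $1$ and boundary value $\theta$, both converging to $\varphi_\theta\equiv\theta$, plus parabolic smoothing, reaches at some time $T_1$ a state as close as desired to $z_\theta$ in a strong topology; then I invoke local exact controllability to the \emph{stable} steady state $z_\theta$, the linearised boundary-control system $\partial_t\xi=\Delta\xi+\tfrac2\sigma\langle x,\nabla\xi\rangle+f'(\theta)\xi$ in $\B(0;R)$, $\xi=v$ on $\partial\B(0;R)$, being null-controllable in any time, so that an inverse-function-theorem argument in the spirit of \cite{CoronTrelat,PighinZuazua} gives exact controllability to $z_\theta$ in finite time with a control $L^\infty$-close to $\theta$, hence valued in $[0,1]$ since $\theta\in(0,1)$. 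This last step is precisely the staircase method of \cite{BRZ}, here along the (trivial, constant) path of boundary data, legitimised by the uniform stability of the $\varphi_c$.

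The main obstacle is the \emph{uniformity in $R$} of the spectral bound: a bound $\lambda_1(\sigma,R)\gtrsim c(R)/\sigma^2$ is elementary, but to obtain a single threshold $\sigma_+$ working for all balls one needs a lower bound independent of $R$, which is exactly what the Agmon substitution delivers; this is the one place where the \emph{sign} of the drift matters — it pushes the profile of $p$ towards the interior, equivalently it makes the underlying killed diffusion exit $\B(0;R)$ fast — and it is what fails for the heterogeneities of Theorem \ref{NewTheo}. The remaining points — the comparison principle and monotone convergence in the presence of the drift, the regularity bookkeeping needed to upgrade $\mathscr C^0$-closeness to the norm required by the local controllability theorem, and the verification that the needed controls respect $0\leq u\leq1$ — are routine adaptations of \cite{BRZ,PoucholTrelatZuazua,CoronTrelat}.
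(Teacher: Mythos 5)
Your proposal is correct and follows essentially the same route as the paper: reduce the statement to uniqueness of the steady states with constant boundary data $0$, $1$, $\theta$ via the weighted Dirichlet eigenvalue $\inf\int_\O N^{\frac2\sigma}|\n\psi|^2\big/\int_\O N^{\frac2\sigma}\psi^2$, show it exceeds the Lipschitz constant of $f$ for small $\sigma$ \emph{uniformly in $R$}, and conclude with static controls together with the local exact controllability result for the target $\theta$. The only difference is how the uniform-in-$R$ spectral bound is obtained: the paper extends the infimum to $\R^d$, rescales to extract a factor $1/\sigma$, and cites Escobedo--Kavian for the positivity of $\lambda_1(\R^d,N)$, whereas your ground-state (Agmon) substitution gives the explicit bound $d/\sigma$ directly; both are valid.
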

\begin{remark}
The term "unblocking" is justified by the fact that, as noted already, when the drift is not present, some barriers may appear depending on the value of $R$ see \cite{PoucholTrelatZuazua,BRZ}. The lack of barriers allows us to control to 0, 1 or $\theta$ via the static strategy $u\equiv$0, 1 or $\theta$ in infinite time and, to control to $\theta$, we may apply a local exact controllability result (see Proposition \ref{Pr:ControlExact} below).
\end{remark}
\begin{remark}
As will be explained in the proof, the key point in Theorem \ref{Th:Fin} is that  the following inequality holds:
$$\lambda_1(\R^d,N):=\inf_{\psi\in W^{1,2}_0(\R^d)\,, \psi \neq 0}\frac{\int_{\R^d}N^2|\n \psi|^2}{\int_{\R^d}N^2\psi^2}>0.$$
As a consequence of \cite[Corollary 1.10]{EscobedoKavian}, it is thus possible to restate our result as follows: let $\Lambda\in \mathscr C^2(\R^d;\R_+)$ be such that 
$$\lim_{\Vert x\Vert\to \infty}\left(\Delta \Lambda+\frac12|\n \Lambda|^2\right)=+\infty$$ and define $N_\sigma:=e^{\Lambda\left(\frac\cdot\sigma\right)}$. Then, there exists $\sigma_\Lambda>0$ such that for any $0<\sigma\leq \sigma_\Lambda$ and any $R>0$, if $\O=\mathbb B(0;R)$, the equation
\begin{equation}
\begin{cases}\partial_tp-\Delta p-2 \left\langle \nabla p,\frac{\nabla N_\sigma}{N_\sigma}\right\rangle=f(p) \text{ in }\R_+\times\O\,,\\ p(t,\cdot)=u(t,\cdot)\in \R_+\times \partial \O\,, \\0\leq u\leq 1\text{ in }\R_+\times \partial \O,\\ p(0,\cdot)=p_0\,, 0\leq p_0\leq 1\text{ in }\O.\end{cases}.\end{equation} is controllable to 0, $\theta$ or 1.
\end{remark}

\color{black}


\paragraph{The case of radial drifts}

In the case where the total population size $N:\O\to \R_+^*$ can be extended into a radial function $N:\R^d\to \R_+^*$, we can give an explicit bound on the decay rate of $N$ to ensure the controllability of \eqref{Eq:AdvectionControl}.
In other words, when the total population size is the restriction to the domain $\O$ of a radial function, we can obtain controllability results.
\begin{theorem}\label{Th:Rate}
Let $\O$ be a bounded smooth domain in $\R^d$. Let $N\in \mathscr C^1(\R^d; \R_+^*)\,, \inf N>0$ and $N$ be radially symmetric. Let 
$$\lambda_1^D(\O,N):=\inf_{p\in W^{1,2}_0(\O)}\displaystyle\frac{\displaystyle\int_\O N^2|\n p|^2}{\displaystyle\int_\O N^2 p^2}$$ be the weighted eigenvalue associated with $N$.
If 
\begin{equation}\label{Eq:ConSpectral}||f'||_{L^\infty}\color{black}<\color{black} \lambda_1^D(\O,N)\end{equation}
 and if 
\begin{equation}\label{Eq:Rate}\tag{$A_1$}N'(r)\geq -\frac{d-1}{2r} N(r),\end{equation}
then Equation \eqref{Eq:AdvectionControl} is controllable to $0$ in infinite time and to $\theta$ in finite time.
\end{theorem}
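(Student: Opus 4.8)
The plan is to reduce the controllability question, as in the previous theorems, to a statement about the (non-)existence of non-trivial steady states and then to run the staircase method. More precisely, I would argue in three stages. First, I would observe that under the spectral condition \eqref{Eq:ConSpectral}, the constant states $z_0\equiv 0$ and $z_1\equiv 1$ are locally asymptotically stable for the uncontrolled equation, and that a local exact controllability result near $z_\theta$ (Proposition \ref{Pr:ControlExact}) is available; the only thing standing between us and controllability to $0$ (resp. $\theta$ in finite time) is the possible existence of a non-trivial solution $\p_0$ of
\begin{equation*}
\left\{\begin{array}{ll}
-\Delta \p_0-2\left\langle \frac{\n N}{N},\n \p_0\right\rangle=f(\p_0)&\text{ in }\O\,,\\
\p_0=0&\text{ on }\partial \O,
\end{array}\right.
\end{equation*}
acting as a barrier, i.e. a solution with $0\le \p_0\le 1$ and $\p_0\not\equiv 0$. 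So the heart of the matter is: under \eqref{Eq:Rate}, no such barrier exists, and then one concludes by building a path of steady states joining $z_0$ to $z_\theta$ exactly as in \cite{CoronTrelat,BRZ}.

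Second, and this is the main step, I would prove the non-existence of a non-trivial $\p_0$. The key algebraic observation is that the operator $p\mapsto -\Delta p-2\langle \n\ln N,\n p\rangle$ is, after multiplication by $N^2$, in divergence form: $-N^{-2}\operatorname{div}(N^2\n p)$. Hence testing the equation for $\p_0$ against $N^2\p_0$ and integrating by parts (the boundary term vanishes since $\p_0=0$ on $\partial\O$) gives
\begin{equation*}
\int_\O N^2|\n \p_0|^2=\int_\O N^2 f(\p_0)\p_0\le \|f'\|_{L^\infty}\int_\O N^2\p_0^2,
\end{equation*}
where I used $f(0)=0$ so that $f(s)s\le \|f'\|_{L^\infty}s^2$. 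By the definition of $\lambda_1^D(\O,N)$ this forces $\int_\O N^2|\n\p_0|^2\le \|f'\|_{L^\infty}\lambda_1^D(\O,N)^{-1}\int_\O N^2|\n\p_0|^2$, which under \eqref{Eq:ConSpectral} is only possible if $\p_0\equiv 0$. So in fact condition \eqref{Eq:ConSpectral} alone already rules out barriers with boundary value $0$. The role of the decay condition \eqref{Eq:Rate} is then to guarantee that $\lambda_1^D(\O,N)$ is actually large (equivalently, that the weighted eigenvalue does not degenerate) — or, more to the point, it is the condition that makes \eqref{Eq:ConSpectral} reachable/consistent and, crucially, excludes non-trivial steady states with boundary value $1$ as well: writing $q=1-\p_1$ one gets $-\Delta q-2\langle\n\ln N,\n q\rangle=-f(1-q)=:\tilde f(q)$ with $\tilde f'(0)=-f'(1)>0$, and here the naive energy estimate fails because $z_1$ is an energy minimiser, exactly as flagged in the remark after Theorem \ref{NewTheo}. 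For radial $N$ one can instead integrate the radial ODE $-(r^{d-1}N^2 p')'=r^{d-1}N^2 f(p)$ and use \eqref{Eq:Rate}, i.e. $(r^{d-1}N^2)'=r^{d-2}N^2(d-1+2rN'/N)\ge 0$, to deduce monotonicity/positivity properties of $r^{d-1}N^2p'$ that are incompatible with a non-trivial solution staying in $[0,1]$ with the wrong boundary data; this is where \eqref{Eq:Rate} is used in an essential way.

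Third, having excluded all barriers, I would invoke the staircase argument: connectedness of the set of steady states in a suitable topology, combined with the local controllability near each constant state and the stability of $z_0,z_1$, yields controllability to $0$ and $1$ in infinite time and to $\theta$ in finite time, following verbatim the scheme of \cite{BRZ} once the obstruction analysis above is in place. I expect the genuinely delicate point to be the exclusion of non-trivial steady states with boundary value $1$ under hypothesis \eqref{Eq:Rate}: the energy method is useless there, so one must exploit radiality to turn the PDE into an ODE and extract a monotonicity of the flux $r\mapsto r^{d-1}N(r)^2 p'(r)$ from the sign of $(r^{d-1}N^2)'$, which is precisely what \eqref{Eq:Rate} encodes; making this rigorous (in particular handling the behaviour at $r=0$ and the possibility of interior critical points of $p$) is where most of the work will go.
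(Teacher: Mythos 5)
Your reduction to the non-existence of barriers with boundary value $0$, and the weighted energy identity you use to exclude them (test against $N^2\p_0$, integrate the divergence-form operator $-\operatorname{div}(N^2\n \cdot)$ by parts, invoke the Rayleigh quotient for $\lambda_1^D(\O,N)$), is exactly the paper's first step and is correct: condition \eqref{Eq:ConSpectral} alone yields uniqueness of the trivial solution of the Dirichlet problem with datum $0$, hence the static control $u\equiv 0$ stabilises any initial datum towards $z_0$.

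The gap is in what you then do with hypothesis \eqref{Eq:Rate}. The theorem does not claim controllability to $1$, so there is nothing to prove about steady states with boundary value $1$; your second stage is aimed at the wrong target, and the "consistency of \eqref{Eq:ConSpectral}" reading of \eqref{Eq:Rate} is not what the hypothesis is for either. Where \eqref{Eq:Rate} is genuinely needed --- and where your proposal is silent, since you dismiss the path construction as verbatim from \cite{BRZ} --- is in the staircase step: in the presence of the drift one must still produce an \emph{admissible} path of steady states joining $z_0$ to $z_\theta$, i.e.\ radial solutions $p_s$ of $-(r^{d-1}N^2p_s')'=r^{d-1}N^2f(p_s)$ with $p_s(0)=s\theta$, $p_s'(0)=0$ on a ball $\B(0;R)\supset\O$, satisfying $0\leq p_s\leq 1$. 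Without this confinement the controls obtained from local exact controllability along the path need not satisfy $0\leq u\leq 1$ and the whole scheme collapses. The paper's admissibility argument is precisely where \eqref{Eq:Rate} enters: for $\mathscr E_1(r)=\frac12 p_s'(r)^2+F(p_s(r))$ one computes $\mathscr E_1'=\bigl(-\frac{d-1}{r}-2\frac{N'}{N}\bigr)(p_s')^2\leq 0$ under \eqref{Eq:Rate}, and since $\mathscr E_1(0)=F(s\theta)<0$ while touching $0$ (resp.\ $1$) would force $\mathscr E_1\geq 0$ (resp.\ $\mathscr E_1\geq F(1)>0$), the path stays inside $(0,1)$. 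Your observation that $(r^{d-1}N^2)'\geq 0$ under \eqref{Eq:Rate} is in the right spirit but applied to the wrong problem; redirect it to the admissibility (and $\mathscr C^0$-continuity in $s$, via elliptic regularity) of this path and the proof closes.
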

This Theorem is proved using energy methods and adapting the proofs of \cite{BRZ}.

\subsubsection{High-infection rate models}
For the infection-dependent model \eqref{Eq:GF}, i.e when $N$ assumes the form
$$N=N(p)>0,$$  the main equation of \eqref{Eq:AdvectionControl}
 reads
$$\frac{\partial p}{\partial t}-\Delta p -2\frac{N'}N(p)|\n p|^2=f(p).$$
Then the controllability properties of the equation are the same as in \cite{BRZ}:
\begin{proposition}\label{Th:GF}
Let $\O\subset \R^d$ be a smooth bounded set. 
When $N\in \mathscr C^1(\R)$ satisfies \eqref{Eq:GF}, there exists $\rho^*=\rho^*(f)$ such that, for any smooth bounded domain $\O$, 
\begin{enumerate}
\item \underline{Lack of controllability for large inradii:}  If $\rho_\O>\rho^*$, then \eqref{Eq:AdvectionControl} is not controllable to 0 in (in)finite time in the sense of Definition \ref{De:Contr}: there exist initial data $0\leq p_0\leq 1$ such that, for any control $u$ satisfying the constraints \eqref{Eq:ConstraintControl}, the solution $p$ of \eqref{Eq:AdvectionControl} does not converge to 0 as $t\to \infty$.
\item\underline{Controllability for large Dirichlet eigenvalue} If $\lambda_1^D(\O)>||f'||_{L^\infty}$, then \eqref{Eq:AdvectionControl} is controllable to 0, 1 in infinite time for any initial datum $0\leq p_0\leq 1$,  and to $\theta$ in finite time  for any initial datum $0\leq p_0\leq 1$.
\end{enumerate}
\end{proposition}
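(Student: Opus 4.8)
The plan is to reduce Proposition~\ref{Th:GF} to the already-understood homogeneous case by means of a change of unknown that removes the quadratic gradient term. Writing the main equation as
$$\frac{\partial p}{\partial t}-\Delta p-2\frac{N'(p)}{N(p)}|\n p|^2=f(p),$$
I would look for a smooth increasing bijection $\Phi:[0,1]\to[\Phi(0),\Phi(1)]$ such that $q:=\Phi(p)$ solves a semilinear equation with no drift. A direct computation gives $\partial_t q-\Delta q=\Phi'(p)\big(f(p)+ (\Phi''(p)/\Phi'(p)+2N'(p)/N(p))|\n p|^2\big)$, so the gradient term disappears precisely when $\Phi''/\Phi'=-2N'/N$, i.e. $\Phi'(p)=c\,N(p)^{-2}$. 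Since $N>0$ and $N\in\mathscr C^1$, the function $\Phi(p):=\int_0^p N(s)^{-2}\,ds$ is a well-defined $\mathscr C^2$ increasing diffeomorphism from $[0,1]$ onto $[0,\Phi(1)]$, and in the new variable $q$ one obtains $\partial_t q-\Delta q=\tilde f(q)$ with $\tilde f(q):=\Phi'(\Phi^{-1}(q))\,f(\Phi^{-1}(q))=N(\Phi^{-1}(q))^{-2} f(\Phi^{-1}(q))$. The boundary condition $p=u\in[0,1]$ becomes $q=\tilde u:=\Phi(u)\in[0,\Phi(1)]$, and the homogeneous steady states $p\equiv 0,\theta,1$ correspond to $q\equiv 0,\Phi(\theta),\Phi(1)$.

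Next I would check that $\tilde f$ is again a bistable nonlinearity (after the harmless affine rescaling $q\mapsto q/\Phi(1)$ to normalise the interval to $[0,1]$): it is $\mathscr C^\infty$ on $[0,\Phi(1)]$ because $\Phi^{-1}$ and $N$ are smooth and $N$ stays away from $0$; its only roots in $[0,\Phi(1)]$ are $0,\Phi(\theta),\Phi(1)$ since $N^{-2}>0$; and at each root $\tilde f'$ has the same sign as $f'$ at the corresponding root of $f$ because $\tilde f'(q_a)=\Phi'(a)^2 f'(a)$ for $a\in\{0,\theta,1\}$ (the extra terms vanish since $f(a)=0$). The sign condition $\int_0^{\Phi(1)}\tilde f>0$ need not survive the transformation, but this is immaterial: the results of \cite{BRZ} that we invoke are stated for general bistable nonlinearities and are insensitive to which of $0$ or $1$ sits at the bottom of the energy well (the $\theta$ case is symmetric, and controllability to $0$ versus to $1$ is governed by the same inradius/eigenvalue dichotomy). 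I would also note that $\Phi$ is bi-Lipschitz on $[0,1]$, so $q_0:=\Phi(p_0)$ ranges over all admissible data as $p_0$ does, and the $\mathscr C^0$-convergence $q(t,\cdot)\to q_a$ is equivalent to $p(t,\cdot)\to z_a$ since $\Phi^{-1}$ is continuous.

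Having established that $q$ solves a homogeneous constrained bistable boundary control problem on the same domain $\O$, I would simply quote the corresponding theorem of \cite{BRZ}: for item (1), large inradius forces the existence of a nontrivial steady state of $-\Delta q=\tilde f(q)$ with $q=0$ on $\partial\O$ whose maximum exceeds $\Phi(\theta)$, which via the comparison principle obstructs controllability to $0$ of initial data $q_0\geq$ that barrier — pulling this back through $\Phi^{-1}$ gives the claimed obstruction for $p$; for item (2), the condition $\lambda_1^D(\O)>\|f'\|_{L^\infty}$ must be shown to imply the analogous spectral condition $\lambda_1^D(\O)>\|\tilde f'\|_{L^\infty}$ needed to run the staircase method of \cite{CoronTrelat,PighinZuazua} for $q$, after which controllability of $q$ to $0,\Phi(1)$ in infinite time and to $\Phi(\theta)$ in finite time transfers back to $p$. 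The one genuine point requiring care — and the step I expect to be the main obstacle — is the bookkeeping in item (2): $\|\tilde f'\|_{L^\infty}$ involves the factor $N^{-2}$ and the derivative $\Phi'$, so the bound $\lambda_1^D(\O)>\|f'\|_{L^\infty}$ does not literally imply $\lambda_1^D(\O)>\|\tilde f'\|_{L^\infty}$; the fix is to run the staircase argument directly in the $p$-variable using the weighted eigenvalue $\lambda_1^D(\O,N)$ (which for $N=N(p)$ bounded above and below is comparable to $\lambda_1^D(\O)$) exactly as in the proof of Theorem~\ref{Th:Rate}, so that the linearised operators along the path of steady states are invertible. Modulo this uniform-invertibility check, the rest is a direct transcription of \cite{BRZ}.
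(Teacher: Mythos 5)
Your overall strategy is exactly the one the paper uses: transform the unknown by an increasing diffeomorphism so that the quadratic gradient term disappears, check that the new nonlinearity is bistable with the same roots structure, and quote the homogeneous result of \cite{BRZ}. However, there is a concrete computational error in the key step that makes your reduction fail as written. Computing carefully, for $q=\Phi(p)$ one has $\Delta q=\Phi'(p)\Delta p+\Phi''(p)|\n p|^2$, and substituting the equation $\partial_t p-\Delta p=f(p)+2\tfrac{N'(p)}{N(p)}|\n p|^2$ gives
\begin{equation*}
\partial_t q-\Delta q=\Phi'(p)\left(f(p)+\Bigl(2\frac{N'(p)}{N(p)}-\frac{\Phi''(p)}{\Phi'(p)}\Bigr)|\n p|^2\right),
\end{equation*}
so the gradient term cancels when $\Phi''/\Phi'=+2N'/N$, i.e.\ $\Phi'(p)=c\,N(p)^{2}$, not $cN(p)^{-2}$. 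Your choice $\Phi'(p)=N(p)^{-2}$ does not remove the term $|\n p|^2$; it doubles its coefficient (test with $N(p)=e^{p}$: one finds $\partial_t q-\Delta q=e^{-2p}\bigl(f(p)+4|\n p|^2\bigr)$). The correct substitution is $\tilde p=\mathscr N(p)$ with $\mathscr N(x)=\int_0^x N^2(\xi)\,d\xi$, which is precisely what the paper does, leading to $\partial_t\tilde p-\Delta\tilde p=\tilde f(\tilde p)$ with $\tilde f(\tilde p)=N^2\bigl(\mathscr N^{-1}(\tilde p)\bigr)f\bigl(\mathscr N^{-1}(\tilde p)\bigr)$. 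A second, more minor, slip: at a root $a$ of $f$ one gets $\tilde f'(q_a)=f'(a)$ exactly (the term involving $\Phi''f$ vanishes and the chain-rule factor $1/\Phi'$ cancels the factor $\Phi'$), not $\Phi'(a)^2f'(a)$; the sign conclusion you need survives either way.

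Once the sign is fixed, the remainder of your argument coincides with the paper's, and your closing caveat about item (2) is in fact a legitimate point of care that the paper passes over silently: since $\tilde f'=f'+2\tfrac{N'}{N}f$ away from the roots, $\|\tilde f'\|_{L^\infty}$ need not be bounded by $\|f'\|_{L^\infty}$, so the spectral hypothesis does not literally transfer through the change of variables; your suggestion of working with the weighted eigenvalue in the original variable (as in the proof of Theorem \ref{Th:Rate}) is a reasonable way to close that point.
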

A possible interpretation of this result is that even if the domain has a large measure,  if it is also very thin, it makes sense that a boundary control should work while if it has a big bulge, it is intuitive that a lack of boundary controllability should occur. 

\def\t{{\theta}}

\section{Proof of Theorem \ref{Th:SHSlow}: slowly varying total population size}\label{Proof1}
\subsection{Lack of controllability to 0 for large inradius}
We prove here the first point of Theorem \ref{Th:SHSlow}. Recall that we want to prove that, if the inradius $\rho_\O$ is bigger than a threshold $\rho^*$ depending only on $f$, then equation \eqref{Eq:AdvectionControlSlow} is not controllable to 0 in (in)finite time.
\\Following \cite{PoucholTrelatZuazua}, we claim that this lack of controllability occurs when the equation
\begin{equation}\label{Eq:Le}
\left\{
\begin{array}{ll}
-\Delta \p_0-\e\langle \n n\,, \n \p_0\rangle=f(\p_0)&\text{ in }\O\,,
\\\p_0=0&\text{ on }\partial \O\,,
\\0\leq \p_0\leq1,&
\end{array}
\right.
\end{equation}
has a non-trivial solution, i.e a solution such that $\p_0\neq 0$. 
Indeed, we have the following Claim:
\begin{claim}\label{Cl:Inter}
If there exists a non-trivial solution $\p_0\neq 0$ to \eqref{Eq:Le}, then \eqref{Eq:AdvectionControlSlow} is not controllable to 0 in infinite time.
\end{claim}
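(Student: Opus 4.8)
The plan is to use $\p_0$ as a barrier from below via the parabolic comparison principle. First I would choose the initial datum to be $p_0 := \p_0$ itself (or any initial datum with $p_0 \geq \p_0$ and $p_0 \leq 1$; taking $p_0=\p_0$ is cleanest), and let $p=p(t,x)$ be the solution of \eqref{Eq:AdvectionControlSlow} associated with an arbitrary admissible control $u$ satisfying $0\leq u\leq 1$ on $\partial\O$. Since $\p_0$ is a stationary solution of the same equation with boundary value $0\leq u$, and $\p_0 = p_0$ at $t=0$, the comparison principle for the operator $\partial_t-\Delta-\e\langle\n n,\n\cdot\rangle-\frac{f(p)-f(q)}{p-q}(\cdot)$ (whose zeroth-order coefficient is bounded since $f$ is smooth on $[0,1]$ and $0\leq p,\p_0\leq 1$) gives $p(t,x)\geq \p_0(x)$ for all $t\geq 0$ and $x\in\O$. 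Hence $\sup_\O p(t,\cdot)\geq \sup_\O\p_0$ for all $t$, so to conclude it suffices to show $\sup_\O \p_0 > 0$; in fact $p(t,\cdot)\to 0$ in $\mathscr C^0(\O)$ would force $\sup_\O \p_0=0$, contradicting non-triviality, so controllability to $0$ in infinite time fails for this $p_0$.

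The one nontrivial ingredient is therefore that a non-trivial solution $\p_0$ of \eqref{Eq:Le} is \emph{not} identically zero, i.e. $\sup_\O\p_0>0$; but this is immediate from the assumption $\p_0\neq 0$ together with $\p_0\geq 0$. (As the surrounding text notes, one in fact has $\sup_\O\p_0>\theta$ by testing the equation/optimality conditions at an interior maximum point $x_0$: there $\n\p_0(x_0)=0$ and $-\Delta\p_0(x_0)\geq 0$, so $f(\p_0(x_0))\geq 0$, which forces $\p_0(x_0)\in\{0\}\cup[\theta,1]$; since $\p_0\not\equiv 0$ and is continuous up to the boundary where it vanishes, the maximum is positive, hence $\geq\theta$. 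This stronger fact is not needed for the claim but clarifies the picture.)

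The main obstacle — really the only point requiring care — is the justification of the comparison principle in the presence of the drift term $\e\langle\n n,\n\cdot\rangle$ and of the semilinear term $f(p)$: one writes $w:=\p_0-p$, which satisfies $\partial_t w-\Delta w-\e\langle\n n,\n w\rangle = f(\p_0)-f(p) = c(t,x)\,w$ with $c(t,x):=\int_0^1 f'(p+s(\p_0-p))\,ds$ bounded in $L^\infty$ (uniformly, using $0\leq p,\p_0\leq1$ and smoothness of $f$ on $[0,1]$), together with $w(0,\cdot)=0$ in $\O$ and $w=\p_0-u\leq 0$ on $\partial\O$ since $\p_0=0\leq u$. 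The standard maximum principle for linear parabolic equations with bounded coefficients then yields $w\leq 0$, i.e. $p\geq\p_0$, which is exactly what is needed. I would simply invoke this, as in \cite{BRZ,PoucholTrelatZuazua}, rather than reprove it.
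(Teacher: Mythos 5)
Your proposal is correct and follows essentially the same route as the paper: fix an initial datum lying above $\p_0$ (the paper takes any $p_0$ with $\p_0\leq p_0\leq 1$, you take $p_0=\p_0$), invoke the parabolic comparison principle to get $p^u(t,\cdot)\geq \p_0$ for every admissible control $u$, and conclude that uniform convergence to $0$ is impossible since $\p_0\not\equiv 0$. The only difference is that you spell out the linearization $w=\p_0-p$ with bounded zeroth-order coefficient justifying the comparison principle, which the paper simply cites.
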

\begin{proof}[Proof of Claim \ref{Cl:Inter}]
This is an easy consequence of the maximum principle. Indeed, let $\eta$ be a non-trivial solution of \eqref{Eq:Le} and let $p_0$ be any initial datum satisfying 
$$\eta\leq p_0\leq 1.$$
Let $u:\R_+\times \partial \O\to [0,1]$ be a boundary control. Let  $p^u$ be the associated solution of \eqref{Eq:AdvectionControlSlow}.
From the parabolic maximum principle \cite[Theorem 12]{Protter}, we have for every $t\in \R_+$,
$$\color{black}\p_0(\cdot)\color{black}\leq p^u(t,\cdot),$$
so that $p^u$ cannot converge to 0 as $t\to \infty$. This concludes the proof.
\end{proof}

It thus remains to establish the following Lemma:
\begin{lemma}\label{Le:NonTrivial}
There exists $\rho^*=\rho^*(n,f)$ such that, for any $\O$ satisfying 
$$\rho_\O>\rho^*$$ there exists a non-trivial solution $\p_0\neq 0$ to  equation \eqref{Eq:Le}. 
\end{lemma}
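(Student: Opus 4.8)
\textbf{Proof plan for Lemma \ref{Le:NonTrivial}.}

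The strategy is to build a non-trivial solution of \eqref{Eq:Le} by the sub- and super-solution method, with the sub-solution supported in a large ball contained in $\O$. First I would fix a ball $\mathbb{B}(x_0,\rho)\subset\O$ with $\rho<\rho_\O$ close to $\rho_\O$. The constant function $\overline{\p}\equiv 1$ is a super-solution of \eqref{Eq:Le} since $f(1)=0$, and it respects the boundary condition $\overline{\p}\geq 0$ on $\partial\O$. The work is in producing a non-trivial sub-solution $\underline{\p}$ with $0\le\underline{\p}\le\overline{\p}=1$, $\underline{\p}=0$ on $\partial\O$, and $\underline{\p}\not\equiv 0$; then the standard monotone iteration scheme gives a solution $\p_0$ with $\underline{\p}\le\p_0\le 1$, hence non-trivial.

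To construct $\underline{\p}$, I would work inside the inscribed ball $\mathbb{B}(x_0,\rho)$ and set $\underline{\p}=0$ outside it. The natural candidate is a suitably rescaled and truncated solution of the drift-free problem, or rather a direct variational/eigenfunction construction: consider the operator $L_\e v := -\Delta v-\e\langle\n n,\n v\rangle$ on $\mathbb{B}(x_0,\rho)$ with Dirichlet conditions. Since $f'(\theta)>0$ and $f(\theta)=0$, for $\p$ slightly below some level $\theta'\in(\theta,1)$ one has $f(\p)\geq 0$ and in fact $f(\p)\ge \delta>0$ on a range $[\theta'-\eta_0,\theta']$. The classical fact (going back to the drift-free analysis in \cite{PoucholTrelatZuazua,BRZ}) is that, once $\rho$ is large enough, one can find a function $\underline{\p}$, radial around $x_0$, increasing from $0$ on $\partial\mathbb{B}(x_0,\rho)$ up to a plateau value in $(\theta,1)$ on a large concentric sub-ball, such that $-\Delta\underline{\p}\le f(\underline{\p})$ pointwise; the point is that on the plateau $-\Delta\underline{\p}=0\le f$ (value above $\theta$), and in the thin transition layer near the boundary the gradient terms are controlled because the layer can be made as relatively thin as we like when $\rho$ is large. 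The only new feature here is the first-order term $-\e\langle\n n,\n\underline{\p}\rangle$: since $n\in\mathscr{C}^1(\R^d)$, $\|\n n\|_{L^\infty(\O)}$ is finite on the relevant region, so this term is bounded by $\e\|\n n\|_\infty|\n\underline{\p}|$ and can be absorbed by slightly enlarging the layer or slightly lowering the plateau, at the cost of increasing the threshold $\rho^*$ — crucially, $\rho^*$ then depends only on $f$, $\e$ and $\|\n n\|_\infty$, hence only on $f$ and $n$ as claimed (for fixed $\e$; one can also make the construction uniform for $\e$ in a bounded range). An alternative, cleaner route: use the first Dirichlet eigenfunction $\psi_\rho>0$ of $L_\e$ on $\mathbb{B}(x_0,\rho)$, normalized by $\|\psi_\rho\|_\infty$ small; since $\lambda_1(L_\e,\mathbb{B}(x_0,\rho))\to 0$ as $\rho\to\infty$, for $\rho$ large we have $\lambda_1<$ the relevant slope of $f$ near $0$, which lets a small multiple $t\psi_\rho$ act as a sub-solution on the region where $f$ is superlinear — but the bistable sign of $f$ near $0$ ($f'(0)<0$) obstructs this directly, so the plateau construction above is the robust one.

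I would then invoke the monotone iteration / Perron argument: starting from $\overline{\p}\equiv 1$ and iterating $L_\e p_{k+1}+Mp_{k+1}=f(p_k)+Mp_k$ with $M>\|f'\|_{L^\infty}$, the sequence decreases and converges to a solution $\p_0\ge\underline{\p}$, so $\p_0\not\equiv 0$; elliptic regularity upgrades it to a classical solution, and the maximum principle together with $f(1)=0$, $f(0)=0$ keeps $0\le\p_0\le 1$. The main obstacle is the sub-solution construction in the presence of the drift: one must verify that the first-order term does not destroy the differential inequality in the boundary layer, which forces a quantitative choice of layer width versus $\rho$ and an adjustment of $\rho^*$ — this is exactly the "perturbative adaptation" the authors allude to, and it is where the dependence $\rho^*=\rho^*(n,f)$ is pinned down. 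Everything else (super-solution, iteration, regularity) is routine.
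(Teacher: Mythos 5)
Your overall strategy is sound and would yield the lemma, but it is genuinely different from the paper's proof, and the one step you defer to a "classical fact" is exactly the step the paper's route is designed to avoid. The paper does not build a pointwise sub-solution at all: it observes that multiplying by $e^{\e n}$ puts the equation in divergence form, $-\n\cdot(e^{\e n}\n p)=e^{\e n}f(p)$, so \eqref{Eq:Le} is the Euler--Lagrange equation of the weighted energy $\mathscr E_1(p)=\frac12\int_\O e^{\e n}|\n p|^2-\int_\O e^{\e n}F(p)$ on $W^{1,2}_0(\O)$. It then takes the same plateau profile you describe ($v_\delta\equiv 1$ on $\mathbb B(0;\rho_\O-\delta)$, a transition layer, $0$ outside the inscribed ball), but only has to estimate two integrals: the gradient term scales like the layer's volume $\sim\rho_\O^{d-1}$ while the potential term is bounded below by $C\,F(1)(\rho_\O-\delta)^d$ with $F(1)=\int_0^1 f>0$, so $\mathscr E_1(v_\delta)<0$ for $\rho_\O$ large and Berestycki--Lions gives a non-trivial (admissible) critical point. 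The payoff of this route is that the drift is absorbed exactly into the bounded weight $e^{\e n}$ (so $\rho^*$ depends on $n$ only through $\Vert n\Vert_{L^\infty}$, uniformly in bounded $\e$), and no pointwise differential inequality ever has to be checked. By contrast, your route hinges on verifying $-\Delta\underline\p-\e\langle\n n,\n\underline\p\rangle\le f(\underline\p)$ across the transition layer, and your justification there is not quite right as stated: thinning the layer does not uniformly help, because the profile must be concave where it leaves the plateau (to match $\underline\p'=0$), and there $-\Delta\underline\p>0$ must be beaten by $f(\underline\p)>0$, which caps how sharp that part of the layer can be; only in the convex portion near $\underline\p=0$ does a thin layer make $-\Delta\underline\p$ large and negative enough to dominate both $|f|$ and the $O(\e\Vert\n n\Vert_\infty/\delta)$ drift term. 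This two-regime matching (essentially the 1D phase-plane "bubble", which is where $\int_0^1f>0$ enters) can be carried out and your perturbative treatment of the drift is then legitimate, but it is substantially more delicate than the three-line energy estimate the paper uses.
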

Since the proof of this Lemma is a straightforward adaptation of \cite[Proposition 3.1]{BRZ}, we postpone it to Appendix \ref{Ap:NonTrivial}.
\subsection{Controllability to 0 and 1}
We now prove the second part of Theorem \ref{Th:SHSlow}, which we rewrite as the following claim:
\begin{claim}\label{Cl:UniZero}Assume $\lambda_1^D(\O)>\Vert f'\Vert_{L^\infty}$.
\begin{enumerate}\item\underline{Controllability to 0:}
There exists $\rho_*=\rho_*(n,f)$ and $\e^*_0>0$ such that, for any $\O$, if $\rho_\O \leq \rho_*$ and $0<\e\leq \e^*_0$, Equation \eqref{Eq:AdvectionControlSlow} is controllable to 0  in infinite time. 
\item \underline{Controllability to 1:}
There exists $\e_1^*>0$ such that, for any  $0<\e\leq \e_1^*$, Equation \eqref{Eq:AdvectionControlSlow} is controllable to 1 in infinite time.
\end{enumerate}
\end{claim}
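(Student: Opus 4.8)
The plan is to reduce both controllability assertions to a single non‑existence statement for stationary barriers, and then to reach $z_0$ and $z_1$ with the constant controls $u\equiv 0$ and $u\equiv 1$ — the strategy underlying the (un)controllability dichotomy for bistable equations in \cite{BRZ,PoucholTrelatZuazua}, which here has to be made robust with respect to the $\e$-drift.

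First, I would fix $a\in\{0,1\}$, apply $u\equiv a$, and use the parabolic comparison principle \cite{Protter}: since $0$ and $1$ are stationary sub/supersolutions, every solution with $0\le p_0\le 1$ stays valued in $[0,1]$, and the solution $\overline{p}_a$ issued from the extremal datum ($p_0\equiv 1$ if $a=0$, $p_0\equiv 0$ if $a=1$) is monotone in $t$ — compare $\overline{p}_a(\cdot+s,\cdot)$ with $\overline{p}_a$ at $t=0$. Since the equation in \eqref{Eq:AdvectionControlSlow} is the gradient flow, in the weight $e^{\e n}$, of $\mathcal E_\e[q]=\int_\O e^{\e n}\big(\tfrac12|\nabla q|^2-F(q)\big)$ with $F(s)=\int_0^sf$, and $\partial_tp$ vanishes on $\partial\O$ for a time-independent control, $\mathcal E_\e$ is nonincreasing along $\overline{p}_a$; together with uniform parabolic estimates on $[1,\infty)\times\overline\O$ and the monotonicity, this forces $\overline{p}_a(t,\cdot)$ to converge in $\mathscr C^0(\overline\O)$ to a solution $\varphi_a$ of
\[-\Delta\varphi_a-\e\langle\nabla n,\nabla\varphi_a\rangle=f(\varphi_a)\ \text{ in }\O,\qquad \varphi_a=a\ \text{ on }\partial\O,\qquad 0\le\varphi_a\le 1 .\]
Granting that $\varphi_a\equiv a$, the controllability follows by a squeeze: for any admissible $p_0$, comparison gives $0\le p(t,\cdot)\le\overline{p}_0(t,\cdot)$ (if $a=0$) or $\overline{p}_1(t,\cdot)\le p(t,\cdot)\le 1$ (if $a=1$) for the solution $p$ of \eqref{Eq:AdvectionControlSlow} driven by $u\equiv a$, and $\overline{p}_a(t,\cdot)\to z_a$ in $\mathscr C^0(\overline\O)$ (monotone convergence to a continuous limit, hence uniform by Dini), so $p(t,\cdot)\to z_a$ in $\mathscr C^0(\overline\O)$ for \emph{every} $0\le p_0\le 1$. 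This cannot hold in finite time, by the comparison principle \cite{BRZ}.

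The crux is the non‑existence lemma: \emph{under $\lambda_1^D(\O)>\|f'\|_{L^\infty}$ there is $\e^\sharp=\e^\sharp(f,n,\O)>0$ such that, for $0<\e\le\e^\sharp$, the stationary problem above with $a=0$ (resp. $a=1$) has $\varphi\equiv 0$ (resp. $\varphi\equiv 1$) as its only solution valued in $[0,1]$.} I would test the stationary equation against $\varphi\in W^{1,2}_0(\O)$ when $a=0$, and against $v:=1-\varphi\in W^{1,2}_0(\O)$ when $a=1$ (after using $f(1)=0$ to rewrite the equation for $v$). Using $f(a)=0$, the mean value theorem to write $f(\varphi)\,\varphi$, resp. $-f(1-v)\,v$, as $f'(\zeta(\cdot))\,\psi^2$ with $\zeta(x)\in[0,1]$ and $\psi\in\{\varphi,v\}$, and Cauchy–Schwarz on the drift term, one obtains in both cases
\[\int_\O|\nabla\psi|^2\ \le\ \|f'\|_{L^\infty}\int_\O\psi^2\ +\ \e\,\|\nabla n\|_{L^\infty(\O)}\int_\O|\nabla\psi|\,|\psi| .\]
Absorbing the last term with Young's inequality (parameter $\delta$) and then using $\int_\O|\nabla\psi|^2\ge\lambda_1^D(\O)\int_\O\psi^2$, the strict gap $\lambda_1^D(\O)>\|f'\|_{L^\infty}$ lets me pick $\delta$, then $\e$, so small that $\big(1-\tfrac\delta2\big)\lambda_1^D(\O)>\|f'\|_{L^\infty}+\tfrac{\e^2}{2\delta}\|\nabla n\|_{L^\infty(\O)}^2$, which forces $\psi\equiv 0$. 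As a consistency check, $\lambda_1^D(\O)>\|f'\|_{L^\infty}$ already implies $\rho_\O<\rho_*:=\big(\lambda_1^D(\B(0;1))/\|f'\|_{L^\infty}\big)^{1/2}$ by domain monotonicity of the Dirichlet eigenvalue, so the inradius hypothesis in the controllability‑to‑$0$ item is automatic.

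The hard part will be exactly this robustness of the non‑existence lemma: at $\e=0$ it is the classical variational argument of \cite{BRZ,PoucholTrelatZuazua}, and the point is to absorb the non‑self‑adjoint first‑order perturbation $\e\langle\nabla n,\nabla\psi\rangle$ into the leading coercivity — which is what fixes the threshold $\e^\sharp$ and why one needs the \emph{strict} inequality $\lambda_1^D(\O)>\|f'\|_{L^\infty}$. The rest — comparison and monotonicity, identifying $\varphi_a$ as a steady state via the gradient‑flow structure, and passing monotone convergence to $\mathscr C^0(\overline\O)$ — is routine. (Controllability to $\theta$ in finite time, also announced in Theorem \ref{Th:SHSlow}, is not covered by this Claim; it is obtained separately and additionally invokes a local exact controllability step near $z_\theta$, cf. Proposition \ref{Pr:ControlExact} — this is where the perturbative adaptation of the staircase method of \cite{CoronTrelat} enters.)
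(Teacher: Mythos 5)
Your proposal is correct, and its overall architecture — static controls $u\equiv a$, reduction to uniqueness of the stationary problem with boundary value $a$, then long-time convergence — is the same as the paper's. For controllability to $0$ your uniqueness argument is essentially the paper's in a different guise: the paper passes to the divergence form $-\n\cdot(e^{\e n}\n p)=e^{\e n}f(p)$ and bounds the weighted first eigenvalue from below by $e^{-\e\Vert n\Vert_{L^\infty}}\lambda_1^D(\O)$, whereas you keep the non-divergence form and absorb the drift term $\e\int\langle\n n,\n\p\rangle\p$ by Young's inequality; both exploit the strict gap $\lambda_1^D(\O)>\Vert f'\Vert_{L^\infty}$ to fix $\e^*$. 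The genuine divergence is in the uniqueness for boundary value $1$: the paper does \emph{not} run an energy estimate there, but instead invokes the known $\e=0$ uniqueness for \eqref{Eq:UniUn} (a structural fact tied to $\int_0^1f>0$, valid for every domain) and rules out non-trivial solutions for small $\e$ by a compactness/contradiction argument via elliptic estimates. Your unified test-function argument applied to $v=1-\p$ is cleaner and gives an explicit, quantitative $\e_1^*$, but it consumes the spectral hypothesis for item 2 as well; the paper's route shows that controllability to $1$ for small $\e$ needs no size restriction on $\O$ at all, a point your proof does not recover (though it is not needed for the Claim as stated, where $\lambda_1^D(\O)>\Vert f'\Vert_{L^\infty}$ is a blanket assumption). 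Finally, your treatment of the parabolic limit — monotonicity of the solution issued from the extremal datum, identification of the limit as a steady state, Dini, and squeezing for arbitrary $p_0$ — is a more careful rendering of what the paper compresses into "parabolic regularity and Arzela--Ascoli", and your observation that the spectral gap already forces $\rho_\O$ below an explicit threshold by domain monotonicity is consistent with how the paper uses the two hypotheses interchangeably.
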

\begin{proof}[Proof of Claim \ref{Cl:UniZero}]
\begin{enumerate}
\item \underline{Controllability to 0:} 

The key part is to prove the following: 
\textit{ There exists $\rho_*>0$ such that, if $\rho_\O\leq\rho_*$, then $y\equiv 0$ is the only solution to }
\begin{equation}
\left\{\begin{array}{ll}\label{Eq:Intermediaire}
-\Delta y-\e\langle \n n\,, \n y\rangle=f(y)\,,&\text{ in }\O,\\y=0 &\text{ on }\partial \O.
\end{array}\right.
\end{equation}
Indeed, assuming that the uniqueness of  \eqref{Eq:Intermediaire} holds, consider the static control $u\equiv 0$ and the solution of 
$$\left\{\begin{array}{ll}
\frac{\partial p}{\partial t}-\Delta p-\e\langle \n n\,, \n p\rangle=f(p)\,,&\text{ in }\R_+\times \O\,, \\p=0& \text{ on }\R_+\times \partial \O,\\p(t=0,\cdot)=p_0&\text{ in }\O.\end{array}\right.$$
From standard parabolic regularity and the Arzela-Ascoli theorem, $p$ converges uniformly in $\O$ to a solution $\overline p$ of \eqref{Eq:Intermediaire}
However, by the uniqueness of  \eqref{Eq:Intermediaire}, we have $\overline p=0$, whence $$p(t,\cdot)\underset{t\to \infty}{\overset{\mathscr C^0(\overline \O)}\rightarrow}0,$$
which means that the static strategy drives $p_0$ to $0$.

We claim that the uniqueness of solutions to \eqref{Eq:Intermediaire} follows from  spectral arguments: first of all, uniqueness holds for \eqref{Eq:Intermediaire}
if the first eigenvalue $\lambda(\e,n,\O)$ of the operator
$$\mathcal L_{\e,n}:p\mapsto-\n \cdot\left(e^{\e n}\n p\right)$$ with Dirichlet boundary conditions satisfies
$$\lambda(\e,n,\O)>||f'||_{L^\infty}e^{\e ||n||_{L^\infty}},$$ as is standard from classical theory for non-linear elliptic PDE  \cite{LionsBerestycki}.
\\We now notice that, $n$ being \color{black}bounded\color{black}, the Rayleigh quotient formulation for the eigenvalue 
$$\lambda(\e,n,\O)=\inf_{p\in W^{1,2}_0(\O)}\frac{\int_\O e^{\e n}|\n p|^2}{\int_\O p^2}$$ yields that 
$$\lambda(\e,n,\O)\geq e^{-\e\Vert n\Vert_{L^\infty}} \lambda_1^D(\O)$$ where $\lambda_1^D(\O)$ is the first eigenvalue of the Laplace operator with Dirichlet boundary conditions. Thus we are reduced to checking that 
$$\lambda_1^D(\O)>||f'||_{L^\infty}e^{\e ||n||_{L^\infty}}$$ for $\e>0$ small enough. If the condition $\lambda_1^D(\O)>||f'||_{L^\infty}$ is fulfilled, taking the limit as $\e\to 0$ yields the desired result.
\item \underline{Controllability to 1}
Using the same arguments, we claim that controllability to 1 can be achieved through the static control $u\equiv 1$ provided the only solution to 
\begin{equation}\label{Eq:UniUn}\left\{\begin{array}{ll}
-\Delta\overline p-\e\langle \n n\,, \n \overline p\rangle=f(\overline p)\,,&\text{ in }\O\,,\\\overline p=1&\text{ on }\partial \O,\\0\leq \overline p\leq 1&\\\end{array}\right.\end{equation} is $\overline p\equiv 1$.
\\We already know (see \cite{BRZ,PoucholTrelatZuazua}) that uniqueness holds for $\e=0$. Now this implies that uniqueness holds for $\e$ small enough. Indeed, argue by contradiction and assume that, for every $\e>0$ there exists a non-trivial solution $\overline p_\e$ to \eqref{Eq:UniUn}. Since $\overline p_\e \neq 1$, \color{black}$\overline p_\e$ \color{black} reaches a minimum at some $\overline x_\e\in \O$, and so 
$$f(\overline p_\e(\overline x_\e))<0$$ which means that 
$$\overline p_\e(\overline x_\e)<\theta.$$
Standard elliptic estimates entail that, as $\e\to 0$, $p_\e$ converges in $W^{1,2}(\O)$ and in $\mathscr C^0(\overline \O)$ to $\overline p$ satisfying
\begin{equation}\label{Eq:UniZero}\left\{\begin{array}{ll}
-\Delta\overline p=f(\overline p)&\text{ in }\O\,,\\\overline p=1&\text{ on }\partial \O,\\0\leq \overline p\leq 1&\\\end{array}\right.\end{equation}
and such that there exists a point $\overline x$ satisfying 
$$\overline p(\overline x)\leq\theta$$ which is a contradiction since we know uniqueness holds for \eqref{Eq:UniUn}. This concludes the proof.\end{enumerate}

\end{proof}

\subsection{Proof of the controllability  to $\theta$ for small inradii}

\subsubsection{Structure of the proof: the staircase method}
We recall that we want to control the semilinear heat equation
\begin{equation}\label{Eq:Example}
\left\{\begin{array}{ll}
\frac{\partial p}{\partial t}-\Delta p-\e\langle \n n\,, \n p\rangle=f(p)&\text{ in }\R_+\times\O\,,
\\p(t,\cdot)=u(t,\cdot)&\text{ on }\R_+\times \partial \O,
\\p(t=0,\cdot)=y_0&
\end{array}
\right.
\end{equation}
to $z_\theta\equiv \theta$ with additional constraints on the control $u$, which we drop for the time being. We first state a local exact controllability result  \cite[Lemma 1]{PoucholTrelatZuazua}, \cite[Lemma 2.1]{PighinZuazua}:

\begin{proposition}\label{Pr:ControlExact}[Local exact controllability]
Let $T>0$. There exists $\delta_1(T)>0\,, C(T)>0$ such that for all steady state  $y_f$ of  \eqref{Eq:Example}, for all $0\leq y_d\leq 1$  satisfying 
$$||y_d-y_f||_{\mathscr C^0}\leq \delta_1(T)$$ then \eqref{Eq:Example} is controllable from $y_d$ to $y_f$ in finite time $T<\infty$ through a control $u$. Furthermore, letting $\overline u=y_f|_{\partial \O}$, the control function  $u=u(t)$ satisfies
\begin{equation}\label{Eq:EstimControl}||u(t)-\overline u||_{\mathscr C^0(\partial \O)}\leq C(T)\delta_1(T).
\end{equation}
\end{proposition}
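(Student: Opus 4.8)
\textbf{Proof plan for Proposition \ref{Pr:ControlExact}.}

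The statement is a classical local exact controllability result for a semilinear heat equation around a steady state, so the plan is to follow the standard linearization-plus-fixed-point scheme, carefully tracking the quantitative dependence of the control on the data. First I would fix $T>0$ and a steady state $y_f$ of \eqref{Eq:Example}, and write the equation for the deviation $w := p - y_f$. Since $y_f$ solves the stationary equation, $w$ satisfies a semilinear parabolic equation of the form $\partial_t w - \Delta w - \e\langle \n n, \n w\rangle = f(y_f + w) - f(y_f) = f'(y_f) w + g(y_f, w)$, where $g$ collects the quadratic remainder and satisfies $|g(y_f,w)| \lesssim |w|^2$ locally, with boundary data $w = u - \overline u$ on $\partial \O$ and $w(0,\cdot) = y_d - y_f$. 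Controlling $p$ from $y_d$ to $y_f$ in time $T$ is then equivalent to steering $w$ from $y_d - y_f$ to $0$, which by assumption starts from data of size at most $\delta_1(T)$ in $\mathscr C^0$.

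The next step is to treat the linearized problem $\partial_t w - \Delta w - \e \langle \n n, \n w\rangle = f'(y_f) w$ with a boundary control. This is a linear parabolic equation with smooth ($L^\infty$, indeed $\mathscr C^1$) coefficients; by the classical Fursikov--Imanuvilov Carleman estimates and the Lebeau--Robbiano method, it is null-controllable in any time $T > 0$ with boundary controls, and the cost of control is bounded by some $C_{\mathrm{lin}}(T)$ times the norm of the initial datum in an appropriate space. One then constructs, via a duality/HUM argument, a bounded control operator that to each small initial state associates a control driving it to $0$; one typically works in a slightly smoothed setting (e.g. interior controls on a larger domain, or directly boundary controls obtained by restriction) so as to land the trajectory in a space that embeds into $\mathscr C^0$, using maximal parabolic regularity and Sobolev embeddings. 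The quantitative bound \eqref{Eq:EstimControl} comes out of this construction: the control $u - \overline u$ is bounded in $\mathscr C^0(\partial\O)$ (after a bootstrap on regularity) by $C(T)$ times $\|y_d - y_f\|_{\mathscr C^0} \le C(T)\delta_1(T)$.

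Finally, I would close the nonlinear problem by a fixed-point argument: define a map $\Phi$ that, given a small trajectory $\tilde w$, produces the control driving to $0$ the linear equation with source $g(y_f, \tilde w)$ (now treated as a known inhomogeneity), and let $w$ be the resulting solution; one checks that for $\delta_1(T)$ small enough $\Phi$ maps a small ball of a suitable Banach space (e.g. $L^\infty((0,T)\times\O)$ intersected with a parabolic regularity space) into itself and is a contraction, using the quadratic estimate on $g$ and the linear cost bound. The fixed point is the desired controlled trajectory, and its boundary trace gives the control $u$ satisfying \eqref{Eq:EstimControl}. The main obstacle — and the only place the drift term $\e\langle\n n,\n p\rangle$ or the steady state $y_f$ enters nontrivially — is ensuring that the linear control cost $C(T)$ can be taken uniform over the relevant family of steady states $y_f$ and drifts; since the coefficients $f'(y_f)$ and $\n n$ are uniformly bounded in $\mathscr C^0$ (respectively $\mathscr C^1$) independently of the particular steady state, the Carleman constants depend only on $\|f'\|_{L^\infty}$, $\|\n n\|_{L^\infty}$, $\O$ and $T$, so uniformity holds; making this uniformity explicit is the technical heart of the argument, but it is entirely standard and is exactly what is done in \cite{PoucholTrelatZuazua,PighinZuazua}, so I would simply invoke those references for the details.
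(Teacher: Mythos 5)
Your outline (linearization around $y_f$, null controllability of the linearized boundary-control problem via Carleman/Lebeau--Robbiano estimates, a fixed-point argument for the quadratic remainder, and uniformity of the control cost in $y_f$ through the uniform bounds on $f'(y_f)$ and $\n n$) is exactly the standard argument behind this statement, and the paper itself does not reprove it: it simply invokes \cite{PoucholTrelatZuazua} (Lemma 1) and \cite{PighinZuazua} (Lemma 2.1), which is where the details you defer to are carried out. So your proposal is correct and coincides with the paper's approach.
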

We now assume that $\rho_\O\leq \rho^*$, that is, thanks to  Step 1 of the proof of Claim \ref{Cl:UniZero}, we assume that we have uniqueness for  \eqref{Eq:Intermediaire}.
We then proceed along three different steps:
\begin{enumerate}
\item[$\bullet$]\underline{Step 1:}
Starting from any initial condition $0\leq p_0\leq 1$, we first set the static control
$$u(t,x)=0.$$ 
Since $n$ is $\mathscr C^1$, standard parabolic estimates and  the Arzela-Ascoli theorem ensures that the solution $p^u$ of \eqref{Eq:AdvectionControlSlow} converges uniformly, as $t\to \infty$ to a solution $\overline \eta $ of \eqref{Eq:Intermediaire}.
However, from Claim \ref{Cl:UniZero}, $\rho_\O\leq \rho_*(n,f)$ implies that $z_0\equiv 0$ is the unique solution of this equation. Thus, this static control guarantees that, for every $\delta>0$, there exists $T_1>0$ such that, for any $t\geq T_1$
$$||p^u(t,\cdot)||_{L^\infty}\leq \frac{\delta}2.$$
\item[$\bullet$] \underline{Step 2:} We prove that there exists a steady state $p_0$ of \eqref{Eq:Example}, \color{black} that is, a solution of \[
-\Delta p_0-\e\langle \n n\,, \n p_0\rangle=f(p_0)\text{ in }\O\,,
\]
where we do not specify the boundary conditions, but \color{black}  such that 
$$0<\inf_{x \in \O} p_0(x)\leq ||p_0||_{L^\infty}\leq \frac\delta2$$ where $\delta>0$ is chosen to apply Proposition \ref{Pr:ControlExact}; we drive $p^u(T_1,\cdot)$ to $ p_0$ in finite time.
\item[$\bullet$] \underline{Step 3:} We drive $p_0$ to $\theta$ using the staircase method. \end{enumerate}
In this setting, we are thus reduced to the controllability of any initial datum to a small enough $p_0$ to $\theta$ in finite time.

\paragraph{The staircase method}
We want to use the staircase method of Coron and Tr\'elat, see \cite{CoronTrelat} for the one-dimensional case and \cite{PighinZuazua} for a full derivation. We briefly recall the  most important features of this method: assume that there exists a $\mathscr C^0$-continuous path of steady-states of \eqref{Eq:Example} $\Gamma=\{p_s\}_{s\in [0,1]}$  such that $p_0=y_0$ and $p_1=y_1$. Then \eqref{Eq:Example} is controllable from $y_0$ to $y_1$ in finite time. Indeed, as is usually done, we consider a time $T_1>0$ and a subdivision 
$$0=s_{i_1}<\dots<s_{i_K}=1$$ of $[0,1]$ such that 
$$\forall i\in \{1,\dots,K-1\}\,, ||p_{s_i}-p_{s_{i+1}}||_{\mathscr C^0(\O)}\leq \delta_1$$ where $\delta_1=\delta_1(T_1)$ is the controllability parameter  given by Proposition \ref{Pr:ControlExact}.  We then control each $p_{s_i}$ to $p_{s_{i+1}}$ in finite time by Proposition \ref{Pr:ControlExact}. This result does not necessarily yield constrained controls, but, thanks to estimate \eqref{Eq:EstimControl} we can enforce these constraints, by choosing a control parameter $\delta_1$ small enough. Thus, the key part is to find a continuous path of steady-states for the perturbed system with slowly varying total population size \eqref{Eq:AdvectionControlSlow}. However, it suffices to have a finite number of steady-states that are close enough to each other, starting at $y_0$ and ending at $y_1$. We represent the situation in Figure \ref{Fi:12} below:\begin{center}

\begin{figure}[H]\begin{center}
\begin{tikzpicture}[scale=2]
\draw[gray,thin,dashed] plot [smooth, tension=1] coordinates{ (2,0) (1.5,0.5) (1.8,1) (2.2,1.5) (1.7,2) };

\draw (2,0) node[anchor=south,Plum]{$p_0$};
\draw (2,0) node[Plum]{$\bullet$};

\draw (1.5,0.5) node[anchor=east,Plum]{$p_{s_1}$};
\draw (1.5,0.5) node[Plum]{$\bullet$};

\draw (1.8,1) node[anchor=west,Plum]{$p_{s_2}$};
\draw (1.8,1) node[Plum]{$\bullet$};

\draw (2.2,1.5) node[anchor=west,Plum]{$p_{s_3}$};
\draw (2.2,1.5) node[Plum]{$\bullet$};

\draw (1.7,2) node[anchor=south,Plum]{$p_1$};
\draw (1.7,2) node[Plum]{$\bullet$};

\draw (2,-0.05) edge [->,magenta,out=250, in=-100]  (1.5,0.45);

\draw (1.54,0.5) edge [->,magenta,out=-10, in=-55]  (1.8,0.95);

\draw (1.8,1.042) edge [->,magenta,out=100, in=-200]  (2.15,1.5);
\draw (2.16,1.52) edge [->,magenta,out=120, in=-150]  (1.66,2);

\end{tikzpicture}\end{center}
\caption{The dashed curve is the path of steady states (for instance in $W^{1,2}(\O)\cap \mathscr C^0(\overline \O)$), and the points are the close enough steady states. We represent the exact control in finite time $T_1>0$ with the pink arrows.}\label{Fi:12}\end{figure}
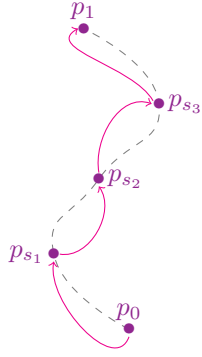\end{center}

\subsubsection{Perturbation of a path of steady-states}
We are going to perturb the path of steady-states using the implicit function Theorem in order to get a sequence of close enough steady-states.
\begin{remark}
Here, if we were to try and prove, for $\e$ small enough, the existence of a continuous path of steady states, the idea would be to start from a path  $(p_{0,s})_{s\in [0,1]}$ for $\e=0$ (which we know exists from \cite{BRZ,PoucholTrelatZuazua}) and to try and perturb it into a path for $\e>0$ small enough, thus giving us a path $\{p_{\e,s}\}_{s\in [0,1]\,, \e>0}$. However, doing it for the whole path requires some kind of implicit function theorem or, at least, some bifurcation argument. Namely, to construct the path, we would need to ensure that either 
$$\mathcal L^{s,\e}:=-\n \cdot(e^{\e n}\n )-e^{\e n}f'(p_{0,s})$$  has no zero eigenvalue for $\e=0$ or that it has a non-zero crossing number (namely, a non zero number of eigenvalues enter or leave $\R_+^*$ as $\e$ increases from $-\delta$ to $\delta$). In the first case, the implicit function theorem would apply;  in the second case, Bifurcation Theory (see \cite[Theorem II.7.3]{Kielhfer}) would ensure the existence of a branch $p_{\e,s}$ for $\e$ small enough. These conditions seem too hard to check for an arbitrary path of continuous of steady states. Hence, we focus on perturbing a finite number of points close enough on the path since, as we noted, this is enough to ensure exact controllability. 
\end{remark}

Henceforth, our goal is the following proposition:
\begin{proposition}\label{Pr:Bifurcation}
Let $\delta>0$. There exists $K\in\mathbb{N}$ and $ \e^*_\theta>0$ such that, for any $0<\e\leq  \e^*_\theta$, there exists a sequence $\{p_{\e,i}\}_{i=1,\dots K}$ satisfying:
\begin{enumerate}
\item[$\bullet$] For every $i=1,\dots,K$, $p_{\e,i}$ is a steady-state of \eqref{Eq:AdvectionControlSlow}:
$$-\Delta p_{\e,i}-\e\langle \n n\,, \n p_{\e,i}\rangle=f(p_{\e,i}),$$
\item[$\bullet$]
$p_{\e,K}=z_\theta\equiv \theta\,,\quad 0<\inf p_{\e,1}\leq ||p_{\e,1}||_{L^\infty}\leq \delta,$
\item[$\bullet$]For every $i=1,\dots,K$,
$$\frac{\delta}2\leq p_{\e,i}\leq ||p_{\e,i}||_{L^\infty}\leq 1-\frac\delta2,$$
\item[$\bullet$] For every $i=1,\dots,K-1$, 
$$\left|\left| p_{\e,i+1}-p_{\e,i}\right|\right|_{L^\infty}\leq \delta.$$
\end{enumerate}
\end{proposition}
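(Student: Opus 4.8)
The plan is to produce the chain $\{p_{\e,i}\}$ by perturbing, one point at a time, a path of steady-states of the \emph{unperturbed} problem $-\Delta p=f(p)$ through the implicit function theorem. The structural input that makes this painless is that, under the standing hypothesis $\lambda_1^D(\O)>\|f'\|_{L^\infty}$ of this subsection, \emph{every} steady-state of \eqref{Eq:Example} is non-degenerate, so no bifurcation can occur and nothing prevents us from perturbing an arbitrary \emph{finite} subfamily of the path --- which, as the Remark above anticipates, is all we need.

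\emph{Step 1 (the unperturbed path).} I would first recall from \cite{BRZ,PoucholTrelatZuazua} a continuous path $\{p_{0,s}\}_{s\in[0,1]}$ of solutions of $-\Delta p=f(p)$ in $\O$ joining a small positive steady-state $p_{0,0}$ (with $0<\inf p_{0,0}\le\|p_{0,0}\|_{L^\infty}\le\delta$) to $p_{0,1}=z_\theta\equiv\theta$. Concretely, for $\mu\in[0,1]$ let $p_\mu$ be the solution of $-\Delta p=f(p)$ in $\O$, $p=\mu$ on $\partial\O$, $0\le p\le1$ (existence by the monotone method between the sub/supersolutions $0$ and $1$); under $\lambda_1^D(\O)>\|f'\|_{L^\infty}$ it is unique --- two solutions $p,\tilde p$ give $w=p-\tilde p\in W^{1,2}_0(\O)$ with $\int_\O|\n w|^2=\int_\O f'(\xi)w^2\le\|f'\|_{L^\infty}\int_\O w^2<\lambda_1^D(\O)\int_\O w^2$, so $w\equiv0$ --- and depends continuously and (by the comparison principle, valid under the spectral gap) non-decreasingly on $\mu$, with $p_\theta\equiv\theta$. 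One sets $p_{0,s}:=p_{\mu(s)}$ for a continuous non-decreasing surjection $\mu(\cdot):[0,1]\to[\mu_0,\theta]$, $\mu_0>0$ small, so that $\mu_0\le p_{0,s}\le\theta$ on $\overline\O$ for all $s$. For the first point I would use that, for $\mu$ small, $p_\mu<\theta$, so $f(p_\mu)\le0$, so $p_\mu$ is subharmonic with $\|p_\mu\|_{L^\infty}=\mu$, while comparing $\mu-p_\mu\ge0$ with a multiple of the torsion function $w_\O$ gives $\mu-p_\mu\le\|f'\|_{L^\infty}\mu\|w_\O\|_{L^\infty}$; choosing $\mu_0\in(\delta/2,\delta)$ suitably --- which needs $\|f'\|_{L^\infty}\|w_\O\|_{L^\infty}$ small, guaranteed by the standing assumptions after possibly shrinking the inradius threshold $\rho^*$ (or strengthening the lower bound on $\lambda_1^D(\O)$) --- yields $\tfrac\delta2\le p_{0,0}\le\|p_{0,0}\|_{L^\infty}<\delta$, and then, by monotonicity and for $\delta$ small (the regime of interest), $\tfrac\delta2\le p_{0,s}\le\theta\le1-\tfrac\delta2$ for all $s$.

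\emph{Step 2 (discretize and perturb).} Since $s\mapsto p_{0,s}$ is uniformly continuous into $\mathscr C^0(\overline\O)$, I would fix $K\in\N$ (depending only on $\delta,\O,f$) and $0=s_1<\dots<s_K=1$ with $\|p_{0,s_i}-p_{0,s_{i+1}}\|_{\mathscr C^0(\overline\O)}\le\delta/2$. For $i=1,\dots,K-1$, apply the implicit function theorem to $\Phi_i(\e,v):=-\Delta(p_{0,s_i}+v)-\e\langle\n n,\n(p_{0,s_i}+v)\rangle-f(p_{0,s_i}+v)$, regarded as a map $\R\times(W^{2,q}(\O)\cap W^{1,q}_0(\O))\to L^q(\O)$ for $q>d$: it is smooth (as $\n n\in L^\infty$ and $f\in\mathscr C^\infty$), vanishes at $(0,0)$, and $D_v\Phi_i(0,0)=-\Delta-f'(p_{0,s_i})$ with Dirichlet conditions is an isomorphism, its first eigenvalue being $\ge\lambda_1^D(\O)-\|f'\|_{L^\infty}>0$. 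This gives $\e_i>0$ and a branch $\e\mapsto p_{\e,i}:=p_{0,s_i}+v_i(\e)$ (with $v_i(0)=0$) of steady-states of \eqref{Eq:AdvectionControlSlow} with constant boundary datum $\mu(s_i)\in[0,\theta]$ and $\|p_{\e,i}-p_{0,s_i}\|_{\mathscr C^0(\overline\O)}\to0$ as $\e\to0$. Set $p_{\e,K}:=z_\theta\equiv\theta$ (a steady-state of the perturbed equation for every $\e$, since $\n\theta\equiv0$, $f(\theta)=0$) and $\e^*_\theta:=\min_{1\le i\le K-1}\e_i$, shrunk further so that $\|p_{\e,i}-p_{0,s_i}\|_{\mathscr C^0}\le\eta$ for all $i$ and all $0<\e\le\e^*_\theta$, with $\eta$ small relative to $\delta$. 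Then the triangle inequality gives $\|p_{\e,i+1}-p_{\e,i}\|_{L^\infty}\le\delta$, while $\tfrac\delta2\le p_{\e,i}\le\|p_{\e,i}\|_{L^\infty}\le1-\tfrac\delta2$, $\|p_{\e,1}\|_{L^\infty}\le\delta$ and $p_{\e,K}=z_\theta$ follow from Step 1 once $\eta$ is small. (The precise constant $\tfrac\delta2$ in the lower bound is inessential for the application --- any uniform positive lower bound together with an upper bound strictly below $1$ suffices for the staircase.)

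\emph{Main obstacle.} The real content is the non-degeneracy observation of the first paragraph. Perturbing a path of steady-states is, a priori, a bifurcation problem, and --- as the Remark above stresses --- controlling the whole path would require either checking that $\mathcal L^{s,\e}=-\n\cdot(e^{\e n}\n)-e^{\e n}f'(p_{0,s})$ has no zero eigenvalue for all $s$, or computing a crossing number, neither tractable for an arbitrary continuous path. The spectral gap $\lambda_1^D(\O)>\|f'\|_{L^\infty}$ removes the difficulty wholesale: $-\Delta-f'(p)$ has first Dirichlet eigenvalue $\ge\lambda_1^D(\O)-\|f'\|_{L^\infty}>0$, hence is invertible, for \emph{every} steady-state $p$ (indeed for every bounded $p$, since $f'(p)\le\|f'\|_{L^\infty}$ pointwise) --- so the implicit function theorem applies, one point at a time, with no further input, and finitely many points are enough. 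What remains is bookkeeping; the fussiest piece is squeezing $p_{\e,1}$ into the slab $[\delta/2,\delta]$, which rests on the near-constancy of small-boundary-datum solutions and is the only point where a threshold might need shrinking.
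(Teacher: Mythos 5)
Your proof is essentially correct, but it takes a genuinely different route from the paper's, and the difference is instructive. The paper does \emph{not} build the path on $\O$: it takes the radial path $\{p^{0,s}\}$ of \cite{BRZ} on a larger ball $\mathbb B(0;R_\O)\supseteq\O$ and restricts it, and precisely because $\lambda_1^D(\mathbb B(0;R_\O))$ may fall below $\|f'\|_{L^\infty}$ even when $\lambda_1^D(\O)$ does not, the linearizations $\mathscr L_i=-\Delta-f'(p_{0,i})$ on the ball can be resonant. Most of the paper's proof is therefore devoted to the set $\Gamma$ of resonant indices, which it handles by extending the radial solution to a slightly larger ball $\mathbb B(0;R_\O+\tilde\delta)$ and using strict domain-monotonicity of Dirichlet eigenvalues to push the zero eigenvalue off the spectrum before applying the implicit function theorem there and restricting back. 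Your construction of the path directly on $\O$ via the Dirichlet problems $-\Delta p_\mu=f(p_\mu)$, $p_\mu=\mu$ on $\partial\O$, $\mu\in[\mu_0,\theta]$, combined with the observation that the standing gap $\lambda_1^D(\O)>\|f'\|_{L^\infty}$ forces the first eigenvalue of $-\Delta-f'(p)$ on $\O$ to be $\ge\lambda_1^D(\O)-\|f'\|_{L^\infty}>0$ for \emph{every} bounded $p$, eliminates the resonance issue wholesale and shortens the argument considerably; the trade-off is that you must supply existence, uniqueness, monotonicity and $\mathscr C^0$-continuity of $\mu\mapsto p_\mu$ yourself (all of which your sub/supersolution and energy arguments deliver correctly), whereas the paper imports the path ready-made from \cite{BRZ}. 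Note also that the boundary traces of your $p_{\e,i}$ are the constants $\mu(s_i)\in[\mu_0,\theta]$, which is actually convenient for enforcing $0\le u\le1$ in the subsequent staircase step.

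The one genuine soft spot is the quantitative lower bound $\tfrac\delta2\le p_{\e,1}$. Your torsion-function comparison gives $p_{\mu}\ge\mu\left(1-\|f'\|_{L^\infty}\|w_\O\|_{L^\infty}\right)$, and $\|f'\|_{L^\infty}\|w_\O\|_{L^\infty}<\tfrac12$ does \emph{not} follow from $\lambda_1^D(\O)>\|f'\|_{L^\infty}$ alone (already on an interval one only gets $\lambda_1^D\|w_\O\|_{L^\infty}=\pi^2/8>1$), so as written you are proving the proposition under a slightly strengthened spectral hypothesis. You flag this honestly, and it is indeed harmless: strict positivity $\inf_{\overline\O}p_{\mu_0}>0$ follows from the strong maximum principle without any quantitative input, and any uniform positive lower bound (with traces bounded away from $0$ and $1$) is all the staircase needs; the paper's own proof produces $\|p^{0,s_0}\|_{L^\infty}\le\delta/2$, which is in the same tension with the literal bullet ``$\tfrac\delta2\le p_{\e,1}$'' of the statement. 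So this is a defect of the statement's normalization rather than of your argument.
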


As explained, this Proposition gives us the desired conclusion:
\begin{claim}\label{Cl:Conc}
Proposition \ref{Pr:Bifurcation} implies the controllability to $\theta$ for any initial datum $p_0$ in Equation \eqref{Eq:AdvectionControl}.\end{claim}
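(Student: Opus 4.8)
The plan is to combine the three preparatory steps of the staircase method described above with the conclusion of Proposition \ref{Pr:Bifurcation}. Fix an initial datum $0\leq p_0\leq 1$ and a target accuracy $\delta>0$ which we will eventually take equal to the controllability radius $\delta_1(T_1)$ furnished by Proposition \ref{Pr:ControlExact} for a conveniently chosen time horizon $T_1>0$. First I would invoke Step 1: since $\rho_\O\leq \rho_*$ we may apply the uniqueness statement from Step 1 of the proof of Claim \ref{Cl:UniZero}, so that the static control $u\equiv 0$ drives $p_0$ uniformly to the unique steady state $z_0\equiv 0$; hence there is a time $T_0$ with $\|p^u(T_0,\cdot)\|_{L^\infty}\leq \delta/2$.

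Next I would feed this into Step 2: apply Proposition \ref{Pr:Bifurcation} with the chosen $\delta$ to obtain, for $\e\leq \e_\theta^*$, the finite family $\{p_{\e,i}\}_{i=1,\dots,K}$ of steady states of \eqref{Eq:AdvectionControlSlow}. Since $0<\inf p_{\e,1}\leq \|p_{\e,1}\|_{L^\infty}\leq \delta$ and $\|p^u(T_0,\cdot)\|_{L^\infty}\leq \delta/2$, the state $p^u(T_0,\cdot)$ is within $\mathscr C^0$-distance $\delta$ of the steady state $p_{\e,1}$; taking $\delta\leq \delta_1(T_1)$ small enough, Proposition \ref{Pr:ControlExact} yields a control steering $p^u(T_0,\cdot)$ exactly to $p_{\e,1}$ in time $T_1$, while estimate \eqref{Eq:EstimControl} guarantees that the control stays in $[0,1]$ provided $\delta$ was chosen small enough relative to $C(T_1)$. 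Then I would iterate along the staircase: for each $i=1,\dots,K-1$, the bound $\|p_{\e,i+1}-p_{\e,i}\|_{L^\infty}\leq \delta\leq \delta_1(T_1)$ lets us apply Proposition \ref{Pr:ControlExact} to go from $p_{\e,i}$ to $p_{\e,i+1}$ in time $T_1$, again with a control in $[0,1]$ thanks to \eqref{Eq:EstimControl} and the two-sided bound $\delta/2\leq p_{\e,i}\leq 1-\delta/2$. After $K$ such steps we reach $p_{\e,K}=z_\theta\equiv\theta$ at time $T_0+KT_1<\infty$, which is exactly controllability to $\theta$ in finite time.

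The only genuine subtlety, rather than an obstacle, is bookkeeping the constraint: one must choose $\delta$ once and for all \emph{before} fixing $K$, because $K=K(\delta)$ in Proposition \ref{Pr:Bifurcation}, and then verify that the single smallness condition $\delta\leq \min\{\delta_1(T_1),\,\delta_1(T_1)/C(T_1)\}$ simultaneously ensures (i) applicability of Proposition \ref{Pr:ControlExact} at every junction and (ii) that each intermediate control satisfies $0\leq u\leq 1$ via \eqref{Eq:EstimControl}, using that the boundary traces $\overline u_i=p_{\e,i}|_{\partial\O}$ themselves lie in $[\delta/2,1-\delta/2]$. I would also note that the controllability time $T_0+KT_1$ is finite but not small, consistent with the remark that constrained controllability cannot hold in arbitrarily short time. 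All the hard analytic work — the perturbation of the path of steady states via the implicit function theorem — has been isolated in Proposition \ref{Pr:Bifurcation}, so this claim is genuinely a short assembly argument.
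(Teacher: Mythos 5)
Your assembly is correct and is exactly the argument the paper intends (the paper offers no separate proof of this claim beyond the three-step outline and the staircase discussion preceding Proposition \ref{Pr:Bifurcation}): static control $u\equiv 0$ to reach a $\delta/2$-neighbourhood of $z_0$, local exact controllability onto $p_{\e,1}$, then $K-1$ staircase steps to $p_{\e,K}=z_\theta$, with the constraints enforced via \eqref{Eq:EstimControl} and the bounds $\delta/2\leq p_{\e,i}\leq 1-\delta/2$, and with $\delta$ fixed before $K=K(\delta)$. The only cosmetic adjustment is the triangle inequality $\|p^u(T_0,\cdot)-p_{\e,1}\|_{L^\infty}\leq \delta/2+\delta=3\delta/2$, so the smallness condition should read $3\delta/2\leq \delta_1(T_1)$ rather than $\delta\leq\delta_1(T_1)$.
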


\begin{center}

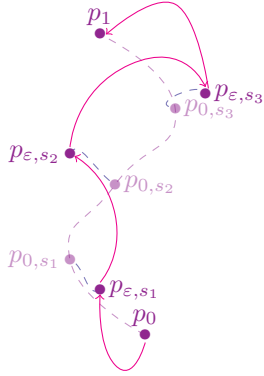
\begin{figure}[H]\begin{center}
\begin{tikzpicture}[scale=2]
\draw[Plum, opacity=0.5,thin,dashed] plot [smooth, tension=1] coordinates{ (2,0) (1.5,0.5) (1.8,1) (2.2,1.5) (1.7,2) };
\draw (2,0) node[anchor=south,Plum]{$p_{0}$};
\draw (2,0) node[Plum]{$\bullet$};

\draw (1.5,0.5) node[opacity=0.5,anchor=east,Plum]{$p_{0,s_1}$};
\draw (1.5,0.5) node[opacity=0.5,Plum]{$\bullet$};

\draw (1.7,0.3) node[opacity=1,anchor=west,Plum]{$p_{\e,s_1}$};
\draw (1.7,0.3) edge [dashed,Periwinkle,out=-150, in=-20]  (1.5,0.5);
\draw (1.7,0.3) node[opacity=1,Plum]{$\bullet$};

\draw (1.8,1) node[opacity=0.5,anchor=west,Plum]{$p_{0,s_2}$};
\draw (1.8,1) node[opacity=0.5,Plum]{$\bullet$};
\draw (1.5,1.2) node[opacity=1,anchor=east,Plum]{$p_{\e,s_2}$};
\draw (1.5,1.2) edge [dashed,Periwinkle,out=-290, in=-210]  (1.8,1);
\draw (1.5,1.2) node[opacity=1,Plum]{$\bullet$};

\draw (2.2,1.5) node[opacity=0.5,anchor=west,Plum]{$p_{0,s_3}$};
\draw (2.2,1.5) node[opacity=0.5,Plum]{$\bullet$};
\draw (2.4,1.6) node[opacity=1,anchor=west,Plum]{$p_{\e,s_3}$};
\draw (2.157,1.52) edge [dashed,Periwinkle,out=-200, in=100]  (2.4,1.6);
\draw (2.4,1.6) node[opacity=1,Plum]{$\bullet$};

\draw (1.7,2) node[opacity=1,anchor=south,Plum]{$p_1$};
\draw (1.7,2) node[opacity=1,Plum]{$\bullet$};

\draw[->,magenta,thin] plot [smooth, tension=1.5] coordinates{ (2,-0.05) (1.8,-0.2) (1.7,0.27) };
\draw[->,magenta,thin] plot [smooth, tension=1.2] coordinates{ (1.7,0.35) (1.8,0.8) (1.53,1.19) };

\draw[->,magenta,thin] plot [smooth, tension=1.5] coordinates{ (1.5,1.25) (1.9,1.8) (2.4,1.65) };
\draw[->,magenta,thin] plot [smooth, tension=0.8] coordinates{ (2.42,1.64) (2.2,2.2) (1.74,2) };




\end{tikzpicture}\end{center}
\caption{In dark purple, the perturbed steady states, linked to the unperturbed steady states. We do not know whether or not a continuous path of steady states linking these new states exists; however, such points enable us to do exact controllability again and to apply the \color{black}staircase method.}\end{figure}\end{center}


We  strongly rely on the explicit construction of the path of steady-states for $\e=0$ in \cite{BRZ,PoucholTrelatZuazua}. \paragraph{Known constructions of a path of steady-states ($\e=0$)}

For the multi-dimensional case, it has been shown in \cite{BRZ} that one can construct a path of steady-states linking $z_0\equiv 0$ to $z_\theta\equiv \theta$ in the following way: let $\O$ be the domain  where the equation is set  and let $R_\O>0$ be such that 
$$\O\subseteq \mathbb B(0;R_\O).$$
The path of steady state is defined as follows in \cite{BRZ}: first of all, if uniqueness holds for 
$$\begin{cases}
-\Delta p=f(p)&\text{ in }\B(0;R_\O)\,,
\\ p=0&\color{black}\text{ on }\partial \B(0;R_\O).\end{cases}$$ then, for $\eta>0$ small enough, there exists a unique solution to
$$\begin{cases}
-\Delta p_\eta=f(p_\eta)&\text{ in }\mathbb B(0;R_\O)\\
p_\eta=\eta &\text{ on }\partial \mathbb B(0;R_\O).\end{cases}$$ Define, for any $s\in [0,1]$,  $p^{0,s}$ as the unique solution to the problem
\begin{equation}\label{Eq:ConstructionBRZ}
\left\{\begin{array}{ll}
-\Delta p^{0,s}=f\left(p^{0,s}\right)\text{ in }\mathbb B(0;R_\Omega)\,,&
\\ p^{0,s}(0)=s\theta+(1-s)p_\eta(0)\,,&
\\p^{0,s}\text{ is  radial.}&
\end{array}
\right.
\end{equation}
Using the polar coordinates, the authors prove that the equation above has a unique solution, and that the map $s\mapsto p^{0,s}$ is continuous in the $\mathscr C^0$ topology. Using energy type methods, they prove that this solution is admissible, i.e that for any $0<t_0<1$,
$$0<\inf_{s \in [t_0;1]\,, x\in \mathbb B(0;R)} p^{0,s}(x)\leq \sup_{s \in [0,1]\,, x\in \mathbb B(0;R)} p^{0,s}(x)<1.$$ This  gives a path on $\mathbb B(0;R_\O)$. To construct the path on $\O$, it suffices to set
$$\tilde p^{0,s}:=\left.p^{0,s}\right|_{\O}.$$ Furthermore, by elliptic regularity or by studying the equation in polar coordinates, we see that, for every $s\in [0,1]$, $$p^{0,s}\in \mathscr C^{2,\alpha}(\mathbb B(0;R_\O))$$ for any $0<\alpha<1$.
Instead of perturbing  the functions $\tilde p^{0,s}\in \mathscr C^{2,\alpha}(\O)$, we will perturb the functions $p^{0,s}\in \mathscr C^2\big(\mathbb B(0;R_\O)\big)$.
\begin{notation}

Henceforth, the parameter $R_\O>0$ is fixed and, for any $s\in [0,1]$,  $p^{0,s}$ is the unique solution to \eqref{Eq:ConstructionBRZ}.\end{notation}
\paragraph{Proof of Proposition \ref{Pr:Bifurcation}}
\begin{proof}[Proof of Proposition \ref{Pr:Bifurcation}]
Let $\delta>0$. Let $\{s_i\}_{i=0,\dots,K}$ be a sequence of points such that 

\begin{equation}\label{Eq:P0}
0<p^{0,s_0}\leq ||p^{0,s_0}||_{L^\infty}\leq \frac\delta2,\end{equation} and
\begin{equation}\label{Eq:P1}
\forall i \in \{0,\dots,K-1\}\,, \left|\left|p^{0,s_i}-p^{0,s_{i+1}}\right|\right|_{L^\infty}\leq \frac{\delta}{4}.\end{equation}
We define, for any $i=1,\dots,K$,
$$p_{0,i}=p^{0,s_i}.$$Fix a parameter $\alpha \in (0;1)$.
We define a one-parameter family of mappings as follows: for any $i=1,\dots,K$, let  
$$\mathscr F_i:\left\{\begin{array}{ll}
\mathscr C^{2,\alpha}\left(\mathbb B(0;R_\O)\right)\times [-1;1]&\rightarrow \mathscr C^{0,\alpha}\left(\mathbb B(0;R_\O)\right)\times \mathscr C^0\left({\partial\mathbb B(0;R_\O)}\right),
\\&
\\(p,\e)&\mapsto\left(-\n \cdot\left(e^{\e n}\n p\right)-f(p)e^{\e n}\,, p|_{{\partial\mathbb B(0;R_\O)}}-p_{0,i}|_{{\partial\mathbb B(0;R_\O)}}\right).\end{array}\right.$$
We note that
$$\forall i\in \{1,\dots,K\}\,, \mathscr F_i(p_{0,i},0)=0.$$ 
We wish to apply the implicit function theorem, which is permitted provided the operator 
$$\mathscr L_i:\xi\mapsto -\Delta \xi-f'(p_{0,i})\xi$$with Dirichlet boundary conditions is invertible. If this is the case we know that there exists a continuous path \color{black}$\e\mapsto p_{\e,i}$ (for $\e\in [0;\e_0)$, where $\e_0>0$ is small enough) starting from $p_{0,i}$ such that 
$$\mathscr F_i(p_{\e,i}, \e)=0 \text{ for any $\e\in[ 0;\e_0)$}.$$ \color{black}
Denoting, for any differential operator $\mathscr A$ its spectrum by $\Sigma\left(\mathscr A\right)$,  this invertibility property amounts, thanks to elliptic regularity (see \cite{GilbargTrudinger}) to requiring that 
\begin{equation}\label{Eq:NonResonance}\forall i \in \{1,\dots,K\}\,, 0\notin \Sigma(\mathscr L_i).\end{equation} 
If Condition \eqref{Eq:NonResonance} is satisfied, then $p_{0,i}$ perturbs into $p_{\e,i}$ and we can define 
$$\tilde p_{\e,i}:=\left. p_{\e,i}\right|_\O$$ as a suitable sequence of steady states in $\O$. Since we are working with a finite number of points, taking $\e$ small enough guarantees
$$\forall i=1,\dots,K\,, ||p_{\e,i}-p_{0,i}||_{L^\infty}\leq \frac\delta4$$ and we would then have, for any $i=1,\dots,K-1$, 
\begin{align*}
||p_{\e,i+1}-p_{\e,i}||_{L^\infty}&\leq ||p_{\e,i+1}-p_{0,i+1}||_{L^\infty}
+||p_{0,i}-p_{\e,i}||_{L^\infty}
+||p_{0,i+1}-p_{0,i}||_{L^\infty}
\\&\leq \frac\delta4+\frac\delta4+\frac\delta2=\delta,
\end{align*}
which is what we require of the sequence.

Let us define the set of resonant points (i.e the points where \eqref{Eq:NonResonance} does not hold) as 
$$\Gamma:=\left\{j\in \{1,\dots,K\}\,, 0\in \Sigma(\mathscr L_j)\right\}.$$
We note that $1\notin \Gamma$ because the first eigenvalue of 
$$\mathscr L_1=-\Delta -f'(0)$$ is positive: indeed, since $f'(0)<0$ \textcolor{black}{and $\|p_{0,1}\|_{L^\infty}$ is small}, this first eigenvalue is bounded from below by the first Dirichlet eigenvalue of the ball $\mathbb B(0;R_\O)$. Hence $1\notin \Gamma$.
We proceed as follows:
\begin{enumerate}
\item Whenever $i\notin \Gamma$, we can apply the implicit function Theorem to obtain the existence of a continuous path $p_{\e,i}$ starting from $p_{0,i}$ such that
$$p_{\e,i}|_{{\partial\mathbb B(0;R_\O)}}=p_{0,i}|_{{\partial\mathbb B(0;R_\O)}}\,, \mathcal F(p_{\e,i},\e)=0,$$
so that, taking $\e$ small enough, we can ensure that, for any $i\notin \Gamma$, 
$$||p_{\e,i}-p_{0,i}||_{L^\infty}\leq \frac\delta4.$$
\item Whenever $i\in \Gamma$, we apply the implicit function theorem on a larger domain $\mathbb B(0;R_\O+\tilde \delta)$, $\tilde \delta>0$.\begin{center}

\begin{center}\begin{figure}[H]
\begin{center}
\begin{tikzpicture}[scale=1.5]
\draw[->] (0,0)--(4,0);
\draw[->] (0,0)--(0,3);
\draw[dashed,gray] (3,0)--(3,2);
\draw[dashed,gray] (3.5,0)--(3.5,2);
\draw[Plum] plot [smooth, tension=0.5] coordinates{(0,2.7)(0.1,2.65)(0.5,2)(1,2.2)(1.5,1.5) (2.2,1.7) (2.9,0.6)(3,0.5)};

\draw[Plum,dashed] plot [smooth, tension=0.5] coordinates{(3,0.48)(3.1,0.4)(3.5,0.2)};
\draw[blue]plot[smooth, tension=0.5]coordinates{(0,2.75) (0.1,2.67) (0.5,1.8) (1,2.3)(1.5,1.4)(2.2,1.7)(3,0.6)(3.5,0.2)};

\draw (3.5,0.2) node[blue]{$\bullet$};

\draw (3.5,0) node[gray]{$\bullet$};
\draw (3.8,0) node[anchor=north, gray]{$R_\O+\tilde \delta$};

\draw (3,0) node[gray]{$\bullet$};
\draw (3,0) node[anchor=north, gray]{$R_\O$};
\draw (4,0) node[anchor=south, black]{$x$};
\draw (0,3) node[anchor=east, black]{$y$};

\end{tikzpicture}\end{center}
\caption{The initial solution $p_{0,i}$ on $\mathbb B(0;R_\O)$ is continued into a solution on $\mathbb B(0;R_\O+\tilde \delta)$, and we apply the implicit function theorem on this domain to obtain the blue curve.}
\end{figure}
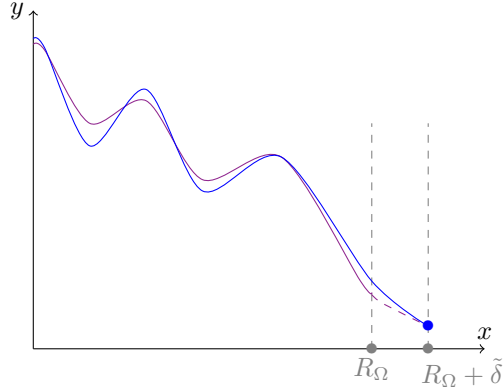\end{center}\end{center}

Let, for any $i\in \Gamma$, $\lambda_i(k,R_\O)$ be the $k$-th eigenvalue of $\mathscr L_i$ with Dirichlet boundary conditions  on $\mathbb B(0;R_\O).$  Let, for any $i\in \Gamma$,
$$k_i:=\sup\left\{k\,, \lambda_i(k,R_\O)=0\right\}.$$
Obviously, there exists $M>0$ such that $k_i\leq M$ uniformly in $i$, since $\lambda_i(k,R_\O)\to \infty$ as $k\to \infty$. We then invoke the monotonicity of the eigenvalues with respect to the domain. Let, for any $\tilde \delta$, $ p_{0,i}^{\tilde \delta}$ be the extension of $p_{0,i}$ to $\mathbb B(0;R_\O+\tilde \delta)$; this is possible given that $p_{0,i}$ is given by the radial equation \eqref{Eq:ConstructionBRZ}.

Let $\tilde{\mathcal L}_i:y\mapsto -\Delta y-f'(p_{0,i}^{\tilde \delta})y$ and $\tilde \lambda_i(\cdot,R_\O+\tilde \delta)$ be its eigenvalues. By the min-max principle of Courant (see \cite{henrot2006}) we have, for any $k\in \N$ and any $\tilde \delta>0$,
$$\tilde\lambda_i(k,R_\O+\tilde\delta)<\lambda_i(k,R_\O).$$
Hence, for every $i\in \Gamma$, there exists $\tilde\delta_i>0$ small enough so that, for any $0<\tilde \delta<\tilde \delta_i$, 
$$0\notin \Sigma\left(\tilde{\mathcal L}_i\right).$$ We then choose $\dbtilde \delta=\min_{i\in \Gamma}\tilde \delta_i$ and apply the implicit function theorem on $\mathbb B\left(0;R_\O+\frac{\dbtilde\delta}2\right)$. This gives the existence of $\tilde\e>0$ such that, for any $\e<\tilde \e$ and any $i\in \Gamma$, there exists a solution $p_{\e,i}^{\dbtilde\delta}$ of 
\begin{equation}
\left\{\begin{array}{ll}
-\Delta p_{\e,i}^{\dbtilde\delta}-\e \langle \n n\,, \n p_{\e,i}^{\dbtilde\delta}\rangle=f(p_{\e,i}^{\dbtilde\delta})\text{ in }\mathbb B(0;R_\O+\frac{\dbtilde\delta}2)\,,&
\\&
\\p_{\e,i}^{\dbtilde\delta}=p_{0,i}|_{\partial \mathbb{B}(0;R_\O+\frac{\dbtilde\delta}2)}\,,&
\\&
\\p_{\e,i}\underset{\e\to 0}{\overset{\mathscr C^0(\mathbb B(0;R_\O+\frac{ \dbtilde\delta}2))}\rightarrow}p_{0,i}^{\frac{\dbtilde\delta}2}
\end{array}\right.\end{equation}
Furthermore, 
$$p_{0,i}^{ \delta}|_{\partial \mathbb{B}(0;R_\O)}\underset{ \delta \to 0}\to p_{0,i}|_{{\partial\mathbb B(0;R_\O)}}.$$
Thus, by choosing $\dbtilde\delta$ small enough, we can guarantee that, by defining 
$$\tilde p_{\e,i}:=p_{\e,i}^{\dbtilde\delta}|_{\mathbb B(0;R_\O)}$$ we have for every $\e$ small enough
$$||\tilde p_{\e,i}-p_{0,i}||_{L^\infty}\leq \frac\delta4.$$
We note that $\tilde p_{\e,i}$ does not satisfy, on $\partial \mathbb B(0;R_\O)$ the same boundary condition as $p_{0,i}$, but this would be too strong a requirement.
\end{enumerate}
This concludes the proof of Proposition \ref{Pr:Bifurcation} and, thus, of Theorem \ref{Th:SHSlow}.
\end{proof}

\color{black}
\section{Proof of Theorem \ref{NewTheo}: blocking phenomenon}\label{Proof2}
We split the proof of this Theorem in two parts: the first one is devoted to the blocking phenomenon towards 1, the second to the blocking phenomenon towards 0.

\subsection{Proof of Theorem \ref{NewTheo}: Blocking phenomenon towards 1} 
We fix our drift $N$, as well as the constant $C$ given by Assumption \eqref{AssumptionDrift}. We define the first relevant equation:
\begin{equation}\label{NewEq1}
\begin{cases}
-\Delta p-\frac2\sigma \langle \nabla p,\frac{\nabla N}N\rangle=f(p)&\text{ in }\Omega=\mathbb B(0;R);
\\p=1&\text{ on }\partial \O.
\end{cases}\end{equation}

Non-controllability to 1 is implied by the existence of non-trivial solutions $p$ satisfying $0\leq p\leq 1$ to \eqref{NewEq1}; such solutions are called admissible. Thus the blocking phenomenon towards 1 of Theorem \ref{NewTheo} is an immediate consequence of the following Lemma:
\begin{lemma}\label{NewLem1}
Assume $N$ satisfies \eqref{AssumptionDrift}. There exists $\sigma_{N,1}>0$ such that, for any $\sigma \in (0;\sigma_{N,1})$, there exists a non-trivial admissible solution of \eqref{NewEq1}.
\end{lemma}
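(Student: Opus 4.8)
The plan is to construct a non-trivial admissible solution of \eqref{NewEq1} by a sub- and super-solution argument, exploiting the strong inward push encoded in \eqref{AssumptionDrift}. The constant super-solution $\overline p\equiv 1$ is immediate since $f(1)=0$. The heart of the matter is to produce a non-trivial sub-solution $\underline p$ with $0\leq \underline p\leq 1$, $\underline p=0$ (or $\underline p\leq 1$) on $\partial\O$, and $\underline p\not\equiv 0$; once such a $\underline p$ is in hand, the monotone iteration scheme between $\underline p$ and $\overline p\equiv 1$ produces a solution $p$ with $\underline p\leq p\leq 1$, hence admissible, and $p\not\equiv 0$ because $p\geq \underline p$; one must then separately rule out $p\equiv 1$, which follows because any solution $p\not\equiv 1$ of \eqref{NewEq1} must attain a minimum at an interior point $x_0\in\O$ where $-\Delta p(x_0)\leq 0$ and $\nabla p(x_0)=0$, forcing $f(p(x_0))\leq 0$, i.e. $\min p\leq\theta<1$, so the solution is genuinely non-trivial. (Alternatively, one builds $\underline p$ taking values above $\theta$ somewhere, which automatically excludes the trivial solutions.)

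To build the sub-solution, I would work in radial coordinates, writing the operator as $-\Delta p - \frac{2}{\sigma}\frac{\partial_r N}{N}\partial_r p$, and use \eqref{AssumptionDrift}, $\frac{\partial_r N}{N}\leq -Cr$, so that $-\frac{2}{\sigma}\frac{\partial_r N}{N}\partial_r p \geq \frac{2C r}{\sigma}\partial_r p$ whenever $\partial_r p\geq 0$. The idea is that for $\sigma$ small the drift term dominates and behaves like a confining potential pushing mass toward the interior — reminiscent of the mechanism behind Theorem \ref{Th:Fin} and the weighted eigenvalue $\lambda_1(\R^d,N)$ — so that one can find an explicit radially increasing profile $\underline p$ supported near the boundary region, or more precisely a profile that rises from $0$ at $r=0$... no: the drift pushes outward-to-boundary behavior in the $p=1$ problem. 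Let me reconsider: with $p=1$ on $\partial\O$, I would instead seek $\underline p$ radial, increasing in $r$, equal to $1$ near $\partial\O$ and taking a value $>\theta$ on a large portion of $\O$; the sub-solution inequality $-\Delta\underline p - \frac{2}{\sigma}\frac{\partial_r N}{N}\partial_r\underline p \leq f(\underline p)$ should hold because on the region where $\underline p>\theta$ we have $f(\underline p)$ controlled, while the favorable sign $\frac{2Cr}{\sigma}\partial_r\underline p>0$ on the left makes the inequality harder, not easier — so I would instead choose $\underline p$ to have $\partial_r\underline p\leq 0$ where needed, or split $\O$ into an inner core where $\underline p$ is a suitable constant slightly above $\theta$ and a boundary layer. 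The cleanest route is: pick $\alpha\in(\theta,1)$ with $\int_0^\alpha f>0$-type control, take $\underline p$ to be a mollified version of $\alpha\mathbf 1_{\B(0;R')}+(\text{ramp to }1)$, and verify the distributional sub-solution inequality using that the drift term has a sign that helps on the ramp (where $\partial_r\underline p>0$, the term $\frac{2Cr}{\sigma}\partial_r\underline p$ is positive and large, but it appears with the correct sign on the side of $f$...).

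The main obstacle, and where I expect to spend the real effort, is precisely this sign bookkeeping: the constraint $0\leq u\leq 1$ means we cannot overshoot past $1$, so the sub-solution must stay below $1$, and the confining drift — while intuitively "trapping" a population in the interior — enters the sub-solution inequality with a sign that must be carefully matched against the bistable term $f$. I anticipate that the correct construction uses a sub-solution that is a small bump (not close to $1$) localized where the effective potential $-\frac{2}{\sigma}\ln N$ has a well, combined with the observation that for $\sigma\to 0$ the relevant linearized operator $-\Delta - \frac{2}{\sigma}\frac{\partial_r N}{N}\partial_r$ has arbitrarily small principal Dirichlet eigenvalue on any fixed sub-domain (this is the $\sigma$-smallness that defines $\sigma_{N,1}$), so a standard bifurcation/sub-solution argument from $0$ produces a small non-trivial solution of the $p=0$ problem first, which one then compares. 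Honestly, given Remark 4 in the statement warns that "the existence of non-trivial steady-states associated with the homogeneous boundary value 1 will rest upon a very fine analysis of the phase plane portrait," I expect the authors' actual proof to abandon sub/super-solutions for the $p=1$ case and instead do a shooting argument in the $(p,p')$ phase plane using the energy $\mathscr E(p,p')=\frac12 (p')^2+F(p)$ with the drift acting as friction/anti-friction of size $O(1/\sigma)$; the key estimate would be that for $\sigma$ small the trajectory starting with $p(0)=p_0<\theta$, $p'(0)=0$ cannot reach $p=1$ before $r=R$ unless it does so tangentially, and an intermediate-value argument on $p_0\in(0,\theta)$ then yields a trajectory hitting $p=1$ exactly at $r=R$, which is the desired non-trivial solution.
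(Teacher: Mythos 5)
Your primary plan (a non-trivial sub-solution $\underline p$ plus the constant super-solution $\overline p\equiv 1$) has a structural gap that no amount of sign bookkeeping will fix: the monotone iteration between $\underline p$ and $1$ may simply converge to the trivial solution $z_1\equiv 1$, and nothing in the scheme prevents this. Your attempt to ``rule out $p\equiv 1$ separately'' is circular, since the interior-minimum argument presupposes $p\not\equiv 1$. For the boundary value $1$ the obstruction is exactly the one flagged in the paper's remarks: $z_1$ is the global minimiser of the associated energy, so a non-trivial solution cannot be detected from below; what is needed is a non-trivial \emph{super}-solution, i.e.\ a function equal to $1$ on $\partial \O$, dipping strictly below $1$ inside, and satisfying the super-solution inequality. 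This is precisely what the paper manufactures.

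Your fallback guess --- a shooting argument in the $(p,p')$ plane with the energy $\tfrac12 (p')^2+F(p)$ --- is indeed the paper's route, but two essential ingredients are missing from your sketch. First, Assumption \eqref{AssumptionDrift} does not make $N$ radial, so the PDE does not reduce to an ODE for the given drift; the paper first replaces $N$ by the Gaussian $\mathcal N_C(x)=e^{-\frac{C}{2}\Vert x\Vert^2}$, runs the shooting argument there (Lemma \ref{NewLem2}), and only then transfers the conclusion to $N$ by observing that the radial, non-decreasing Gaussian solution $\eta_1$ is a super-solution of \eqref{NewEq1} because \eqref{AssumptionDrift} and $\partial_r\eta_1\geq 0$ give the right inequality on the drift term (Lemma \ref{NewLem3}); this is where sub/super-solutions legitimately enter, with $0$ as the sub-solution. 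Second, the ``key estimate'' you propose (that the trajectory cannot reach $p=1$ before $r=R$ except tangentially) is not the mechanism: the paper shows that for $\alpha=p(0)\in(0;\theta)$ small the outward drift forces $p'$ to stay bounded below by a uniform $\underline m>0$ past $r_{\alpha,\sigma,\theta}$ and then to blow up (Claim \ref{NCL3}), so the trajectory hits $1$ transversally at a finite radius $R(\sigma,1,\alpha)$; the map $\alpha\mapsto R(\sigma,1,\alpha)$ is non-increasing and tends to $+\infty$ as $\alpha\to 0$, so every radius above $R_\sigma^*$ is attained, and finally $R_\sigma^*\to 0$ as $\sigma\to 0$ (Claim \ref{NCL5}), which is what produces the threshold $\sigma_{N,1}$. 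None of these quantitative steps appear in your plan, so as written it does not constitute a proof.
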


\subsubsection{Reduction to the Gaussian case}

The key argument in this proof is the use of a comparison principle, which will enable us to work only with Gaussian drifts, that is, with drifts of the form
$$\mathcal N_C(x):=e^{-\frac{C}2\Vert x\Vert^2}.$$  Here, $C$ is the constant given by Assumption \eqref{AssumptionDrift}.
Let us then fix this drift.

\paragraph{Blocking phenomenon towards 1 in the Gaussian case} Let us first consider the equation

\begin{equation}\label{NewEqGaussian1}
\begin{cases}
-\Delta \eta-\frac2\sigma \langle \nabla \eta,\frac{\nabla \mathcal N_C}{\mathcal N_C}\rangle=f(\eta)&\text{ in }\Omega=\mathbb B(0;R);
\\\eta=1&\text{ on }\partial \O.\end{cases}\end{equation}

The first result to be established is the following:
\begin{lemma}\label{NewLem2}
There exists $\overline\sigma_C>0$ such that, for any $\sigma \in (0;\overline \sigma_C)$, there exists a non-trivial radially symmetric solution $\eta_{C,1,\sigma}$ of \eqref{NewEqGaussian1}. This solution is radially symmetric and non-decreasing, and satisfies $0\leq \eta_{C,1,\sigma}\leq 1$.
\end{lemma}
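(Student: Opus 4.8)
The plan is to prove Lemma \ref{NewLem2} by a shooting argument on the radial ODE. In polar coordinates, a radial solution $\eta = \eta(r)$ of \eqref{NewEqGaussian1} satisfies
\[
-\eta'' - \frac{d-1}{r}\eta' - \frac2\sigma \frac{\mathcal N_C'(r)}{\mathcal N_C(r)}\eta' = f(\eta), \qquad \eta'(0) = 0, \quad \eta(R) = 1,
\]
and since $\mathcal N_C(x) = e^{-\frac C2\|x\|^2}$ we have $\mathcal N_C'/\mathcal N_C = -Cr$, so the drift term becomes $+\frac{2C r}{\sigma}\eta'$, i.e. the equation is
\[
-\eta'' - \left(\frac{d-1}{r} + \frac{2Cr}{\sigma}\right)\eta' = f(\eta).
\]
First I would set up the shooting problem from the center: for a parameter $a \in (0,1)$, let $\eta_a$ solve this ODE with $\eta_a(0) = a$, $\eta_a'(0) = 0$. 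The goal is to find $a$ (depending on $\sigma$) such that $\eta_a$ stays in $[0,1]$, is non-decreasing, and reaches the value $1$ exactly at $r = R$. I would first record elementary qualitative facts: if $a \in (0,\theta)$ then near $r=0$ we have $f(\eta_a) < 0$, and the strong drift $\frac{2Cr}{\sigma}$ forces $\eta_a$ to be monotone as long as it stays below $\theta$ — more precisely, writing the equation in the form $-(\mu \eta_a')' = \mu f(\eta_a)$ with integrating factor $\mu(r) = r^{d-1}e^{Cr^2/\sigma}$, one gets $\mu(r)\eta_a'(r) = -\int_0^r \mu(s) f(\eta_a(s))\,ds$, so the sign of $\eta_a'$ is controlled by the sign of $f(\eta_a)$ in an averaged sense.

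The heart of the argument is a continuity/intermediate-value step in the shooting parameter $a$. I would show: (i) for $a$ close to $1$, $\eta_a$ reaches $1$ before $r=R$ (or overshoots), because starting near the stable-from-above value $1$ with the reaction pushing up; and (ii) for $a$ small (close to $0$), $\eta_a$ cannot reach $1$ by $r = R$ when $\sigma$ is small enough — here the key point is that when $a < \theta$, $f(\eta_a) \le 0$ while $\eta_a < \theta$, so $\eta_a$ is non-increasing on that range and can only start growing after... wait, that would make it decrease, so actually one wants $a$ slightly below but the growth of $\eta_a$ is driven purely by how fast the solution can climb, and the strong drift term $\frac{2Cr}{\sigma}\eta'$ — with the $1/\sigma$ factor — acts as a large damping that, combined with the boundary layer structure, prevents the solution from rising from a value near $0$ to $1$ over the fixed interval $[0,R]$ unless it starts high. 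By the intermediate value theorem applied to the (continuous in $a$) "first hitting radius of the value $1$", there is an $a = a(\sigma)$ for which $\eta_a$ hits $1$ exactly at $R$. One then checks this $\eta_{C,1,\sigma} := \eta_a$ is non-decreasing: on $[0, r_0]$ where $\eta_a \le \theta$ it must actually be checked carefully — I expect the correct statement is that $\eta_a$ first dips (if $a < \theta$) then rises, so one needs $a \ge \theta$, or rather one picks the solution that is genuinely monotone by a more careful selection among the admissible shooting parameters. Non-triviality ($\eta_{C,1,\sigma} \not\equiv 1$) follows since at an interior minimum point $x_0$ we'd need $f(\eta(x_0)) \le 0$ hence $\eta(x_0) \le \theta < 1$.

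Alternatively — and this is probably the cleaner route the authors take — one can use a variational/sub-supersolution argument: $\overline\eta \equiv 1$ is a supersolution, and one constructs a non-trivial subsolution $\underline\eta$ with $\underline\eta = 1$ on $\partial\O$ and $\underline\eta \not\equiv 1$, exploiting that the drift is strong; then a solution exists between them by monotone iteration, and it is non-trivial provided the subsolution genuinely dips below $1$ somewhere and the supersolution $1$ is not itself obtained as the limit. The subsolution would be built by taking a function that equals $1$ near $\partial\O$ and drops to some small value in the interior, and verifying the differential inequality using that the coefficient $\frac{2Cr}{\sigma}$ is large: the term $-\frac2\sigma \frac{\nabla\mathcal N_C}{\mathcal N_C}\cdot\nabla\underline\eta = \frac{2Cr}{\sigma}\underline\eta'$ is positive wherever $\underline\eta' > 0$ (i.e. in the transition region) and large, which helps the subsolution inequality $-\Delta\underline\eta - \frac{2Cr}{\sigma}\underline\eta' \le f(\underline\eta)$. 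The main obstacle in either approach is the same: one must show the non-trivial solution does not collapse back to $z_1 \equiv 1$ — equivalently, that one can trap it away from $1$ uniformly — and since $z_1$ is (as the remark after Theorem \ref{NewTheo} stresses) the global minimizer of the energy with boundary data $1$, this cannot be done by energy comparison and genuinely requires the ODE phase-plane analysis; so I would expect the shooting argument, carefully tracking the first radius at which $\eta_a$ reaches $1$ as a function of $a$ and $\sigma$, to be the technical core, with the role of smallness of $\sigma$ being to guarantee that a solution starting near $1$ at the center reaches $1$ again at $r=R$ without leaving $[0,1]$, via the boundary-layer effect of the stiff drift term.
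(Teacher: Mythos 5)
Your overall strategy --- a shooting argument on the radial ODE --- is indeed the route the paper takes, but your execution contains a sign error that inverts the entire mechanism. With $\mathcal N_C(x)=e^{-\frac C2\Vert x\Vert^2}$ one has $-\frac2\sigma\frac{\mathcal N_C'}{\mathcal N_C}\eta'=+\frac{2Cr}{\sigma}\eta'$, so the radial equation is $-\eta''-\frac{d-1}{r}\eta'+\frac{2Cr}{\sigma}\eta'=f(\eta)$ (compare \eqref{ODE1}), not $-\eta''-\bigl(\frac{d-1}{r}+\frac{2Cr}{\sigma}\bigr)\eta'=f(\eta)$ as you wrote; correspondingly the integrating factor is $\mu(r)=r^{d-1}e^{-Cr^2/\sigma}$, not $r^{d-1}e^{+Cr^2/\sigma}$. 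With the correct sign the stiff drift is not ``a large damping that prevents the solution from rising'': on the contrary, $r\mapsto e^{-Cr^2/\sigma}r^{d-1}\eta_a'(r)$ is non-decreasing as long as $f(\eta_a)\leq 0$, so for $a\in(0;\theta)$ the solution is increasing from $r=0$ on (there is no initial ``dip'', hence no need to restrict to $a\geq\theta$), its derivative eventually blows up, and it reaches the value $1$ at a finite radius. This is the content of Claims \ref{NCL1}--\ref{NCL4}, and it is the exact opposite of the picture in your step (ii). Your step (i) is also off: for $a\in(\theta;1)$ one has $f(a)>0$, hence $\eta_a''(0)<0$ and the solution initially \emph{decreases} rather than climbing to $1$.

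Beyond the sign issue, two genuine gaps remain. First, the lemma asserts existence for \emph{every} $\sigma$ in an interval $(0;\overline\sigma_C)$ with the radius $R$ fixed in advance; an intermediate-value argument in the shooting parameter would at best produce a solution on a ball of \emph{some} radius depending on $\sigma$. The paper closes this with two steps absent from your proposal: the monotonicity in $\sigma$ (Claim \ref{ClIntermediaire}, a sub/supersolution comparison between two drift intensities) and, crucially, the quantitative limit $R_\sigma^*\to 0$ as $\sigma\to 0$ (Claim \ref{NCL5}), which is the technical core of the proof. Second, your ``cleaner alternative'' does not work as stated: a subsolution $\underline\eta\leq 1$ with $\underline\eta\not\equiv 1$ and boundary value $1$ does not rule out that the solution obtained by monotone iteration is $z_1\equiv 1$ itself. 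To force non-triviality one needs a \emph{supersolution} that equals $1$ on $\partial\O$, stays $\leq 1$, and is $\not\equiv 1$ --- and producing such a supersolution is precisely what the shooting construction accomplishes (it is then reused in Lemma \ref{NewLem3} to pass from the Gaussian drift to a general drift satisfying \eqref{AssumptionDrift}). So the phase-plane analysis cannot be bypassed, and the version of it you sketch would have to be redone with the correct orientation of the drift.
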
We prove it in the next paragraph. Throughout the rest of this section, such a  \textbf{$\overline\sigma_C>0$ is fixed.}

The link with Lemma \ref{NewLem1} is addressed in the following Lemma;
\begin{lemma}\label{NewLem3}
Lemma \ref{NewLem2} implies Lemma \ref{NewLem1}.\end{lemma}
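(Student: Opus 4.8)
The plan is to deduce Lemma \ref{NewLem1} from Lemma \ref{NewLem2} by a comparison (sub-/super-solution) argument, using the Gaussian solution $\eta_{C,1,\sigma}$ as a subsolution for the original equation \eqref{NewEq1}. First I would rewrite both equations in divergence form: multiplying \eqref{NewEq1} by $N^{2/\sigma}$ turns it into $-\nabla\cdot\bigl(N^{2/\sigma}\nabla p\bigr)=N^{2/\sigma}f(p)$, and similarly \eqref{NewEqGaussian1} becomes $-\nabla\cdot\bigl(\mathcal N_C^{2/\sigma}\nabla\eta\bigr)=\mathcal N_C^{2/\sigma}f(\eta)$. The key observation is that Assumption \eqref{AssumptionDrift}, $\partial_r N/N\le -Cr$, says precisely that $\partial_r\bigl(\ln N-\ln\mathcal N_C\bigr)\le 0$, i.e. $N/\mathcal N_C$ is radially non-increasing; normalising so that $N=\mathcal N_C$ at the origin (or bounding $N\le \mathcal N_C$ up to a multiplicative constant, harmless by the scaling invariance $N\mapsto\lambda N$ noted in the paper), we get that $\eta_{C,1,\sigma}$ — which is radial, non-decreasing, between $0$ and $1$ — is a subsolution of \eqref{NewEq1}. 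Indeed one checks that $-\Delta\eta_{C,1,\sigma}-\frac2\sigma\langle\nabla\eta_{C,1,\sigma},\nabla\ln N\rangle = f(\eta_{C,1,\sigma}) + \frac2\sigma\langle\nabla\eta_{C,1,\sigma},\nabla\ln\mathcal N_C-\nabla\ln N\rangle$, and the extra term is $\frac2\sigma\,\partial_r\eta_{C,1,\sigma}\cdot\partial_r(\ln\mathcal N_C-\ln N)\le 0$ because $\partial_r\eta_{C,1,\sigma}\ge 0$ and $\partial_r\ln\mathcal N_C-\partial_r\ln N=-Cr-\partial_r\ln N\le 0$; hence $-\Delta\eta_{C,1,\sigma}-\frac2\sigma\langle\nabla\eta_{C,1,\sigma},\nabla\ln N\rangle\le f(\eta_{C,1,\sigma})$.

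Next I would exhibit the constant function $\overline p\equiv 1$ as a supersolution of \eqref{NewEq1}: $-\Delta 1-\frac2\sigma\langle\nabla 1,\nabla\ln N\rangle=0=f(1)$, so it is in fact a solution, and it satisfies the boundary condition $\overline p=1$ on $\partial\O$. Since $\eta_{C,1,\sigma}\le 1=\overline p$ on $\partial\O$ and $\eta_{C,1,\sigma}\le 1$ everywhere, the ordered pair $(\underline p,\overline p)=(\eta_{C,1,\sigma},1)$ satisfies the hypotheses of the standard sub-/super-solution theorem for semilinear elliptic equations (applied to the uniformly elliptic operator $-N^{2/\sigma}\Delta-\frac2\sigma N^{2/\sigma}\nabla\ln N\cdot\nabla$, which is regular since $N\in\mathscr C^1$ and $N>0$ on the compact ball). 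This yields a solution $p$ of \eqref{NewEq1} with $\eta_{C,1,\sigma}\le p\le 1$ on $\O=\mathbb B(0;R)$. Taking $\sigma_{N,1}:=\overline\sigma_C$ then gives, for every $\sigma\in(0;\sigma_{N,1})$, such a solution.

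It remains to argue that $p$ is non-trivial, i.e. $p\not\equiv 1$. This is where the qualitative information from Lemma \ref{NewLem2} is essential: $\eta_{C,1,\sigma}$ is non-trivial, and since it is radial, non-decreasing and equals $1$ on $\partial\mathbb B(0;R)$, being non-trivial forces $\eta_{C,1,\sigma}(0)<1$ (otherwise monotonicity would make it $\equiv 1$). Hence $p(0)\le$ something $<1$? — not immediately, since $p\ge\eta_{C,1,\sigma}$ only bounds $p$ from below. So the cleaner route is to note that $\overline p\equiv 1$ is itself a supersolution but we should instead run the monotone iteration starting from the subsolution $\eta_{C,1,\sigma}$: the resulting minimal solution $p$ between $\eta_{C,1,\sigma}$ and $1$ then satisfies $p\ge\eta_{C,1,\sigma}$, and I would argue $p\not\equiv1$ by contradiction — if $p\equiv 1$ then by the strong maximum principle applied to $1-p\ge 0$ solving a linear equation $-\Delta(1-p)-\frac2\sigma\nabla\ln N\cdot\nabla(1-p)=c(x)(1-p)$ with $c(x)=-\int_0^1 f'(1+t(p-1))dt$ bounded, $1-p$ would have to be strictly positive in the interior unless identically zero, which is consistent, so this does not immediately work either. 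The honest fix: by the construction via monotone iteration from $\underline p=\eta_{C,1,\sigma}$, one gets $\underline p\le p$, and separately one shows $\eta_{C,1,\sigma}$ is itself \emph{not} a solution of \eqref{NewEq1} unless $N=\mathcal N_C$; so in the borderline case one argues directly, and in the strict case a strict inequality in the subsolution property combined with the strong maximum principle gives $p>\eta_{C,1,\sigma}$ but still $p$ could be $1$. I expect the genuine nontriviality argument to be the main obstacle: the clean resolution is that the paper will use that $\{p: \eta_{C,1,\sigma}\le p\le 1\}$ is invariant and the energy functional $J(p)=\frac12\int N^{2/\sigma}|\nabla p|^2-\int N^{2/\sigma}F(p)$ (with $F'=f$, $F(1)>F(0)$ by normalisation $\int_0^1 f>0$) has a minimiser $p$ in this class with $J(p)\le J(\eta_{C,1,\sigma})<J(1)$, forcing $p\ne 1$; then $p$ solves \eqref{NewEq1} and is admissible and non-trivial, proving Lemma \ref{NewLem1} and thereby Lemma \ref{NewLem3}.
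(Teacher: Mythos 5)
There is a genuine gap, and it originates in a sign error. Under Assumption \eqref{AssumptionDrift} you have $\partial_r\ln N\leq -Cr=\partial_r\ln\mathcal N_C$, hence $\partial_r\ln\mathcal N_C-\partial_r\ln N=-Cr-\partial_r\ln N\geq 0$, not $\leq 0$ as you wrote. Combined with $\partial_r\eta_{C,1,\sigma}\geq 0$, the correction term $\frac2\sigma\,\partial_r\eta_{C,1,\sigma}\,\partial_r\bigl(\ln\mathcal N_C-\ln N\bigr)$ is nonnegative, so
\[
-\Delta\eta_{C,1,\sigma}-\frac2\sigma\Bigl\langle\frac{\nabla N}{N},\nabla\eta_{C,1,\sigma}\Bigr\rangle\;\geq\; f(\eta_{C,1,\sigma}),
\]
i.e.\ $\eta_{C,1,\sigma}$ is a \emph{super}-solution of \eqref{NewEq1}, not a sub-solution. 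This is exactly what the paper uses: pairing this super-solution with the trivial sub-solution $z\equiv 0$ (note $0\leq 1$ on $\partial\O$, so the boundary ordering is respected), the monotone method produces a solution $p$ of \eqref{NewEq1} with $0\leq p\leq\eta_{C,1,\sigma}$. Non-triviality is then immediate: since $\eta_{C,1,\sigma}$ is non-trivial, radial and non-decreasing with boundary value $1$, it is $<1$ somewhere in the interior, hence so is $p$. Your reversed orientation (sub-solution $\eta_{C,1,\sigma}$, super-solution $1$) traps the solution in $[\eta_{C,1,\sigma},1]$, which is precisely why you cannot rule out $p\equiv 1$.

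Your proposed "honest fix" via energy comparison cannot rescue the argument either: as the paper points out in the remarks following Theorem \ref{NewTheo}, $z_1\equiv 1$ is a \emph{global minimiser} of the energy functional on the affine space with boundary value $1$ (this is forced by the normalisation $\int_0^1 f>0$, which gives $F(1)>F(s)$ for all $s\in[0,1)$ after the relevant truncation), so no competitor can satisfy $J(\eta_{C,1,\sigma})<J(1)$. This is exactly why the existence of non-trivial states with boundary value $1$ requires the phase-plane/shooting analysis of Lemma \ref{NewLem2} rather than a variational argument, the latter being reserved for the boundary value $0$ case. Once the sign is corrected, the rest of your setup (divergence form, regularity of the operator, scale invariance of $\nabla N/N$) is fine and the proof closes in one line.
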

\begin{proof}[Proof of Lemma \ref{NewLem3}]
Let $\sigma\in (0;\overline \sigma_C)$, and let $\eta_{C,1,\sigma}$ be the non-trivial radially symmetric solution given by Lemma \ref{NewLem2}, which we abbreviate as $\eta_1$. Since $\partial_r \eta_1\geq 0$ and $\eta_1$ is radially symmetric, it follows that 
$$-\left\langle \frac{\nabla N}N,\nabla \eta_1\right\rangle=-\frac{\partial_r N}{N}\partial_r \eta_1\geq C r\partial_r \eta_1=-\left\langle \frac{\nabla \mathcal N_C}{\mathcal N_C},\nabla \eta_1\right\rangle.$$ The last inequality is a consequence of Assumption \eqref{AssumptionDrift}. Hence,
$$-\Delta \eta_1-\frac2\sigma\left\langle \frac{\nabla N}N,\nabla \eta_1\right\rangle-f(\eta_1)\geq -\Delta \eta_1-\frac2{\sigma}\left\langle \frac{\nabla \mathcal N_C}{\mathcal N_C},\nabla \eta_1\right\rangle-f(\eta_1)=0.$$ In other words, $\eta_1$ is a super-solution of \eqref{NewEq1}. Since $z\equiv 0$ is always a sub-solution of \eqref{NewEq1}, the classical method of sub and super-solutions \cite[Theorem 5.17]{LeDret2018} ensures the existence of a non-trivial solution of \eqref{NewEq1}.
\end{proof}
We now prove Lemma \ref{NewLem2}.

\subsubsection{Proof of Lemma \ref{NewLem2}}\label{PrNL2}

We first simplify the proof by noticing the following claim:
\begin{claim}\label{ClIntermediaire}
Let $\sigma>0$ be arbitrary. Assume there exists a non-trivial radially symmetric solution $\eta_{C,1,\sigma}$ of \eqref{NewEqGaussian1} that is radially symmetric and non-decreasing, and satisfies $0\leq \eta_{C,1,\sigma}\leq 1$. Then, for any $\tilde \sigma\in (0;\sigma)$, there exists a solution $\eta_{C,1,\tilde\sigma}$ of \eqref{NewEqGaussian1} \end{claim}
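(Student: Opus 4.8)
\textbf{Plan for the proof of Claim \ref{ClIntermediaire}.}
The statement is a monotonicity-in-$\sigma$ claim: once a well-behaved non-trivial solution exists for some parameter $\sigma$, it persists for all smaller $\tilde\sigma\in(0;\sigma)$. The plan is to use $\eta_{C,1,\sigma}$ itself as a super-solution for the equation with the smaller parameter $\tilde\sigma$, in the same spirit as the proof of Lemma \ref{NewLem3}. Concretely, write $\eta := \eta_{C,1,\sigma}$ and observe that since $\eta$ is radial and non-decreasing, $\partial_r\eta\geq 0$; moreover the Gaussian drift satisfies $-\langle \nabla \mathcal N_C/\mathcal N_C,\nabla\eta\rangle = Cr\,\partial_r\eta \geq 0$. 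Hence, for $0<\tilde\sigma<\sigma$ we have $\tfrac{2}{\tilde\sigma}\geq\tfrac{2}{\sigma}$ and therefore
\[
-\Delta\eta - \frac{2}{\tilde\sigma}\left\langle \nabla\eta, \frac{\nabla \mathcal N_C}{\mathcal N_C}\right\rangle - f(\eta)
\;\geq\;
-\Delta\eta - \frac{2}{\sigma}\left\langle \nabla\eta, \frac{\nabla \mathcal N_C}{\mathcal N_C}\right\rangle - f(\eta)
\;=\;0,
\]
so $\eta$ is a super-solution of \eqref{NewEqGaussian1} with $\sigma$ replaced by $\tilde\sigma$. Since $z\equiv 0$ is a sub-solution and $0\leq z\leq\eta\leq 1$, the method of sub- and super-solutions (\cite[Theorem 5.17]{LeDret2018}) yields a solution $\eta_{C,1,\tilde\sigma}$ of the $\tilde\sigma$-equation with $0\leq\eta_{C,1,\tilde\sigma}\leq\eta\leq 1$.

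The one subtlety is that the sub/super-solution method as just invoked produces \emph{some} solution trapped between $0$ and $\eta$, but does not a priori guarantee it is non-trivial. I would handle this exactly as is implicitly done for \eqref{NewEq1} in Lemma \ref{NewLem3}: either note that the construction via monotone iteration starting from the super-solution $\eta$ produces a non-trivial solution because the zero function is a strict sub-solution away from the boundary (the iteration decreases from $\eta$ but cannot collapse to $0$, since near $\partial\O$ the boundary data forces $\eta_{C,1,\tilde\sigma}=1$), or simply observe that the boundary condition $\eta_{C,1,\tilde\sigma}=1$ on $\partial\O$ already makes any such solution automatically distinct from the trivial states $0$ and $\theta$. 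In fact the boundary value being $1$ trivializes this point — the relevant content of "non-trivial'' here is just $\eta_{C,1,\tilde\sigma}\not\equiv 1$, and if the monotone iteration from $\eta\not\equiv 1$ ever returned $\equiv 1$ one could restart; but more cleanly, one keeps the additional structural properties by also using the monotone iteration to preserve radial symmetry and monotonicity in $r$ (each iterate solves a linear radial problem with non-decreasing radial data, hence stays radial and non-decreasing by the maximum principle), so that $\eta_{C,1,\tilde\sigma}$ inherits exactly the same qualitative properties and the induction/bootstrap of the argument in Section \ref{PrNL2} can proceed.

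The main obstacle, such as it is, is bookkeeping rather than a genuine difficulty: one must make sure the super-solution inequality goes the right way, which hinges precisely on the two facts that (i) $\partial_r\eta\geq 0$ and (ii) the drift coefficient $\tfrac{2}{\sigma}$ decreases as $\sigma$ increases, so that shrinking $\sigma$ \emph{strengthens} the (favorable-sign) drift term and keeps $\eta$ a super-solution. Because of this, I expect the proof to be short — essentially the three-line computation above plus a sentence citing \cite[Theorem 5.17]{LeDret2018} and a remark that radial monotone structure is preserved along the iteration. The payoff of the claim is that in proving Lemma \ref{NewLem2} it now suffices to exhibit \emph{one} value of $\sigma$ for which the non-trivial radial non-decreasing solution exists; the existence for all smaller $\sigma$, hence the threshold $\overline\sigma_C$, comes for free.
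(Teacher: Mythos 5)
Your proposal is correct and follows essentially the same route as the paper: the paper also observes that $-\tfrac{1}{\tilde\sigma}\langle \nabla\mathcal N_C/\mathcal N_C,\nabla\eta_{C,1,\sigma}\rangle=\tfrac{Cr}{\tilde\sigma}\partial_r\eta_{C,1,\sigma}\geq -\tfrac{1}{\sigma}\langle \nabla\mathcal N_C/\mathcal N_C,\nabla\eta_{C,1,\sigma}\rangle$, so that $\eta_{C,1,\sigma}$ is a super-solution for the $\tilde\sigma$-equation, and concludes by the sub/super-solution method exactly as in Lemma \ref{NewLem3}. Your extra remark on non-triviality (the produced solution lies below $\eta_{C,1,\sigma}\not\equiv 1$, hence cannot be $\equiv 1$) is a point the paper leaves implicit, and is resolved most cleanly by that comparison alone.
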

\begin{proof}[Proof of Claim \ref{ClIntermediaire}]
We once again use the method of sub and super-solutions. Indeed, it suffices to notice that
$$-\frac1{\tilde \sigma}\left\langle \frac{\nabla \mathcal N_C}{\mathcal N_C},\nabla \eta_{C,1,\sigma}\right\rangle=\frac{Cr}{\tilde\sigma}\partial_r \eta_{C,1,\sigma} \geq -\frac1{ \sigma}\left\langle \frac{\nabla \mathcal N_C}{\mathcal N_C},\nabla \eta_{C,1,\sigma}\right\rangle. $$Hence
$$-\Delta\eta_{C,\sigma,1}-\frac2{\tilde\sigma}\left\langle \frac{\nabla N}N,\nabla\eta_{C,\sigma,1}\right\rangle-f(u_{C,\sigma,1})\geq -\Delta\eta_{C,\sigma,1}-\frac2{\sigma}\left\langle \frac{\nabla \mathcal N_C}{\mathcal N_C},\nabla\eta_{C,\sigma,1}\right\rangle-f(u_{C,\sigma,1})=0.$$ This hence gives us a super-solution for the equation with $\tilde \sigma$, and the conclusion follows in the same way as Lemma \ref{NewLem3}.
\end{proof}

\begin{proof}[Proof of Lemma \ref{NewLem2}] Given Claim \ref{ClIntermediaire}, it suffices to prove that a solution exists for at least one $\sigma>0$. To prove that this is the case, we use a phase plane analysis and a shooting method. Let us briefly outline the main steps:

For $\alpha \in (0;\theta)$, we consider the solution $p_{\alpha,\sigma}\color{black}=p_{\alpha,\sigma}(r)\color{black}$ of the differential equation\footnote{The existence and uniqueness of such equation is discussed in Claim \ref{Cl:ExisRate}}
\begin{equation}\label{ODE1}
\begin{cases}
-p_{\alpha,\sigma}''+\frac{2r}\sigma p_{\alpha,\sigma}'-\frac{d-1}r p_{\alpha,\sigma}'=f(p_{\alpha,\sigma}),
\\p_{\alpha,\sigma}(0)=\alpha\,, p_{\alpha,\sigma}'(0)=0.
\end{cases}
\end{equation}
We prove successively: 
\begin{enumerate}
\item \underline{Step 1 (Claim \ref{NCL1}):} For any $\alpha\in (0;\theta)$, there exists $r_{\alpha,\sigma,\theta}>0$ such that
$$p_{\alpha,\sigma}(r_{\alpha,\sigma,\theta})=\theta\,, \quad \alpha< p_{\alpha,\sigma}<\theta \text{ in }(0;r_{\alpha,\sigma,\theta})\,,  \quad p_{\alpha,\sigma}'>0 \text{ in }(0;r_{\alpha,\sigma,\theta}).$$
\item \underline{Step 2 (Claim \ref{NCL2}):}  There holds:
$$r_{\alpha,\sigma,\theta}\underset{\alpha\to 0^+}\rightarrow +\infty.$$ 
\item \underline{Step 3 (Claim \ref{NCL3}, Claim \ref{NCL4}):} For any $\sigma>0$, there exists $\alpha\in (0;\theta)$ such that
$$p_{\alpha,\sigma}'(r)\underset{r\to \infty}\rightarrow +\infty\,, p_{\alpha,\sigma}'(r)>0\text{ on }[r_{\alpha,\sigma,\theta};+\infty).$$ This will enable us to show that, when $\sigma$ is fixed, there exists $R(\sigma,1)$ such that there exists $\alpha \in (0;\theta)$ satisfying 
$$p_{\alpha,\sigma}(R(\sigma,1))=1\,, p_{\alpha,\sigma} \text{ is increasing in $(0;R(\sigma,1)).$}$$  Let $R_\sigma^*$ be the smallest value such that there exists such a non-trivial solution.  We will prove that, for any $R>R_{\sigma^*}$, there exists a non-trivial solution in $\mathbb B(0;R)$ with boundary value 1.
\item\underline{Step 4 (Claim \ref{NCL5}): } We prove that 
$$R_\sigma^*\underset{\sigma \to 0}\rightarrow 0,$$ hence concluding the proof by choosing $\sigma_C>0$ such that $R_{\sigma_C}<R$.

\end{enumerate}

\paragraph{Step 1} The goal of this paragraph is to prove the following Claim:
\begin{claim}\label{NCL1}
For any $\alpha\in (0;\theta)$, there exists $r_{\alpha,\sigma,\theta}>0$ such that
$$p_{\alpha,\sigma}(r_{\alpha,\sigma,\theta})=\theta\,, \quad \alpha< p_{\alpha,\sigma}<\theta \text{ in }(0;r_{\alpha,\sigma,\theta})\,,  \quad p_{\alpha,\sigma}'>0 \text{ in }(0;r_{\alpha,\sigma,\theta}).$$
\end{claim}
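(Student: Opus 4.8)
\textbf{Proof plan for Claim \ref{NCL1}.} The claim concerns the initial-value problem \eqref{ODE1}, which in radial variables is a second-order ODE with a regular singular point at $r=0$; the plan is to analyse the sign of $p_{\alpha,\sigma}'$ and the behaviour of $p_{\alpha,\sigma}$ on a maximal interval and argue that it must cross the level $\theta$. First I would record the local behaviour near $r=0$: since $p_{\alpha,\sigma}(0)=\alpha\in(0;\theta)$ and $p_{\alpha,\sigma}'(0)=0$, and $f(\alpha)<0$ when $\alpha\in(0;\theta)$ (because $0$ and $\theta$ are consecutive roots of $f$ with $f'(0)<0$), the equation at $r=0$ gives, after resolving the $\tfrac{d-1}{r}p'$ singularity via the standard expansion $p_{\alpha,\sigma}''(0)=\tfrac{f(\alpha)}{d}<0$. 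Hence $p_{\alpha,\sigma}$ is initially \emph{decreasing}, which at first sight is the wrong direction — so the real first step is to fix signs/orientation. Re-examining the drift term: the equation is $-p''+\tfrac{2r}{\sigma}p'-\tfrac{d-1}{r}p'=f(p)$, i.e. $p''=\left(\tfrac{2r}{\sigma}-\tfrac{d-1}{r}\right)p'-f(p)$; with $f(\alpha)<0$ we get $p''(0^+)>0$ in the limit, so $p_{\alpha,\sigma}$ is initially increasing. Good — so the orientation is as claimed.

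The core of the argument is then a continuation/barrier argument. Let $(0;r_{\max})$ be the maximal interval on which $\alpha<p_{\alpha,\sigma}<\theta$. On this interval $f(p_{\alpha,\sigma})<0$, and I would show $p_{\alpha,\sigma}'>0$ throughout: multiply the equation by the integrating factor $\mu(r):=r^{d-1}e^{-r^2/\sigma}$, so that \eqref{ODE1} becomes $\left(\mu(r)p_{\alpha,\sigma}'\right)'=-\mu(r)f(p_{\alpha,\sigma})>0$ on $(0;r_{\max})$. Since $\mu(r)p_{\alpha,\sigma}'\to 0$ as $r\to0^+$ and its derivative is strictly positive, $\mu(r)p_{\alpha,\sigma}'>0$, hence $p_{\alpha,\sigma}'>0$ on $(0;r_{\max})$. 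In particular $p_{\alpha,\sigma}$ is strictly increasing and bounded above by $\theta$ on $(0;r_{\max})$, so it has a limit $\ell\le\theta$ as $r\to r_{\max}$. The only thing left is to exclude $r_{\max}=+\infty$ with $\ell<\theta$ (blow-up is excluded by the a priori bound $\alpha<p<\theta$, which keeps the solution in a compact set where $f$ is bounded, so the solution extends as long as it stays in the strip). If $r_{\max}=+\infty$ and $p_{\alpha,\sigma}\to\ell\le\theta$, then $f(p_{\alpha,\sigma})\to f(\ell)\le f(\theta_-)<0$ if $\ell<\theta$ (or $\to f(\theta)=0$ if $\ell=\theta$). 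I would integrate $\left(\mu p'\right)'=-\mu f(p)$ to derive a contradiction: if $\ell<\theta$ then $-\mu(r)f(p_{\alpha,\sigma}(r))\ge c\,\mu(r)$ for some $c>0$ and $r$ large, and one needs to be a bit careful because $\mu(r)=r^{d-1}e^{-r^2/\sigma}\to 0$, so $\int^\infty\mu<\infty$ and $\mu p'$ stays bounded — this does \emph{not} immediately force $p'$ to stay bounded away from $0$. So instead I would work directly with $p'$: from $p''=\left(\tfrac{2r}{\sigma}-\tfrac{d-1}{r}\right)p'-f(p)$ and $-f(p)\ge c>0$, for $r$ large the coefficient $\tfrac{2r}{\sigma}-\tfrac{d-1}{r}$ is positive, so $p''\ge -f(p)\ge c$ once $p'\ge 0$ (which we have), giving $p'(r)\ge c(r-r_0)\to\infty$ and hence $p_{\alpha,\sigma}(r)\to+\infty$, contradicting $p_{\alpha,\sigma}<\theta$. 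Therefore $r_{\max}<\infty$ and by maximality $p_{\alpha,\sigma}(r_{\max})=\theta$; set $r_{\alpha,\sigma,\theta}:=r_{\max}$.

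\textbf{Main obstacle.} The delicate point is the exclusion of the scenario where the trajectory creeps up to $\theta$ only as $r\to\infty$ without ever attaining it, i.e.\ showing $r_{\alpha,\sigma,\theta}<\infty$; the comparison $-f(p_{\alpha,\sigma})\ge c$ on the region $\{p\le\theta-\varepsilon\}$ combined with the sign of the first-order coefficient $\tfrac{2r}{\sigma}-\tfrac{d-1}{r}$ for large $r$ forces linear growth of $p'$, hence the contradiction; one must also handle the transient regime $r<\sqrt{\sigma(d-1)/2}$ where this coefficient is negative, but there $p'\ge 0$ is already established and the set is compact, so nothing escapes. A secondary (routine) point is the well-posedness of \eqref{ODE1} at the regular singular point $r=0$, which is deferred to Claim \ref{Cl:ExisRate}. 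Everything else — the positivity of $p_{\alpha,\sigma}'$ via the integrating factor $\mu$, and the containment $\alpha<p_{\alpha,\sigma}<\theta$ on the maximal interval — is a straightforward ODE barrier argument.
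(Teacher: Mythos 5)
Your overall strategy coincides with the paper's: introduce the integrating factor $\mu(r)=r^{d-1}e^{-r^2/\sigma}$, rewrite \eqref{ODE1} as $(\mu p_{\alpha,\sigma}')'=-\mu f(p_{\alpha,\sigma})>0$ on the maximal interval where $\alpha<p_{\alpha,\sigma}<\theta$, deduce $p_{\alpha,\sigma}'>0$ there, and then show that this interval is finite. The first two steps are correct. The gap is in the finiteness step: your contradiction argument ($p_{\alpha,\sigma}''\geq -f(p_{\alpha,\sigma})\geq c$ for large $r$, hence linear growth of $p_{\alpha,\sigma}'$) is only run under the hypothesis $\ell<\theta$, i.e.\ it presupposes that $p_{\alpha,\sigma}$ stays in a region where $-f$ admits a positive lower bound. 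The very scenario you flag as the delicate one — the trajectory creeping up to $\theta$ asymptotically, so that $\ell=\theta$ and $f(p_{\alpha,\sigma})\to 0$ — is not excluded by anything you write, so as it stands the proof does not establish $r_{\alpha,\sigma,\theta}<\infty$.

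The tool you discarded is precisely what closes this case, and it is what the paper uses. You argue that since $\mu\to 0$ and $\mu p_{\alpha,\sigma}'$ stays bounded, one cannot conclude that $p_{\alpha,\sigma}'$ is bounded away from $0$; but the relevant observation points the other way. The function $\mu p_{\alpha,\sigma}'$ is non-decreasing (indeed strictly increasing, since $f(p_{\alpha,\sigma})<0$ strictly on the interval) and vanishes at $r=0$, so for any fixed $r_0>0$ one has $\mu(r)p_{\alpha,\sigma}'(r)\geq \mu(r_0)p_{\alpha,\sigma}'(r_0)=:c>0$ for all $r\geq r_0$. Since $\mu(r)=r^{d-1}e^{-r^2/\sigma}\to 0$ super-exponentially, this yields $p_{\alpha,\sigma}'(r)\geq c\,r^{1-d}e^{r^2/\sigma}\to+\infty$, contradicting the boundedness of $p_{\alpha,\sigma}$ by $\theta$ — and this requires no information on $f(p_{\alpha,\sigma})$ for large $r$, so the cases $\ell<\theta$ and $\ell=\theta$ are dispatched simultaneously. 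Inserting this one line repairs the proof and renders your separate large-$r$ comparison argument unnecessary.
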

\begin{proof}[Proof of Claim \ref{NCL1}]
Since $p_{\alpha,\sigma}$ is continuous and since $p_{\alpha,\sigma}(0)=\alpha<\theta$, there exists $\delta>0$ such that 
$$p_{\alpha,\sigma}([0;\delta])\subset[0;\theta].$$
Let $r_{\alpha,\sigma,\theta}$ be defined as
$$r_{\alpha,\sigma,\theta}:=\sup\left\{\delta>0\,, p_{\alpha,\sigma}([0;\delta])\subset[0;\theta]\right\}>0.$$ We note that we do not yet rule out  the case $r_{\alpha,\sigma,\theta}=\infty.$ 

Let us first show that $p_{\alpha,\sigma}$ is increasing on $[0;r_{\alpha,\sigma,\theta})$. 


On $[0;r_{\alpha,\sigma,\theta})$, we have $f(p_{\alpha,\sigma}(r))<0$, so that
\begin{equation*}
 \begin{cases}
  p_{\alpha,\sigma}''\geq \left(\frac{2r}{\sigma}-\frac{d-1}{r}\right)p_{\alpha,\sigma}'\text{ in }(0;r_{\alpha,\sigma,\theta})\\
  p_{\alpha,\sigma}'(0)=0.
 \end{cases}
\end{equation*}
Integrating this inequality gives 
\begin{equation}\label{LK2}r\mapsto e^{-\frac{r^2}\sigma}r^{d-1}p_{\alpha,\sigma}'(r)\text{ is  non-decreasing}.\end{equation}

Furthermore, since $\alpha\in (0;\theta)$, $p_{\alpha,\sigma}$ is not constant in $(0;r_{\sigma,\alpha,\theta})$.This immediately gives
$$p_{\alpha,\sigma}'>0\text{ in }(0;r_{\alpha,\sigma,\theta}).$$
 This also proves that $r_{\alpha,\sigma,\theta}<+\infty$: we argue by contradiction. If $r_{\alpha,\sigma,\theta}=+\infty$ then \eqref{LK2} guarantees that $p_{\alpha,\sigma}'(r)\underset{r\to \infty}\rightarrow +\infty$, leading to an immediate contradiction. The same argument gives 
 $$p_{\alpha,\sigma}'(r_{\alpha,\sigma,\theta})>0.$$

\end{proof}
This Claim allows us to define 
$$r_{\alpha,\sigma,\theta}:=\inf\{r>0\,, p_{\alpha,\sigma}(r)=\theta\}\in (0;+\infty).$$

\paragraph{Step 2} The goal of this paragraph is the following Claim:
\begin{claim}\label{NCL2}
Let $\sigma>0$ be fixed. There holds
$$r_{\alpha,\sigma,\theta}\underset{\alpha\to 0^+}\rightarrow +\infty.$$ 

\end{claim}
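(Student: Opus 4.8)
\textbf{Proof proposal for Claim \ref{NCL2}.}

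The plan is to track how the solution $p_{\alpha,\sigma}$ of \eqref{ODE1} depends on the shooting parameter $\alpha$ as $\alpha\to 0^+$, and to show that it takes longer and longer to climb from $\alpha$ up to $\theta$. The natural tool is a continuity/compactness argument for the ODE with respect to initial data, combined with the observation that $p\equiv 0$ is a (singular) equilibrium of the radial equation. First I would fix $\sigma>0$ once and for all and rewrite \eqref{ODE1} in first-order form $ (p, v)' = \bigl(v,\ (\tfrac{2r}{\sigma}-\tfrac{d-1}{r})v + f(p)\bigr)$, keeping in mind the mild singularity at $r=0$ which has already been handled in Claim \ref{Cl:ExisRate}; away from $r=0$ the right-hand side is smooth, so solutions depend continuously on $\alpha$ on compact $r$-intervals.

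The key step is a contradiction argument: suppose $r_{\alpha,\sigma,\theta}$ does \emph{not} tend to $+\infty$ as $\alpha\to 0^+$. Then there is a sequence $\alpha_n\to 0^+$ and a finite bound $\bar r$ with $r_{\alpha_n,\sigma,\theta}\leq \bar r$ for all $n$. On the interval $[0;\bar r]$ the functions $p_{\alpha_n,\sigma}$ stay in $[0;\theta]$ (up to their respective hitting times) and, by Step 1, are nondecreasing with $e^{-r^2/\sigma}r^{d-1}p_{\alpha_n,\sigma}'$ nondecreasing; standard a priori bounds then give uniform $\mathscr C^1$ (indeed $\mathscr C^2$) bounds on $[0;\bar r]$, so along a subsequence $p_{\alpha_n,\sigma}\to p_\infty$ in $\mathscr C^1_{loc}$, where $p_\infty$ solves the same ODE with $p_\infty(0)=0$, $p_\infty'(0)=0$. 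By uniqueness for the initial value problem (Claim \ref{Cl:ExisRate}), $p_\infty\equiv 0$. But passing to the limit in the hitting condition $p_{\alpha_n,\sigma}(r_{\alpha_n,\sigma,\theta})=\theta$ with $r_{\alpha_n,\sigma,\theta}\to r_\infty\in(0;\bar r]$ (up to a further subsequence) forces $p_\infty(r_\infty)=\theta$, contradicting $p_\infty\equiv0$. Hence $r_{\alpha,\sigma,\theta}\to+\infty$.

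One technical point deserves care: the convergence $p_{\alpha_n,\sigma}(r_{\alpha_n,\sigma,\theta})\to p_\infty(r_\infty)$ uses uniform convergence of $p_{\alpha_n,\sigma}$ together with convergence of the hitting times, so I would first extract a subsequence along which $r_{\alpha_n,\sigma,\theta}$ converges (possible since the sequence is bounded), and then combine $|p_{\alpha_n,\sigma}(r_{\alpha_n,\sigma,\theta})-p_\infty(r_\infty)|\leq |p_{\alpha_n,\sigma}(r_{\alpha_n,\sigma,\theta})-p_\infty(r_{\alpha_n,\sigma,\theta})| + |p_\infty(r_{\alpha_n,\sigma,\theta})-p_\infty(r_\infty)|$, the first term going to $0$ by uniform convergence and the second by continuity of $p_\infty$. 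The only real obstacle is making the dependence on initial data rigorous across the singular point $r=0$ — but since the paper already establishes existence and uniqueness for \eqref{ODE1} in Claim \ref{Cl:ExisRate}, the continuous dependence follows from the same fixed-point/integral-equation formulation (the kernel $r^{1-d}\int_0^r s^{d-1}(\cdots)\,ds$ is uniformly controlled for small $r$), so this is routine rather than genuinely hard. An alternative, slightly slicker route avoiding any limiting argument would be to use the energy $\mathscr E(p_{\alpha,\sigma},p_{\alpha,\sigma}')=\tfrac12 (p_{\alpha,\sigma}')^2 + F(p_{\alpha,\sigma})$: on $(0;r_{\alpha,\sigma,\theta})$ the drift terms have a sign, so $\mathscr E$ is, up to the explicit damping factor, monotone, and one can bound $p_{\alpha,\sigma}'$ from above by something proportional to $\sqrt{F(\alpha)-F(p_{\alpha,\sigma})}\leq \sqrt{-F(\theta)}\,$-type quantities, then integrate $dp/p' $ to get a lower bound on $r_{\alpha,\sigma,\theta}$ that blows up as $\alpha\to0$ because $p_{\alpha,\sigma}'$ must be small near $r=0$ when $\alpha$ is small; I would keep the compactness argument as the primary proof since it is cleaner to write.
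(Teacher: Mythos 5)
Your compactness argument is correct in substance, but it is genuinely different from the paper's proof. The paper argues quantitatively: it sets $\xi(r):=\tfrac12\bigl(p_{\alpha,\sigma}^2+p_{\alpha,\sigma}'^2\bigr)$, shows via the ODE and the bound $M=\sup_{s\in(0;1)}\tfrac{-f(s)}{s}$ that $\xi'\leq \xi\,(M+1+4r/\sigma)$ on $(0;r_{\alpha,\sigma,\theta})$, and concludes by Gr\"onwall that $\tfrac12\theta^2\leq \xi(r_{\alpha,\sigma,\theta})\leq \tfrac{\alpha^2}2 e^{(M+1)r_{\alpha,\sigma,\theta}+2r_{\alpha,\sigma,\theta}^2/\sigma}$, which forces $r_{\alpha,\sigma,\theta}\to\infty$ and in fact yields an explicit lower bound of order $\sqrt{\sigma\ln(\theta/\alpha)}$. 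Your route trades this explicit rate for a soft contradiction via continuous dependence on $\alpha$ (which, as you note, does follow from the uniform contraction in Claim \ref{Cl:ExisRate}) and uniqueness of the trivial solution through $(0,0)$; since the paper only ever uses the qualitative divergence (in Step 3 and in Remark \ref{Remar}, where the same statement for $\theta/2$ is needed and your argument adapts verbatim), the soft version suffices. Two small points to tighten: your uniform $\mathscr C^1$ bounds are only immediate on $[0;r_{\alpha_n,\sigma,\theta}]$, where $p_{\alpha_n,\sigma}\in[\alpha_n;\theta]$ and the integral identity $r^{d-1}p'(r)=e^{r^2/\sigma}\int_0^r e^{-t^2/\sigma}(-f(p(t)))t^{d-1}\,dt$ gives $0\leq p'\leq C(\bar r,\sigma,f)$; beyond the hitting time $f(p)$ changes sign and $p$ may leave $[0;\theta]$, so you should run the limit on $[0;r_\infty-\epsilon]$ and close with the uniform Lipschitz bound near $r_\infty$ rather than claiming convergence on all of $[0;\bar r]$. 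Also, your ``slicker'' energy alternative is not complete as sketched: the damping coefficient $\tfrac{2r}{\sigma}-\tfrac{d-1}{r}$ changes sign at $r=\sqrt{\sigma(d-1)/2}$, so the monotonicity of $\mathscr E$ you would need only holds on a bounded initial interval; stick with the compactness argument (or the paper's Gr\"onwall bound, which sidesteps this by absorbing the drift term into the growth factor).
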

\begin{proof}[Proof of Claim \ref{NCL2}]
The proof relies on the study of the function 
$$\xi(r):=\frac12(p_{\alpha,\sigma}^2+p_{\alpha,\sigma}'^2).$$  We introduce 
$$M:=\sup_{s\in (0;1)}\frac{-f(s)}s>0.$$ This quantity is finite due to the assumptions on $f$. Differentiating $\xi$  in $(0;r_{\alpha,\sigma,\theta})$ gives
\begin{align*}
\xi'(r)&=p_{\alpha,\sigma}'(r)\left(p_{\alpha,\sigma}(r)+p_{\alpha,\sigma}''(r)\right)\\
&=p_{\alpha,\sigma}'(r)\left(p_{\alpha,\sigma}(r)-f(p_{\alpha,\sigma})+2\frac{r}{\sigma}p_{\alpha,\sigma}'(r)-\frac{d-1}{r}p_{\alpha,\sigma}'(r)\right)\\
&\leq p_{\alpha,\sigma}'(r)\left(p_{\alpha,\sigma}(r)+ Mp_{\alpha,\sigma}(r)+2\frac{r}\sigma p_{\alpha,\sigma}'(r)\right)\text{ since $p_{\alpha,\sigma}'>0\text{ in }(0;r_{\alpha,\sigma,\theta})$}
\\&\leq p_{\alpha,\sigma}'(r)p_{\alpha,\sigma}(r)\left(M+1\right)+2\frac{r}\sigma p_{\alpha,\sigma}'(r)^2
\\&\leq \frac{M+1}2 (p_{\alpha,\sigma}'(r)^2+p_{\alpha,\sigma}(r)^2)+2\frac{r}{\sigma}( p_{\alpha,\sigma}'(r)^2+p_{\alpha,\sigma}(r)^2)
\\&\leq \xi(r)\left(M+1+4\frac{r}\sigma\right).
\end{align*}

Since $\xi(0)=\frac12\alpha^2$ we conclude from Gr\"{o}nwall's lemma that 
$$\xi(r)\leq \frac{\alpha^2}2e^{(M+1)r+2\frac{r^2}\sigma}.$$
Finally 
$$\xi(r_{\alpha,\sigma,\theta})\geq \frac12 p_{\alpha,\sigma}(r_{\alpha,\sigma,\theta})^2= \frac12\theta^2,$$ so that

$$\frac12\theta^2\leq  \frac{\alpha^2}2e^{(M+1)r_{\alpha,\sigma,\theta}+2\frac{r_{\alpha,\sigma,\theta}^2}\sigma}.$$
The conclusion follows.
\end{proof}
\begin{remark}\label{Remar}
If we define $r_{\alpha,\sigma,\frac\theta2}$ as the first root of $p_{\alpha,\sigma}(r_{\alpha,\sigma,\frac\theta2})=\frac\theta2$, the same proof shows that $$r_{\alpha,\sigma,\frac\theta2}\underset{\alpha \to 0}\rightarrow +\infty.$$
\end{remark}
\paragraph{Step 3} In this paragraph, we prove the two following claims:
\begin{claim}\label{NCL3}
 For any $\sigma>0$, there exists $\alpha\in (0;\theta)$ such that
$$p_{\alpha,\sigma}'(r)\underset{r\to \infty}\rightarrow +\infty\,, p_{\alpha,\sigma}'(r)>0\text{ on }[r_{\alpha,\sigma,\theta};+\infty).$$ \end{claim}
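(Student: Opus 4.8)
\textbf{Proof plan for Claim \ref{NCL3}.}
The plan is to combine the shooting-type estimates already established with a continuity/openness argument on the set of admissible initial heights $\alpha$. Fix $\sigma>0$. For $\alpha\in(0;\theta)$ define
\[
A_\sigma:=\left\{\alpha\in(0;\theta)\,:\, p_{\alpha,\sigma}'(r)>0\text{ for all }r\geq r_{\alpha,\sigma,\theta}\text{ and }p_{\alpha,\sigma}'(r)\underset{r\to\infty}\longrightarrow+\infty\right\}.
\]
The goal is to show $A_\sigma\neq\emptyset$. First I would record the basic monotonicity mechanism: as long as $p_{\alpha,\sigma}(r)\in(\theta;1)$ one has $f(p_{\alpha,\sigma})>0$, so the quantity $r\mapsto e^{-r^2/\sigma}r^{d-1}p_{\alpha,\sigma}'(r)$ satisfies a differential inequality of the same flavour as \eqref{LK2} but with the opposite sign coming from the reaction term; more usefully, I would work with the energy $\mathscr E(p_{\alpha,\sigma},p_{\alpha,\sigma}')=\tfrac12 p_{\alpha,\sigma}'^2+F(p_{\alpha,\sigma})$ and note that along \eqref{ODE1},
\[
\frac{d}{dr}\mathscr E(p_{\alpha,\sigma}(r),p_{\alpha,\sigma}'(r))=\left(\frac{2r}{\sigma}-\frac{d-1}{r}\right)p_{\alpha,\sigma}'(r)^2,
\]
which is \emph{positive} once $r>\sqrt{\sigma(d-1)/2}$ and $p_{\alpha,\sigma}'\neq 0$. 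So beyond that radius the energy is non-decreasing, and the only way $p_{\alpha,\sigma}'$ can vanish at some $\bar r>r_{\alpha,\sigma,\theta}$ is if the trajectory turns around before the drift term becomes favourable.

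The core of the argument is therefore to show that for $\alpha$ small enough the trajectory reaches a large radius (by Claim \ref{NCL2}, $r_{\alpha,\sigma,\theta}\to+\infty$ as $\alpha\to0^+$, and similarly $r_{\alpha,\sigma,\theta/2}\to+\infty$ by Remark \ref{Remar}) while still increasing, and in particular that it is already past the threshold radius $\sqrt{\sigma(d-1)/2}$ and has accumulated enough ``kinetic energy'' $\tfrac12 p_{\alpha,\sigma}'^2$ at $r_{\alpha,\sigma,\theta}$ that the subsequent gain of potential energy $F(p)-F(\theta)$ (bounded above by $F(1)-F(\theta)$, or even negative since $F$ decreases then increases past $\theta$ — recall $\int_0^1 f>0$ so $F(1)>0=F(0)$ and $F$ has its minimum at... actually $F$ decreases on $(0;\theta)$... let me be careful: $f>0$ on $(\theta;1)$ so $F$ is increasing there) cannot consume all of it. Concretely I would lower-bound $p_{\alpha,\sigma}'(r_{\alpha,\sigma,\theta})$ from below: on $[r_{\alpha,\sigma,\theta/2};r_{\alpha,\sigma,\theta}]$ the solution is increasing, $f(p_{\alpha,\sigma})<0$, so \eqref{LK2} gives $e^{-r^2/\sigma}r^{d-1}p_{\alpha,\sigma}'(r)$ non-decreasing, whence $p_{\alpha,\sigma}'(r_{\alpha,\sigma,\theta})\geq \bigl(r_{\alpha,\sigma,\theta/2}/r_{\alpha,\sigma,\theta}\bigr)^{d-1}e^{(r_{\alpha,\sigma,\theta}^2-r_{\alpha,\sigma,\theta/2}^2)/\sigma}p_{\alpha,\sigma}'(r_{\alpha,\sigma,\theta/2})$; a slightly more careful bookkeeping on $[r_{\alpha,\sigma,\theta/2};r_{\alpha,\sigma,\theta}]$ using $p_{\alpha,\sigma}''\geq (2r/\sigma-(d-1)/r)p_{\alpha,\sigma}'+f(p_{\alpha,\sigma})$ and $f\geq -M\cdot 1$ on that interval, together with the fact that $p_{\alpha,\sigma}$ travels a fixed distance $\theta/2$ over a radial interval of length at least... — this is where one shows the velocity at $r_{\alpha,\sigma,\theta}$ is bounded below by a constant independent of $\alpha$ (indeed it must be at least of order $(\theta/2)/(r_{\alpha,\sigma,\theta}-r_{\alpha,\sigma,\theta/2})$ if that length stays bounded, or one uses the energy identity between these two radii). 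Once $p_{\alpha,\sigma}'(r_{\alpha,\sigma,\theta})$ is bounded below uniformly and $r_{\alpha,\sigma,\theta}>\sqrt{\sigma(d-1)/2}$ (true for $\alpha$ small by Claim \ref{NCL2}), the energy monotonicity above shows $\mathscr E$ only increases afterward, so $\tfrac12 p_{\alpha,\sigma}'(r)^2=\mathscr E(r)-F(p_{\alpha,\sigma}(r))\geq \mathscr E(r_{\alpha,\sigma,\theta})-\sup_{[0;1]}F>0$ provided the lower bound on $\mathscr E(r_{\alpha,\sigma,\theta})\geq \tfrac12 p_{\alpha,\sigma}'(r_{\alpha,\sigma,\theta})^2+F(\theta)$ beats $\sup F$; one then makes $\alpha$ small to push $r_{\alpha,\sigma,\theta}$ large and iterate the favourable-drift gain so that in fact $p_{\alpha,\sigma}'\to+\infty$. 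Finally, $p_{\alpha,\sigma}'>0$ for all $r\geq r_{\alpha,\sigma,\theta}$ is immediate once it never vanishes, and it cannot vanish because at a first zero $\bar r$ one would need $\mathscr E(\bar r)=F(p_{\alpha,\sigma}(\bar r))\leq \sup F$, contradicting the strict lower bound; the divergence $p_{\alpha,\sigma}'(r)\to+\infty$ follows by integrating the favourable differential inequality $p_{\alpha,\sigma}''\geq(2r/\sigma-(d-1)/r)p_{\alpha,\sigma}'$ valid once $p_{\alpha,\sigma}\geq\theta$, exactly as in Claim \ref{NCL1}.

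The main obstacle I anticipate is the uniform-in-$\alpha$ lower bound on the velocity $p_{\alpha,\sigma}'$ at (or near) $r_{\alpha,\sigma,\theta}$: Claim \ref{NCL2} only tells us the radii diverge, not at what rate, and a priori the trajectory could creep up to $\theta$ with very small slope over a very long radial interval, in which case the energy argument has nothing to latch onto. Resolving this requires quantifying Claim \ref{NCL2} — combining the Grönwall upper bound $\xi(r)\leq\tfrac{\alpha^2}2 e^{(M+1)r+2r^2/\sigma}$ on $[0;r_{\alpha,\sigma,\theta}]$ with a complementary lower bound on how fast $\xi$ (equivalently $p_{\alpha,\sigma}$) can grow, so as to pin $r_{\alpha,\sigma,\theta/2}$ and $r_{\alpha,\sigma,\theta}$ within a controlled multiplicative factor of $\sqrt{\sigma|\log\alpha|}$, and then reading off that over the last stretch the slope is comparable to $\sqrt{|\log\alpha|/\sigma}$, hence large rather than small. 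This is presumably the content of the auxiliary Claim \ref{NCL4} referenced alongside \ref{NCL3} in Step 3, and I would expect the paper to split precisely this velocity estimate off into that claim.
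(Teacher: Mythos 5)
Your first half follows essentially the paper's route, but you stop just short of closing it: the uniform-in-$\alpha$ lower bound on $p_{\alpha,\sigma}'(r_{\alpha,\sigma,\theta})$ that you flag as ``the main obstacle'' requires no quantitative refinement of Claim \ref{NCL2}. Once $\alpha$ is small enough that $r_{\alpha,\sigma,\frac\theta2}>\sqrt{\sigma(d-1)/2}$, the energy $\mathscr E=\frac12 p'^2+F(p)$ is non-decreasing on $(r_{\alpha,\sigma,\frac\theta2};+\infty)$, so $\frac12 p_{\alpha,\sigma}'(r_{\alpha,\sigma,\theta})^2+F(\theta)=\mathscr E(r_{\alpha,\sigma,\theta})\geq\mathscr E(r_{\alpha,\sigma,\frac\theta2})\geq F\left(\frac\theta2\right)$, and since $F$ is decreasing on $(0;\theta)$ this yields $p_{\alpha,\sigma}'(r_{\alpha,\sigma,\theta})\geq\sqrt{2\left(F\left(\frac\theta2\right)-F(\theta)\right)}=:\underline m>0$ uniformly in $\alpha$. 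The trajectory cannot ``creep up to $\theta$ with very small slope'' precisely because of the monotonicity you already wrote down; the $\sqrt{|\log\alpha|/\sigma}$ bookkeeping you propose is unnecessary, and Claim \ref{NCL4} is not this velocity estimate (it is the definition and monotonicity of $R(\sigma,1)$).

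The genuine gap is in the second half, on $[r_{\alpha,\sigma,\theta};+\infty)$. Your mechanism for excluding a zero of $p'$ is $\frac12 p'^2=\mathscr E-F(p)\geq\mathscr E(r_{\alpha,\sigma,\theta})-\sup_{[0,1]}F$, and you note yourself that this only works ``provided'' $\mathscr E(r_{\alpha,\sigma,\theta})$ beats $\sup F$. It does not: the argument above only guarantees $\mathscr E(r_{\alpha,\sigma,\theta})\geq F\left(\frac\theta2\right)<0$, whereas $\sup_{[0,1]}F=F(1)>0$ because $\int_0^1 f>0$. So at a putative first zero $\bar r$ of $p'$ the identity $\mathscr E(\bar r)=F(p_{\alpha,\sigma}(\bar r))$ produces no contradiction. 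Your fallback, integrating ``$p''\geq(2r/\sigma-(d-1)/r)p'$ valid once $p\geq\theta$'', has the wrong sign: for $p\in(\theta;1)$ one has $f(p)>0$, hence $p''=-f(p)+(2r/\sigma-(d-1)/r)p'\leq(2r/\sigma-(d-1)/r)p'$, not $\geq$. The paper closes this phase by a second-derivative analysis that is absent from your plan: set $g:=p''=-f(p)+\left(\frac{2r}\sigma-\frac{d-1}r\right)p'$, observe $g(r_{\alpha,\sigma,\theta})\geq\left(\frac{2r_{\alpha,\sigma,\theta}}\sigma-\frac{d-1}{r_{\alpha,\sigma,\theta}}\right)\underline m>0$, compute $g'=p'\,G(r,p,p')$ for an explicit $G$, and verify that $G\geq0$ whenever $p'\geq\underline m$ and $r\geq r_{\alpha,\sigma,\theta}$ with $r_{\alpha,\sigma,\theta}$ large enough (i.e.\ $\alpha$ small enough). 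This bootstraps $p'\geq\underline m$ and $p''\geq g(r_{\alpha,\sigma,\theta})>0$ on all of $[r_{\alpha,\sigma,\theta};+\infty)$, whence $p'\to+\infty$. Without that ingredient your proof does not rule out the solution turning around before reaching $1$.
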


\begin{claim}\label{NCL4}
Let $\sigma>0$ be fixed. There exists $R(\sigma,1)$ such that there exists $\alpha \in (0;\theta)$ satisfying 
$$p_{\alpha,\sigma}(R(\sigma,1))=1\,, p_{\alpha,\sigma} \text{ is increasing in $(0;R(\sigma,1)).$}$$ As a consequence, a non-trivial solution of \eqref{NewEq1} exists in $\mathbb B(0;R(\sigma,1))$, and this solution is radially symmetric and non-decreasing. Furthermore, for any $R\geq R(\sigma,1)$, a non-trivial solution of \eqref{NewEq1} exists in $\mathbb B(0;R)$.\end{claim}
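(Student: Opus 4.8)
The plan is to exploit the ODE \eqref{ODE1} together with the energy-type quantity already introduced in Step 2, but this time monitored \emph{above} the level $\theta$, where $f$ becomes positive and pushes the solution up. First I would establish Claim \ref{NCL3}: fix $\sigma>0$, and consider the trajectory $p_{\alpha,\sigma}$ emanating from $\alpha\in(0;\theta)$. By Step 1, $p_{\alpha,\sigma}$ reaches $\theta$ at a finite radius $r_{\alpha,\sigma,\theta}$ with $p_{\alpha,\sigma}'(r_{\alpha,\sigma,\theta})>0$. For $r>r_{\alpha,\sigma,\theta}$, as long as $p_{\alpha,\sigma}$ stays below $1$ we have $f(p_{\alpha,\sigma})>0$, and moreover the drift term $\frac{2r}{\sigma}p_{\alpha,\sigma}' - \frac{d-1}{r}p_{\alpha,\sigma}'$ is nonnegative once $r\geq\sqrt{(d-1)\sigma/2}$, hence $p_{\alpha,\sigma}''\geq f(p_{\alpha,\sigma})>0$ there, so $p_{\alpha,\sigma}'$ cannot decrease and in fact $p_{\alpha,\sigma}$ is convex and increasing; a lower bound on $p_{\alpha,\sigma}''$ by a positive constant (using $f\geq c>0$ on a compact subinterval of $(\theta;1)$) forces $p_{\alpha,\sigma}$ to cross $1$ in finite radius. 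To obtain a \emph{global} increasing-to-infinity trajectory one instead tracks the regime where $p_{\alpha,\sigma}>1$: there $f<0$, but the integrating-factor identity analogous to \eqref{LK2}, namely that $r\mapsto e^{-r^2/\sigma}r^{d-1}p_{\alpha,\sigma}'(r)$ satisfies a controllable differential inequality, together with the fact that the drift coefficient $2r/\sigma$ grows, lets one show $p_{\alpha,\sigma}'$ stays positive and blows up. The existence of a suitable $\alpha$ is then a shooting/continuity argument: the set of $\alpha$ for which the trajectory stays increasing and escapes to $+\infty$ is nonempty (it contains $\alpha$ close enough to $\theta$, where the solution immediately behaves like the ODE started near the unstable equilibrium $\theta$).

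For Claim \ref{NCL4}, the plan is to combine Step 2 (which gives $r_{\alpha,\sigma,\theta}\to+\infty$ as $\alpha\to0^+$) with Claim \ref{NCL3} via an intermediate value argument on $\alpha$. Define, for each $\alpha$ for which the trajectory is increasing past $\theta$, the hitting radius $R_\alpha := \inf\{r>r_{\alpha,\sigma,\theta}\,,\ p_{\alpha,\sigma}(r)=1\}$ (finite by the convexity argument above). The map $\alpha\mapsto R_\alpha$ is continuous by continuous dependence of ODE solutions on initial data, and transversality of the crossing of the level $1$ (since $p_{\alpha,\sigma}'>0$ there). As $\alpha\to 0^+$, $r_{\alpha,\sigma,\theta}\to\infty$ hence $R_\alpha\to\infty$; for $\alpha$ near a value where the trajectory escapes quickly, $R_\alpha$ is comparatively small. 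By the intermediate value theorem, the image of $\alpha\mapsto R_\alpha$ is an interval $[R(\sigma,1);+\infty)$ (or contains such an interval), so for each $R\geq R(\sigma,1)$ there is an $\alpha$ with $R_\alpha=R$. The corresponding $p_{\alpha,\sigma}$, restricted to $[0;R]$, is radial, non-decreasing, takes values in $[\alpha;1]\subset[0;1]$, satisfies the radial form of the PDE in \eqref{NewEq1}, and has boundary value $1$ — i.e.\ it is a non-trivial admissible solution on $\mathbb B(0;R)$. That $R(\sigma,1)$ can be taken to be the smallest such radius $R_\sigma^*$ is immediate from its definition as an infimum, and monotonicity ($R\geq R(\sigma,1)$ all work) follows because the shooting image is a half-line.

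The main obstacle I expect is Claim \ref{NCL3}, specifically ensuring that for a \emph{well-chosen} $\alpha$ the trajectory does not merely oscillate or get stuck below $1$, but is genuinely increasing on all of $[r_{\alpha,\sigma,\theta};+\infty)$ with $p_{\alpha,\sigma}'\to+\infty$; the delicate point is the competition between the repulsive drift $+\frac{2r}{\sigma}p'$ (helpful, once $r$ is large) and the absorbing term $-\frac{d-1}{r}p'$ near small $r$, together with the sign change of $f$ at $1$. The cleanest route is probably to show that the convexity $p''\geq f(p)>0$ on $(\theta;1)$ already forces the trajectory to exit $[\theta;1]$ through $1$ in finite radius for \emph{every} $\alpha\in(0;\theta)$ once $\sigma$ (or rather $r_{\alpha,\sigma,\theta}$, via Step 2's reversed monotonicity) is arranged so that $r_{\alpha,\sigma,\theta}\geq\sqrt{(d-1)\sigma/2}$, thereby making the drift term favorable throughout the relevant range; then Claim \ref{NCL3}'s ``$p'\to+\infty$'' is only needed as a technical consequence and can be read off from the integrating-factor monotonicity \eqref{LK2} extended past $r_{\alpha,\sigma,\theta}$. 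Handling the small-$r$ boundary layer where $-\frac{d-1}{r}p'$ dominates requires care but is routine once one notes $p'(0)=0$ and uses \eqref{LK2} on $(0;r_{\alpha,\sigma,\theta})$.
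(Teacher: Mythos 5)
The overall architecture of your Claim \ref{NCL4} argument --- shooting in $\alpha$, finiteness of the hitting radius $R_\alpha$ of the level $1$, continuity/monotonicity of $\alpha\mapsto R_\alpha$ together with $R_\alpha\to+\infty$ as $\alpha\to 0^+$, and an intermediate value argument --- is the same as the paper's (the paper obtains the intermediate values from the comparison $p_{\alpha,\sigma}<p_{\overline\alpha,\sigma}$ for $\alpha<\overline\alpha$, so that $\alpha\mapsto R(\sigma,1,\alpha)$ is non-increasing, rather than from transversality, but this is cosmetic). However, the step you lean on to make $R_\alpha$ finite contains a sign error that is fatal to the ``clean route'' you propose. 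From \eqref{ODE1},
$$p_{\alpha,\sigma}''=\left(\frac{2r}{\sigma}-\frac{d-1}{r}\right)p_{\alpha,\sigma}'-f(p_{\alpha,\sigma}),$$
so on the range $\theta<p_{\alpha,\sigma}<1$, where $f>0$, the reaction term contributes $-f(p_{\alpha,\sigma})<0$: it works \emph{against} convexity, not for it. Your inequality ``$p''\geq f(p)>0$'' above $\theta$ (and hence the assertion that convexity forces the trajectory to exit $[\theta;1]$ through $1$ in finite radius for \emph{every} $\alpha$ once the drift coefficient is positive) is false as stated. This is precisely why Claim \ref{NCL3} is delicate in the paper: one must first extract, by an energy argument, a lower bound $p_{\alpha,\sigma}'(r_{\alpha,\sigma,\theta})\geq\underline m>0$ that is uniform in $\alpha$, and then verify that the favorable drift term $\bigl(\frac{2r}{\sigma}-\frac{d-1}{r}\bigr)p'$ beats the unfavorable $-f(p)$ for all $r\geq r_{\alpha,\sigma,\theta}$, which forces $r_{\alpha,\sigma,\theta}$ (hence $\alpha$) to be chosen suitably. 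Once Claim \ref{NCL3} is granted, finiteness of $R_\alpha$ follows directly from $p_{\alpha,\sigma}'>0$ and $p_{\alpha,\sigma}'\to+\infty$ on $[r_{\alpha,\sigma,\theta};+\infty)$; you should invoke that conclusion rather than the convexity shortcut.

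Two further points. First, your assertion that the good set of $\alpha$ ``contains $\alpha$ close enough to $\theta$'' is unsupported and runs opposite to the mechanism of the proof: Claim \ref{NCL3} and its uniform bound $\underline m$ are obtained for $\alpha$ close to $0$, where Claim \ref{NCL2} makes $r_{\alpha,\sigma,\theta}$ large and the drift dominant; near $\theta$ the trajectory starts near an equilibrium and nothing of the sort is established. Second, for the intermediate value argument you need the set of admissible $\alpha$ to be an interval; the paper secures this by noting that every $\alpha\in(0;\overline\alpha)$ below a working $\overline\alpha$ also satisfies the conclusion of Claim \ref{NCL3}, with $R(\sigma,1,\alpha)\geq r_{\alpha,\sigma,\theta}\to+\infty$ as $\alpha\to 0^+$. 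With these repairs your Claim \ref{NCL4} argument coincides with the paper's.
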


\begin{proof}[Proof of Claim \ref{NCL3}]
To prove this Claim, the first essential step is to prove that, when $\sigma>0$ is fixed, there exists $\underline m>0$ such that, for any $\alpha>0$ small enough
$$p_{\alpha,\sigma}'(r_{\sigma,\alpha,\theta})\geq \underline m.$$ This is done through energy arguments.

We first observe  that we can choose $\alpha>0$ small enough so that the energy
$$ \mathscr E_{\alpha,\sigma}:\color{black}\R_+\ni r\mapsto \frac12(p_{\alpha,\sigma}'(r))^2+F(p_{\alpha,\sigma}(r))$$ is increasing on $(r_{\sigma,\alpha,\frac\theta2};+\infty)$. Here, we recall that $r_{\sigma,\alpha,\frac\theta2}$ was defined in Remark \ref{Remar} as the first solution of $p(r_{\sigma,\alpha,\frac\theta2})=\frac\theta2$ in $(0;r_{\alpha,\sigma,\theta})$.

Indeed, this energy satisfies
$$\frac{d\mathscr E_{\alpha,\sigma}}{dr}=
\left(\frac{2r}{\sigma}-\frac{d-1}{r}\right)p'_{\sigma,\alpha}(r)^2.$$ This last term is positive whenever 
$$r\geq \sqrt{\frac{\sigma(d-1)}2}.$$ As a consequence, it is sufficient, to obtain the monotonicity of the energy on $(r_{\alpha,\sigma,\theta};+\infty)$, to ensure that 
$$r_{\alpha,\sigma,\theta}>\sqrt{\frac{\sigma(d-1)}2}.$$ Claim \ref{NCL2} guarantees that this is possible provided $\alpha>0$ is small enough. We will even require something stronger than $r_{\alpha,\sigma,\theta}>\sqrt{\frac{\sigma(d-1)}2}$, that is, we fix (thanks to Remark \ref{Remar}) $\alpha>0$ small enough so that 
$$r_{\alpha,\sigma,\frac\theta2}>\sqrt{\frac{\sigma(d-1)}2}.$$

$p_{\alpha,\sigma}$ satisfies
$$p_{\alpha,\sigma}''(r)=-f(p_{\alpha,\sigma})+p_{\alpha,\sigma}'\left(\frac{2r}\sigma-\frac{d-1}r\right).$$ The previous computation then shows that $\mathscr E_{\alpha,\sigma}$ is increasing on $(r_{\sigma,\alpha,\frac\theta2};+\infty)$, whence it follows that 
$$\mathscr E_{\alpha,\sigma}(r_{\alpha,\sigma,\theta})>\mathscr E_{\alpha,\sigma}(r_{\sigma,\alpha,\frac\theta2})\geq F\left(\frac\theta2\right).$$ In particular, we obtain
$$p_{\alpha,\sigma}'(r_{\alpha,\sigma,\theta})>\sqrt{2\left(F\left(\frac{\theta}2\right)-F\left(\theta\right)\right)}=:\underline m.$$
We quickly remark that, since $f$ is negative on $(0;\theta)$, $F(\theta)<F\left(\frac\theta2\right)$, so that the right-hand side of the previous inequality is indeed positive.

The key part is that this lower estimate on $p_{\alpha,\sigma}'(r_{\alpha,\sigma,\theta})$ is uniform in $\alpha$.

We now turn back to the equation on $p_{\color{black}\alpha,\sigma}$:
$$p_{\color{black}\alpha,\sigma}''(r)=-f(p_{\color{black}\alpha,\sigma}(\color{black}r\color{black}))+p_{\color{black}\alpha,\sigma}'(r)\left(\frac{2r}\sigma-\frac{d-1}r\right)=:g(r).$$
We will obtain that $p_{\alpha,\sigma}'$ is increasing and goes to $+\infty$ by studying the growth of $g$. First, notice that 
$$g(r_{\alpha,\sigma,\theta})\geq \left(2\frac{r_{\alpha,\sigma,\theta}}\sigma-\frac{d-1}{r_{\alpha,\sigma,\theta}}\right)\underline m>0$$ because of the uniform lower bound on $p_{\alpha,\sigma}'(r_{\alpha,\sigma,\theta})\geq \underline m$ and because $\alpha$ was chosen small enough to ensure $r_{\alpha,\sigma,\theta}>\sqrt{\frac{\sigma(d-1)}2}.$ 

As a consequence, $p_{\alpha,\sigma}'$ is locally increasing around $r_{\alpha,\sigma,\theta}$, which allows us to define 
$$A_1:=\sup\{A\in \R_+^*\,, p_{\alpha,\sigma}'\geq p_{\alpha,\sigma}'(r_{\alpha,\sigma,\theta})\text{ in }[r_{\alpha,\sigma,\theta};r_{\alpha,\sigma,\theta}+A]\}>0.$$ We are going to prove that $A_1=+\infty$. Let us first compute $g'(r)$:
\begin{align*}
g'(r)&=-f'(p_{\alpha,\sigma})p_{\alpha,\sigma}'+\left(\frac{2r}\sigma -\frac{d-1}{r}\right)p_{\alpha,\sigma}''+p_{\alpha,\sigma}'\left(\frac{2}\sigma +\frac{d-1}{r^2}\right)\\
&=p_{\alpha,\sigma}'\left( -f'(p_{\alpha,\sigma})+\frac2{\sigma}+\left(\frac{2r}\sigma-\frac{d-1}{r}\right)\left(\frac{2r}\sigma -\frac{f(p_{\alpha,\sigma})}{p_{\alpha,\sigma}'}-\frac{d-1}{r}\right)+\frac{d-1}{r^2}\right)\\
&=p_{\alpha,\sigma}'\left( -f'(p_{\alpha,\sigma})+\frac2{\sigma}+\left(\frac{2r}\sigma-\frac{d-1}{r}\right)^2-\left(\frac{2r}\sigma-\frac{d-1}{r}\right)\frac{f(p_{\alpha,\sigma})}{p_{\alpha,\sigma}'}+\frac{d-1}{r^2}\right)\\
&=p_{\alpha,\sigma}' G(r,p_{\alpha,\sigma},p_{\alpha,\sigma}')
\end{align*}

with
$$G(r,p,v):=\left( -f'(p)+\frac2{\sigma}+\left(\frac{2r}\sigma-\frac{d-1}{r}\right)^2-\left(\frac{2r}\sigma-\frac{d-1}{r}\right)\frac{f(p)}{v}+\frac{d-1}{r^2}\right).$$
If we can guarantee that 
\begin{equation}\label{Eq:Srs}\forall v\geq \underline m\,, \forall r\geq r_{\alpha,\sigma,\theta}\,, G(r,p,v)\geq 0,\end{equation}
then we are done by considering the system 
$$p_{\alpha,\sigma}''=g\,, g'=p_{\alpha,\sigma}'G,$$ and we will have established that $A_1=+\infty$. Let us now prove that \eqref{Eq:Srs} holds for $\alpha>0$ small enough: extending if needed $f$ into a $W^{1,\infty}$ function outside of $[0,1]$, we see that this condition is guaranteed if, for any $r\geq r_{\alpha,\sigma,\theta}$ we have

\begin{equation}
||f'||_{L^\infty}+\left(\frac{2r}\sigma-\frac{d-1}{r}\right)\frac{||f||_{L^\infty}}{\underline m}\leq \frac2\sigma+\left(\frac{2r}{\sigma}-\frac{d-1}{r}\right)^2+\frac{d-1}{r^2}.
\end{equation}
However, this inequality always holds for any $r\geq r_{\alpha,\sigma,\theta}$, provided $r_{\alpha,\sigma,\theta}$ is large enough, which is in turn guaranteed provided $\alpha>0$ is small enough. With such an $\alpha$ fixed, we have $A_1=+\infty$, and so we have 
$$\forall r\geq r_{\alpha,\sigma,\theta}\,, p_{\alpha,\sigma}'(r)\geq p_{\alpha,\sigma}'(r_{\alpha,\sigma,\theta})\geq \underline m>0.$$ As a byproduct, we get 
$$\forall r\geq r_{\alpha,\sigma,\theta}\,, g(r)\geq g(r_{\alpha,\sigma,\theta})>0,$$ so that, integrating the inequality
$$(p_{\alpha,\sigma}')'(r)=g(r)\geq g(r_{\alpha,\sigma,\theta})>0$$ we obtain 
$$p_{\alpha,\sigma}'(r)\underset{r\to \infty}\rightarrow +\infty.$$

\end{proof}

\begin{proof}[Proof of Claim \ref{NCL4}]
The existence of such an $R(\sigma,1)$ is an immediate consequence of Claim \ref{NCL3}.: indeed, choosing $\alpha>0$ small enough so that the conclusions of Claim \ref{NCL3} are satisfied and keeping in mind that $p_{\alpha,\sigma}$ is increasing on $[0;r_{\alpha,\sigma,\theta}]$, it suffices to define $R(\sigma,1,\alpha)$ as the first solution of 
$$p_{\alpha,\sigma}(R(\sigma,1,\alpha))=1$$ to obtain the desired conclusion. Let us now fix such an $\overline \alpha>0$ and define $R(\sigma,1):=R(\sigma,1,\overline \alpha)$.

To obtain the same conclusion for any $R>R(\sigma,1)$, it suffices to observe that, first, if $0<\alpha<\overline \alpha$, the solution $p_{\alpha,\sigma}$ satisfies the conclusion of Claim \ref{NCL3} and that $p_{\alpha,\sigma}<p_{\overline \alpha,\sigma}$ by a standard comparison argument, so that $\alpha \mapsto R(\sigma,1,\alpha)$ is non-increasing, and, second, that
 $R({\sigma,1,\alpha})\underset{\alpha\to 0}\rightarrow +\infty$. This behaviour as $\alpha \to 0^+$ is a simple consequence of the fact that $$R(\sigma,1,\alpha)>r_{\alpha,\sigma,\theta}\underset{\alpha \to 0^+}\rightarrow +\infty.$$ 
\end{proof}

We now define 
\begin{multline}
R_\sigma^*:=\inf\left\{R_1>0\,, \forall R'\geq R_1, \text{ there exists a non-trivial radially}\right.\\\left.\text{ symmetric non-decreasing solution of \eqref{NewEq1} in $\mathbb B(0;R')$}\right\}.\end{multline}

\paragraph{Step 4} In this final step, we prove the following Claim:
\begin{claim}\label{NCL5}
$$R_\sigma^*\underset{\sigma \to 0}\rightarrow 0.$$
\end{claim}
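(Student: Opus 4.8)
The plan is to bound $R_\sigma^*$ from above by the threshold radius $R(\sigma,1,\alpha)$ of Claim \ref{NCL4} attached to one \emph{fixed} value of the shooting parameter — for definiteness $\alpha=\theta/4$ — and to prove $R(\sigma,1,\theta/4)\to 0$ as $\sigma\to 0$. First observe that, although Claims \ref{NCL1}--\ref{NCL4} were stated "for $\alpha$ small enough", their only genuine smallness requirements on $\alpha$ are that $r_{\alpha,\sigma,\theta/2}>\sqrt{\sigma(d-1)/2}$ and that the function $G$ of Claim \ref{NCL3} be nonnegative on $[r_{\alpha,\sigma,\theta},\infty)$; both hold for $\alpha=\theta/4$ once $\sigma$ is small, because on $[0,\sqrt{\sigma(d-1)/2}]$ the drift coefficient $\tfrac{2r}\sigma-\tfrac{d-1}r$ is nonpositive, whence $p_{\alpha,\sigma}''\leq\Vert f\Vert_{L^\infty}$ and $p_{\alpha,\sigma}(r)\leq\tfrac\theta4+\tfrac12\Vert f\Vert_{L^\infty}r^2<\tfrac\theta2$ there, forcing $r_{\alpha,\sigma,\theta/2}>\sqrt{\sigma(d-1)/2}$ (and then the inspection in the proof of Claim \ref{NCL3} shows $G\geq 0$ for every $r\geq\sqrt{\sigma(d-1)/2}$ when $\sigma$ is small). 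Granting this, Claim \ref{NCL4} produces a non-trivial radial non-decreasing solution of \eqref{NewEq1} on $\mathbb B(0;R)$ for every $R\geq R(\sigma,1,\theta/4)$, so $R_\sigma^*\leq R(\sigma,1,\theta/4)$; and since $p_{\theta/4,\sigma}$ is increasing on $(0,R(\sigma,1,\theta/4))$ and equals $1$ at the endpoint, it remains only to show that for each $\e>0$ one has $p_{\theta/4,\sigma}(\e)\geq 1$ once $\sigma$ is small enough.

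From now on fix $\alpha=\theta/4$. The mechanism is that above the harmless scale $r_2:=\sqrt{\sigma(d-1)}$ one has $\tfrac{2r}\sigma-\tfrac{d-1}r\geq\tfrac r\sigma$, which is enormous as $\sigma\to 0$; inserted into $p_{\alpha,\sigma}''=-f(p_{\alpha,\sigma})+\bigl(\tfrac{2r}\sigma-\tfrac{d-1}r\bigr)p_{\alpha,\sigma}'$ this makes $p_{\alpha,\sigma}'$ grow like $e^{r^2/(2\sigma)}$, so the whole excursion of $p_{\alpha,\sigma}$ from $\theta/4$ up to $1$ is squeezed into a radius that shrinks with $\sigma$. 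I would run this in two stages. Let $m_0:=\min_{[\theta/4,3\theta/4]}(-f)>0$. While $p_{\alpha,\sigma}\in[\theta/4,3\theta/4]$: from $(r^{d-1}p_{\alpha,\sigma}')'\geq m_0 r^{d-1}$ one gets $p_{\alpha,\sigma}'(r)\geq\tfrac{m_0}d r$ on $[0,r_2]$, in particular $p_{\alpha,\sigma}'(r_2)\gtrsim\sqrt\sigma$, and then $p_{\alpha,\sigma}''\geq\tfrac r\sigma p_{\alpha,\sigma}'$ on $[r_2,r_{\alpha,\sigma,3\theta/4}]$ gives $p_{\alpha,\sigma}'(r)\gtrsim\sqrt\sigma\,e^{(r^2-r_2^2)/(2\sigma)}$; integrating this shows $p_{\alpha,\sigma}$ cannot remain below $3\theta/4$ on a fixed interval $[0,\e/4]$ once $\sigma$ is small, i.e.\ $r_{\alpha,\sigma,3\theta/4}<\e/4$. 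The complementary one-sided Gr\"onwall bound $p_{\alpha,\sigma}'(r)\leq\tfrac12\Vert f\Vert_{L^\infty}\sqrt{\pi\sigma}\,e^{r^2/\sigma}$ (obtained the same way, using $\tfrac{2r}\sigma-\tfrac{d-1}r\leq\tfrac{2r}\sigma$) yields the matching \emph{lower} bound $r_{\alpha,\sigma,3\theta/4}\gtrsim\sqrt{\sigma\ln(1/\sigma)}$, which prevents $p_{\alpha,\sigma}'(r_{\alpha,\sigma,3\theta/4})$ from being too small.

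In the second stage, for $p_{\alpha,\sigma}\in[3\theta/4,1]$ the term $-f(p_{\alpha,\sigma})$ may become negative but stays $\geq-\Vert f\Vert_{L^\infty}$, so $p_{\alpha,\sigma}''\geq-\Vert f\Vert_{L^\infty}+\tfrac r\sigma p_{\alpha,\sigma}'$ for $r\geq r_2$; since $p_{\alpha,\sigma}'$ is increasing on this range (by Claim \ref{NCL1} up to $r_{\alpha,\sigma,\theta}$, and by Claim \ref{NCL3} — with the universal bound $p_{\alpha,\sigma}'\geq\underline m$ — beyond it) and $r_{\alpha,\sigma,3\theta/4}\gtrsim\sqrt{\sigma\ln(1/\sigma)}$, one checks that $\tfrac r\sigma p_{\alpha,\sigma}'\geq 2\Vert f\Vert_{L^\infty}$ there, hence $p_{\alpha,\sigma}''\geq\tfrac12\tfrac r\sigma p_{\alpha,\sigma}'$. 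Integrating this twice shows that $p_{\alpha,\sigma}$ must exceed $1$ at some radius $r_{\alpha,\sigma,3\theta/4}+O\!\bigl(\sqrt{\sigma\ln(1/\sigma)}\bigr)$, which is $<\e$ for $\sigma$ small; a standard continuation argument (were $R(\sigma,1,\theta/4)>\e$, this estimate would be valid on $[r_{\alpha,\sigma,3\theta/4},\e]$ and would be contradicted) then gives $R(\sigma,1,\theta/4)\leq\e$ for $\sigma$ small, hence $R_\sigma^*\to 0$.

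The difficulty is purely technical rather than conceptual: $f$ vanishes at both $0$ and $\theta$, so the useful lower bound $-f\geq m_0>0$ is available only while $p_{\alpha,\sigma}$ stays in a compact subinterval of $(0,\theta)$. One is therefore forced to split the blow-up into the two stages above — first up to $3\theta/4$ using $-f\geq m_0$, then up to $1$ using only boundedness of $f$ together with the fact that $p_{\alpha,\sigma}'$ is by then already large — and to carry \emph{both} a lower and an upper Gr\"onwall estimate for $p_{\alpha,\sigma}'$, so that the statement "the transition radius tends to $0$" becomes quantitative (the transition radius turns out to be of order $\sqrt{\sigma\ln(1/\sigma)}$).
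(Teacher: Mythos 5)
Your argument is correct, but it is a genuinely different route from the paper's. The paper proves Claim \ref{NCL5} by contradiction: assuming $R_{\sigma_k}^*\geq \underline R>0$ along a sequence $\sigma_k\to 0$, it fixes an arbitrary $\alpha$ and splits into two cases according to whether the first crossing radii $r_k=r_{\alpha,\sigma_k,\theta}$ accumulate at $0$ or not; in the first case the mean value theorem produces a point where $p_k'$ blows up and a first-order comparison ODE propagates this lower bound up to $\underline R/2$, while in the second case the explicit integral representation \eqref{Eq:Integ2} together with the Laplace method forces $p_k'\to\infty$ uniformly on a fixed subinterval — either way $p_k$ reaches $1$ before $\underline R/2$, contradicting the definition of $\underline R$. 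You instead argue directly and quantitatively: you freeze $\alpha=\theta/4$, re-derive the hypotheses of Claims \ref{NCL1}--\ref{NCL4} as smallness conditions on $\sigma$ rather than on $\alpha$ (your verification that $r_{\theta/4,\sigma,\theta/2}>\sqrt{\sigma(d-1)/2}$ via $p''\leq\Vert f\Vert_{L^\infty}$ below the turning radius, and that $G\geq 0$ because $2/\sigma$ dominates the cross term, is sound), and then run a two-sided Gr\"onwall analysis showing the transition radius is $O(\sqrt{\sigma\ln(1/\sigma)})$. The one step that deserves emphasis — and which you correctly flag — is that the \emph{lower} bound $r_{\alpha,\sigma,3\theta/4}\gtrsim\sqrt{\sigma\ln(1/\sigma)}$ is not decorative: with only $r_3\gtrsim\sqrt\sigma$ the product $\tfrac{r_3}\sigma p'(r_3)$ is merely bounded below by a constant, and the logarithmic gain is what makes it diverge and hence dominate $2\Vert f\Vert_{L^\infty}$ so that Stage 2 closes. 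What your approach buys is an explicit rate for $R_\sigma^*$ and the elimination of the case analysis; what the paper's buys is that it never has to requantify the ``$\alpha$ small enough'' conditions of Claims \ref{NCL2}--\ref{NCL4} in terms of $\sigma$, since it keeps $\alpha$ fixed and lets the contradiction hypothesis do the work.
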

\begin{proof}[Proof of Claim \ref{NCL5}]

\def\r{{r_{\alpha,\sigma,\theta}}}
We argue by contradiction. Assume that there exists a sequence $\{\sigma_k\}_{k\in \N}$ such that 
$$R_{\sigma_k}^*\underset{k\to \infty}{\not \rightarrow}0\,, \sigma_k\underset{k\to \infty}\rightarrow 0.$$ With a slight abuse of notation, we assume that 
$$\underline R:=\underline \lim_{k\to\infty} R_{\sigma_k}^*>0.$$
Let $\alpha>0$ be fixed. From Claim \ref{NCL1} we know that, for every $\sigma>0$, there exists $\r>0$ such that 
$$p_{\alpha,\sigma}(\r)=\theta\,, p_{\alpha, \sigma} \text{ is increasing on } [0;\r].$$ Let 
$$p_k:=p_{\alpha,\sigma_k}\,,\quad r_k:=r_{\alpha,\sigma_k,\theta}.$$
We reach a contradiction by distinguishing two cases:
\begin{enumerate}
\item\underline{0 is an accumulation point of $\{r_k\}$:} Assume that, up to a subsequence, we have
$$r_k\underset{k\to \infty}\rightarrow 0.$$
The Mean Value Theorem ensures, for any $k\in \N$, the existence of $y_k\in (0;r_k)$ such that 
$$p_k'(y_k)=\frac{\theta-\alpha}{r_k}\underset{k\to \infty}\rightarrow +\infty.$$ 

We first note that we can obtain a crude estimate on $y_k$, namely, that, for $k$ large enough, we have 
\begin{equation}\label{W}y_k\geq \sqrt{\frac{\sigma_k(d-1)}2}=:r_k^*.\end{equation}
To get this estimate, we note that, on $\left(0;r_k^*\right)$, we have 
$$\left(\frac{2r}\sigma-\frac{d-1}r\right)<0,$$ and so 
$$p_k''\leq f(p_k).$$ It thus follows that 
$$\forall r \in [0;r_k^*]\,, p_k'(r)\leq r \Vert f\Vert_{L^\infty}\leq r_k^* \Vert f\Vert_{L^\infty}.$$ Since 
$$p_k'(y_k)\underset{k\to \infty}\rightarrow +\infty,$$ it follows that, for $k$ large enough, $y_k\geq r_k^*$. We claim that this implies that $p_k'\to+\infty$ uniformly in $\left[y_k;\frac{\underline R}2\right]$  as made precise in the following statement:
\begin{equation}\label{Q} \forall M\in \R_+^*\,, \exists k_M\in \N\,, \forall k\geq k_M\,, p_k'\geq M\text{ in }\left[y_k;\frac{\underline R}2\right].\end{equation}

To prove \eqref{Q}, we first note that $p_k'(y_k)\underset{k\to \infty}\rightarrow +\infty$ implies
$$p_{k}'(r_k)\underset{k\to \infty}\rightarrow +\infty.$$ Indeed, this follows from the fact that, since $\left(\frac{2r}{\sigma_k}-\frac{d-1}{r}\right) >0$ in $(y_k;+\infty)$ (because $y_k>r_k^*$), since $p_k'\geq 0$ in $(0;r_k)$ (because of Claim \ref{NCL1}) and since $f(p_k)\leq 0$ (because $p_k\leq \theta$ in $(y_k;r_k)$), we have 
$$p_k''=\left(\frac{2r}{\sigma_k}-\frac{d-1}{r}\right) p_k'-f(p_k)\geq 0 \text{ in }(y_k;r_k)$$ and so 
$$p_k'(r_k)\geq p_k'(y_k).$$ 

To prove that this implies \eqref{Q}, we use a comparison principle on $\left(r_k;\frac{\underline R}2\right)$: define $q_k$ as the solution of 
$$\begin{cases}q_k'=-f(p_k)\text{ in }\left(r_k;\frac{\underline R}2\right),\\ q_k(r_k)=p_k'(r_k).\end{cases}$$ A crude bound on $q_k$ is 
$$\forall t \in  \left(r_k;\frac{\underline R}2\right)\,, q_k(t)\geq q_k(r_k)-(t-r_k)\Vert f\Vert_{L^\infty}.$$ Since $r_k\underset{k\to \infty}\rightarrow 0$ and since $q_k(r_k)\underset{k\to \infty}\rightarrow +\infty$, $q_k$ diverges to $\infty$ uniformly on $(r_k;\frac{\underline{R}}2)$. A simple consequence is that $q_k>0$ for $k$ large enough.

We now define $z_k:=q_k-p_k'.$ We immediately obtain that 
$$z_k'-\left(\frac{2r}{\sigma_k}-\frac{d-1}{r}\right) z_k=-\left(\frac{2r}{\sigma_k}-\frac{d-1}{r}\right) q_k.$$ Then again, since 
$r_k\geq y_k$ it follows that $\left(\frac{2r}{\sigma_k}-\frac{d-1}{r}\right) >0$ in $\left(r_k;\frac{\underline R}2\right)$, hence 
$$z_k'-\left(\frac{2r}{\sigma_k}-\frac{d-1}{r}\right) z_k<0 \text{ in } \left(r_k;\frac{\underline R}2\right).$$
Integrating this differential inequality yields that
$$r\mapsto e^{-\frac{r^2}\sigma}r^{d-1}z_k\text{ is non-increasing on }\left[r_k;\frac{\underline R}2\right],$$ hence, for any $r\in \left[r_k;\frac{\underline R}2\right]$,
$$z_k(r)\leq e^{\frac{r^2}\sigma}r^{1-d} e^{-\frac{r_k^2}\sigma}r_k^{d-1}z_k(r_k)=0\text{ because }z_k(r_k)=0.$$
As a consequence $z_k<0$.

It follows that $$p_k'\geq q_k \text{ in } \left[r_k;\frac{\underline R}2\right]$$ and thus converges uniformly to $+\infty$ in that interval. As a consequence, the equation $p_k(x)=1$ has a unique root $x_k\in\left[r_k;\frac{\underline R}2\right]$ for any $k$ large enough, and is increasing in $(0;x_k)$, which is in contradiction with the definition of $\underline R$.

\item\underline{0 is not an accumulation point of $\{r_k\}$:} 

Assuming $0$ is not an accumulation point of $\{r_k\}_{k\in \N}$, a contradiction ensues in the following manner: we know that there thus exists a point $y>0$ such that
$$y\leq \underset{k\to \infty}{\underline \lim} r_k\,, \lim_{k\to \infty} p_k(y)\leq \theta-\delta$$ for some $\delta>0$. Then we note that, by explicit  integration of$$-p_k''+\left(\frac{2r}{\sigma_k}-\frac{d-1}{r}\right)p_k'=f(p_k)$$ 
we get 
\begin{equation}\label{Eq:Integ2}
r^{d-1}p_k'(r)=e^{\frac{r^2}\sigma}\int_0^re^{\frac{-t^2}\sigma} (-f(p_k(t)))t^{d-1}dt.
\end{equation}
Since $y\in (0;r_k)$ for every $k$ large enough and since $p_k$ is increasing in $(0;r_k)$, we have $\alpha\leq p_k\leq \theta-\delta$ for every $t\in [0;y]$, so that 
$$\exists \delta'>0\,, f(p_k)\leq -\delta'\text{ on }[ 0;y].$$
Plugging this in the integral formulation \eqref{Eq:Integ2} gives the lower bound
$$r^{d-1}p_k'(r)\geq \delta' e^{\frac{r^2}\sigma}\int_0^re^{-\frac{t^2}\sigma}t^{d-1}dt,\quad r\in[0,y]$$ Let us now study the interval $\left[\frac{y}2;y\right]$ and prove that $p_k'$ converges uniformly to $+\infty$ in $\left[\frac{y}2;y\right]$, which would immediately yield the desired contradiction.

We note that, for any $r\in \left[\frac{y}2;y\right]$, we have

$$\int_0^re^{-\frac{t^2}\sigma}t^{d-1}dt\geq\int_0^{y/2}e^{-\frac{t^2}\sigma}t^{d-1}dt\underset{\sigma\to 0}\sim C\sqrt{\sigma^d},$$ for some $C>0$ by the Laplace method (recalled in detailed below \eqref{Laplace}), which immediately gives 
$$p_k'(r)\underset{k\to \infty}\rightarrow +\infty \text{ uniformly in }\left[\frac{y}{2};y\right].$$ The conclusion follows.

\end{enumerate}
\end{proof}
\paragraph{Proof of Lemma \ref{NewLem2}}
Since $R>0$, there exists $\underline \sigma>0$ such that, for any $\sigma\leq \underline \sigma$, $R_\sigma^*<R$. As a consequence of the definition of $R_\sigma^*$, a non-trivial solution of \eqref{NewEq1} exists in $\mathbb B(0;R)$ for any $\sigma\leq \underline \sigma.$ 

\end{proof}

\subsection{Proof of Theorem \ref{NewTheo}: blocking phenomenon towards 0 }
The  relevant equation is, in this case,
\begin{equation}\label{NewEq11}
\begin{cases}
-\Delta p-\frac2\sigma \langle \nabla p,\frac{\nabla N}N\rangle=f(p)&\text{ in }\Omega;
\\p=0&\text{ on }\partial \O.\end{cases}\end{equation}
A non-trivial solution $p$ to this equation is called admissible if $0\leq p\leq 1$.

\begin{lemma}\label{NewLem11}
Assume $N$ satisfies \eqref{AssumptionDrift2}. There exists $\sigma_{N,0}>0$ such that, for any $\sigma \in (0;\sigma_{N,0})$, there exists a non-trivial admissible solution of \eqref{NewEq11}.
\end{lemma}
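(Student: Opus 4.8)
The plan is to obtain a non-trivial admissible solution of \eqref{NewEq11} as a non-trivial global minimiser of a weighted Dirichlet energy. The key observation is that, although assumption \eqref{AssumptionDrift2} gives no control whatsoever on $\n N/N$ — so the super-solution comparison used for the blocking towards $1$ (Lemma \ref{NewLem3}) is unavailable here — it does force the weight $N^{2/\sigma}$ to be squeezed between the two Gaussians $e^{-c_1\|x\|^2/\sigma}$ and $e^{-c_0\|x\|^2/\sigma}$, hence to concentrate at the origin on the scale $\sqrt\sigma$ as $\sigma\to0$; this concentration is precisely what prevents $p\equiv0$ from being an energy minimiser.

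First I would recast \eqref{NewEq11} in divergence form: multiplying by $N^{2/\sigma}>0$ turns it into $-\n\cdot(N^{2/\sigma}\n p)=N^{2/\sigma}f(p)$ in $\Omega$, $p=0$ on $\partial\Omega$, which is the Euler--Lagrange equation of
\[
\mathscr J_\sigma(p):=\frac12\int_\Omega N^{2/\sigma}|\n p|^2-\int_\Omega N^{2/\sigma}F(p),\qquad F(p):=\int_0^p f(s)\,ds,
\]
on $W^{1,2}_0(\Omega)$. After extending $f$ to a bounded $\mathscr C^1$ function that vanishes outside $[0,1]$ — so that $F$ is bounded, $F\equiv0$ on $(-\infty,0]$ and $F\equiv F(1)=\int_0^1 f>0$ on $[1,+\infty)$ — the functional $\mathscr J_\sigma$ is coercive and sequentially weakly lower semicontinuous on $W^{1,2}_0(\Omega)$: here one uses that $N^{2/\sigma}$ is bounded below by a positive constant on $\overline{\mathbb B(0;R)}$ and that $F$ is bounded, together with the Rellich embedding. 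Hence $\mathscr J_\sigma$ admits a global minimiser $p_\sigma$; testing the equation against $p_\sigma^-$ and against $(p_\sigma-1)^+$ gives $0\le p_\sigma\le 1$, so $p_\sigma$ solves \eqref{NewEq11} and is admissible (interior regularity being routine elliptic theory since $N\in\mathscr C^1$). As $\mathscr J_\sigma(0)=0$, it remains only to produce, for all small $\sigma$, a competitor with negative energy.

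The core step is therefore the estimate $\inf_{W^{1,2}_0(\Omega)}\mathscr J_\sigma<0$ for $\sigma$ small, which I would establish with a single $\sigma$-independent test function. Take $v(x):=\psi(2x/R)$ where $\psi\in\mathscr C^\infty_c(\mathbb R^d)$ is radial, $0\le\psi\le1$, $\psi\equiv1$ on $\mathbb B(0;1)$ and $\psi\equiv0$ outside $\mathbb B(0;3/2)$; thus $v\in\mathscr C^\infty_c(\Omega)$, $v\equiv1$ on $\mathbb B(0;R/2)$, and $\n v$ is supported in the annulus $\{R/2\le\|x\|\le 3R/4\}$. On the support of $\n v$ the upper bound in \eqref{AssumptionDrift2} gives $N^{2/\sigma}\le e^{-c_1R^2/(4\sigma)}$, so the Dirichlet part of $\mathscr J_\sigma(v)$ is bounded by $C_1 e^{-c_1R^2/(4\sigma)}$, exponentially small in $1/\sigma$. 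On the other hand $v\equiv1$ and $F(v)=F(1)>0$ on $\mathbb B(0;R/2)$, and the lower bound in \eqref{AssumptionDrift2} with the rescaling $x=\sqrt\sigma y$ yields $\int_{\mathbb B(0;R/2)}N^{2/\sigma}\ge\int_{\mathbb B(0;R/2)}e^{-c_0\|x\|^2/\sigma}\,dx\ge c\,\sigma^{d/2}$ for some $c>0$ and all small $\sigma$, while the contribution of the potential term over the annulus is again exponentially small. Dividing $\mathscr J_\sigma(v)$ by $\sigma^{d/2}$ and letting $\sigma\to0$ leaves only a strictly negative limit, so $\mathscr J_\sigma(v)<0$ for every $\sigma$ below some threshold $\sigma_{N,0}$. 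Consequently $p_\sigma\not\equiv0$, which proves Lemma \ref{NewLem11}; one also checks from the optimality conditions that $\max_\Omega p_\sigma>\theta$, so that this non-trivial solution indeed acts as a barrier blocking controllability to $0$.

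I expect the main difficulty to be exactly this scaling balance: one must arrange that the region carrying the weight's mass — a ball of radius of order $\sqrt\sigma$, of weighted measure of order $\sigma^{d/2}$ — beats the region carrying $\n v$. This is achieved by forcing $v$ to equal $1$, where $F=F(1)>0$, on a \emph{fixed} ball $\mathbb B(0;R/2)$ that already contains the concentration scale, and by confining $\n v$ to an annulus bounded away from the origin, on which the Gaussian upper bound makes the Dirichlet energy exponentially small in $1/\sigma$; the competition between $\sigma^{d/2}$ and $e^{-c_1R^2/(4\sigma)}$ is then settled in favour of the potential term. The point to emphasise is that only $N^{2/\sigma}$, and not $\n N/N$, appears in the divergence-form energy, which is precisely why the weak hypothesis \eqref{AssumptionDrift2} is enough and why a variational argument, rather than a pointwise sub-/super-solution construction, is the natural route here.
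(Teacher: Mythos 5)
Your proposal is correct and follows essentially the same route as the paper: pass to the divergence form $-\n\cdot(N^{2/\sigma}\n p)=N^{2/\sigma}f(p)$, minimise the associated weighted energy over $W^{1,2}_0(\Omega)$ after extending $f$ by zero outside $[0,1]$, and beat $\mathscr J_\sigma(0)=0$ with a fixed test function equal to $1$ near the origin whose gradient is supported in an annulus, so that the Dirichlet term is $O(e^{-c/\sigma})$ while the potential term is bounded below by a constant times $F(1)\,\sigma^{d/2}$. The only (immaterial) differences are that the paper obtains the $\sigma^{d/2}$ asymptotics via Watson's lemma rather than your direct rescaling, and secures $0\le p^*\le 1$ by truncating the minimising sequence rather than by testing the Euler--Lagrange equation.
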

Point (2) of the Theorem is an immediate consequence of this Lemma.

 We will  use the Laplace method to prove that, for any $R>0$, there exists $\sigma_N>0$ such that, for any $\sigma\in (0;\sigma_N)$, the equation 
\begin{equation}\label{LK}
\begin{cases}
-\Delta p-\frac2\sigma \left\langle \frac{\n N}N,\n p\right\rangle=f(p)\text{ in }\O\,, \\ p=0\text{ on }\partial \O\end{cases}\end{equation} has a non trivial solution. In order to do so, we use the classical method of \cite{LionsBerestycki}. First of all, let us note that \eqref{LK} admits a variational formulation. Indeed, multiplying \eqref{LK} by $N^{\frac2\sigma}$ we obtain that any solution $p$ of \eqref{LK} satisfies
$$-N^{\frac2\sigma}\Delta p-\frac2\sigma N^{\frac2\sigma-1}\langle\n N,\n p\rangle=N^{\frac2\sigma}f(p)$$ that is, in other words, 
$$-\n \cdot\left(N^{\frac2\sigma}\n p\right)=N^{\frac2\sigma}f(p).$$ This leads to introducing the natural energy functional
$$\mathcal E_{N,\sigma}:W^{1,2}_0(\O)\color{black} \ni p\color{black}\mapsto \frac12\int_\O N^{\frac2\sigma}|\n p|^2-\int_\O N^{\frac2\sigma}F(p).$$
\color{black}
However, depending on the non-linearity $f$, this functional may not be coercive or admit minimisers. To overcome this difficulty,  we first follow the strategy of \cite{LionsBerestycki} and assume that $f$ was extended by 0 outside of $[0;1]$ (as we are looking for solutions between 0 and 1, if we can prove that, for such an extension, we have a minimiser between 0 and 1, then the only thing that matters is the definition of $f$ on $[0;1]$, so that the chosen extension is unimportant). We can now prove that $\mathcal E_{N,\sigma}$ admits a minimiser $p^*$ in $W^{1,2}_0(\O)$ that further satisfies $0\leq p^*\leq 1$: consider a minimising sequence $\{p_k\}_{k\in \N}\in W^{1,2}_0(\O)$. Define, for any $k\in \N$, 
\[ \tilde p_k:=p_k\left(\mathds 1_{\{p_k\geq 0\}}+\mathds 1_{\{p_k\leq 1\}}\right)\in W^{1,2}_0(\O).\]
As for any $x\leq 0$ we have $F(x)=0$ and, for any $x\geq 1$ $F(x)=F(1)$ we obtain 
\[\forall k\in \N\,, \int_\O N^{\frac2\sigma}F(p_k)=\int_\O N^{\frac2\sigma}F(\tilde p_k).\] Similarly,
\[\forall k\in \N\,, \int_\O N^{\frac2\sigma}|\n p_k|^2\geq\int_\O N^{\frac2\sigma}|\n \tilde p_k|^2,\]
so that, for any $k\in \N$, $\mathcal E_{N,\sigma}(p_k)\geq \mathcal E_{N,\sigma}(\tilde p_k).$ Hence, $\{\tilde p_k\}_{k\in \N}$ is a minimising sequence, which is moreover bounded in $L^\infty(\O)$. Consequently, it follows from the definition of $\mathcal E_{N,\sigma}$ that $\{\tilde p_k\}_{k\in \N}$ is bounded in $W^{1,2}_0(\O)$, and hence, up to a subsequence, converges (strongly in $L^2(\O)$, weakly in $W^{1,2}_0(\O)$) to a minimiser $p^*$ that further satisfies $0\leq p^*\leq 1$ almost everywhere.

The fact that \color{black}the \color{black} minimum is non-zero is a consequence of the next Lemma:

\begin{lemma}\label{Le:Energie}
Assume $N$ satisfies \eqref{AssumptionDrift2}. There exists $\sigma_{N,0}>0$ such that, for any $\sigma\in (0;\sigma_{N,0})$, there holds
$$\min_{p\in W^{1,2}_0(\O)}\mathcal E_{N,\sigma}(p)<0.$$
\end{lemma}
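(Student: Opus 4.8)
The plan is to exhibit an explicit competitor $p_\sigma\in W^{1,2}_0(\O)$ whose energy $\mathcal E_{N,\sigma}(p_\sigma)$ is strictly negative for $\sigma$ small. The natural choice, in the spirit of \cite{LionsBerestycki,BRZ}, is a function that is identically equal to $1$ on a large ball $\mathbb B(0;r_0)$ concentric with $\O$ (with $r_0<\rho_\O$, e.g.\ $r_0=\rho_\O/2$), and then decays linearly (or smoothly) from $1$ to $0$ on a thin annulus $\mathbb B(0;r_0+h)\setminus\mathbb B(0;r_0)$, staying $0$ outside. On the "plateau" $\{p_\sigma=1\}$ the gradient vanishes, so the Dirichlet part of $\mathcal E_{N,\sigma}$ only picks up a contribution from the annulus, while the potential part gives $-\int_{\mathbb B(0;r_0)}N^{2/\sigma}F(1)$ plus a controlled annulus term. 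Since $\int_0^1 f>0$, we have $F(1)>0$, so the plateau produces a negative contribution $-F(1)\int_{\mathbb B(0;r_0)}N^{2/\sigma}$.

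The key quantitative point is a comparison of the two relevant weighted integrals using the Laplace method. Under \eqref{AssumptionDrift2} we have $e^{-c_0 r^2/2}\le N(x)\le e^{-c_1 r^2/2}$, hence $N^{2/\sigma}$ is squeezed between $e^{-c_0 r^2/\sigma}$ and $e^{-c_1 r^2/\sigma}$. By Laplace's method, for any $0<a<b$,
\begin{equation*}
\int_{\mathbb B(0;a)}N^{2/\sigma}\geq \int_{\mathbb B(0;a)}e^{-c_0\|x\|^2/\sigma}\,dx = \omega_{d-1}\int_0^a e^{-c_0 r^2/\sigma}r^{d-1}\,dr \underset{\sigma\to 0}{\sim} C_d\,\sigma^{d/2},
\end{equation*}
and the same asymptotic (with a possibly different constant, using the upper bound $e^{-c_1 r^2/\sigma}$) controls $\int_{\mathbb B(0;r_0+h)\setminus \mathbb B(0;r_0)}N^{2/\sigma}$ and $\int_{\mathbb B(0;r_0+h)\setminus\mathbb B(0;r_0)}N^{2/\sigma}|\n p_\sigma|^2$ from above. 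Both the gradient term and the annulus potential term are $O\!\left(\int_{\mathbb B(0;r_0+h)\setminus \mathbb B(0;r_0)}N^{2/\sigma}\right)$, and since this annulus is at radius $\ge r_0$ whereas the plateau integral $\int_{\mathbb B(0;r_0/2)}N^{2/\sigma}$ sits at radius $0$, the ratio
\begin{equation*}
\frac{\int_{\mathbb B(0;r_0+h)\setminus \mathbb B(0;r_0)}N^{2/\sigma}}{\int_{\mathbb B(0;r_0/2)}N^{2/\sigma}}\leq \frac{\int_{\mathbb B(0;r_0+h)\setminus \mathbb B(0;r_0)}e^{-c_1\|x\|^2/\sigma}}{\int_{\mathbb B(0;r_0/2)}e^{-c_0\|x\|^2/\sigma}}
\end{equation*}
tends to $0$ exponentially fast as $\sigma\to 0$ (the numerator is bounded by $e^{-c_1 r_0^2/\sigma}$ times the annulus volume, the denominator is $\gtrsim \sigma^{d/2}$). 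Therefore, for $\sigma$ small enough, $\mathcal E_{N,\sigma}(p_\sigma)\leq -\tfrac12 F(1)\int_{\mathbb B(0;r_0/2)}N^{2/\sigma}<0$, which is exactly the claim with $\sigma_{N,0}$ chosen accordingly.

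I expect the main obstacle to be purely bookkeeping rather than conceptual: one must choose the thin annulus width $h$ and verify that the gradient of the cutoff, of size $\sim 1/h$, does not spoil the estimate — i.e.\ that $h^{-2}\int_{\text{annulus}}N^{2/\sigma}$ is still negligible compared to the plateau term. Since the annulus term decays like $e^{-c_1 r_0^2/\sigma}$ while the plateau term decays only polynomially in $\sigma$, any fixed $h$ works, so there is in fact a lot of room; the only care needed is to keep $r_0+h<\rho_\O$ so that the competitor genuinely lies in $W^{1,2}_0(\O)$ after translating the construction to be centered at an incenter of $\O$. A secondary point is that \eqref{AssumptionDrift2} is stated in spherical coordinates for a general (not necessarily radial) $N$, but since only the two-sided Gaussian bounds on $N$ are used, the argument is insensitive to the angular dependence. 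Once Lemma \ref{Le:Energie} is established, combined with the minimiser construction preceding it and the strong maximum principle (to upgrade $p^*\not\equiv 0$, $0\le p^*\le 1$ to a genuine admissible non-trivial solution of \eqref{NewEq11}), Lemma \ref{NewLem11} and point (2) of Theorem \ref{NewTheo} follow.
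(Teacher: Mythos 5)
Your proposal is correct and follows essentially the same route as the paper: the paper also takes a radial plateau function equal to $1$ near the origin with a fixed-width cutoff, bounds the Dirichlet term and the annulus part of the potential term by $O(e^{-c_1\delta^2/\sigma})$ using the upper Gaussian bound in \eqref{AssumptionDrift2}, and bounds the plateau contribution from below by $\sim F(1)\sigma^{d/2}$ via Laplace's method applied to the lower Gaussian bound. The only caveat worth noting is that the plateau must stay centered at the origin (where the lower bound $N\geq e^{-c_0 r^2/2}$ is of order one), not at an arbitrary incenter — which is automatic here since $\O=\mathbb B(0;R)$.
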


Then, since $\mathcal E_{N,\sigma}$ admits a minimum \color{black}at a minimiser which is admissible (i.e. between 0 and 1 almost everywhere) \color{black}, and since Lemma \ref{Le:Energie} ensures this minimum is not identically 0, the existence of a non-trivial solution follows.

 Lemma \ref{Le:Energie} relies on the Laplace method and, more precisely, on the Watson's Lemma; this method is presented in \cite{Laplace,Advanced}.  We briefly recall the following conclusion of this method (see for instance \cite[Theorem 1]{Laplace}): let $r_1>0$. If $\phi:[0;R]\to \color{black}\R\color{black}$ is a $\mathscr C^1$ function such that $\phi(0)\neq 0$, if $\alpha>0$ is a positive parameter, then
\begin{equation}\label{Laplace}\int_0^{r_1} t^{\alpha-1}\phi(t)e^{-c_0\frac{t^2}{\epsilon}}dt\underset{\epsilon \to 0^+}{\sim}M(c_0,d) {\phi(0)}{\epsilon^{\frac{\alpha}2}},\end{equation} where $M(c_0,d)$ is a constant that only depends on $c_0$ and $\alpha$.

\begin{proof}[Proof of Lemma \ref{Le:Energie}]
We fix $c_0,c_1>0$ as given by Assumption \eqref{AssumptionDrift2}.

We construct a function $\eta>0$ such that, whenever $\sigma$ is small enough, 
$$\mathcal E_{N,\sigma}(\eta)<0.$$ To do so, we define $\eta$ as follows: let $\delta\in \left(0;\frac{R}2\right)$. Let $\eta\equiv 1$ in $\mathbb B(0;\delta)$, $\eta\equiv 0 \in \mathbb B(0;R)\backslash \mathbb B(0;2\delta)$. We extend this function to a radially symmetric non-increasing function function $\eta \in \mathscr C^1(\mathbb B(0;R)).$

Let us split the energy $\mathcal E_{N,\sigma}$ in two parts:
\begin{enumerate}
\item The first part corresponds to the gradient: we note that
\begin{align*}
0\leq \int_\O N^{\frac2\sigma}|\n \eta|^2&\leq \int_\O e^{-c_1\frac{||x||^2}\sigma}|\n \eta(x)|^2\color{black}dx\color{black}
\\&=\int_{\mathbb B(0;R)\backslash \mathbb B(0;\delta)}e^{-c_1\frac{||x||^2}\sigma}|\n \eta(x)|^2\color{black}dx\color{black}&\text{ because }\n \eta\equiv 0 \text{ in }\mathbb B(0;\delta)
\\&\leq  |\mathbb B(0;R)| e^{-c_1\frac{\delta^2}\sigma}\Vert \n \eta\Vert_{L^\infty}=M_{\bold I} e^{-c_1\frac{\delta^2}\sigma}.
\end{align*}
Here, $M_{\bold{I}}>0$.
\item The second part is trickier. Let us consider 
$$\int_\O F(\eta (x))N^{\frac2\sigma}(x)\color{black}dx\color{black}.$$
Since $\eta$ is radially non-increasing and since $F(\eta(0))=F(1)>0$, let $r_1>0$ be the first real number such that 
\[\color{black}\forall x\in \O\,, \Vert x\Vert=r_1\Rightarrow F(\eta(x))=0.\color{black}\] 
We then have 
\begin{align*}
\int_\O F(\eta\color{black}(x)\color{black})N^{\frac2\sigma}\color{black}(x)dx\color{black}&=\int_{\mathbb B(0;r_1)} F(\eta\color{black}(x)\color{black})N^{\frac2\sigma}\color{black}(x)dx\color{black}+\int_{\mathbb B(0;R)\backslash \mathbb B(0;r_1)} F(\eta\color{black}(x)\color{black})N^{\frac2\sigma}\color{black}(x)dx\color{black}.
\end{align*}
We note that 
\begin{equation}\label{Decay}\left|\int_{\mathbb B(0;R)\backslash \mathbb B(0;r_1)} F(\eta\color{black}(x)\color{black})N^{\frac2\sigma}\color{black}(x)dx\color{black}\right|\leq \Vert F(\eta)\Vert_{L^\infty}M'e^{-c_1\frac{r_1^2}\sigma}\end{equation} for some constant $M'$ by the same arguments that gave us $M_{\bold{I}}$, so that this part decays exponentially as $\sigma \to 0^+$. For the first part, since $F(\eta)\geq 0$ in $\mathbb B(0;r_1)$ by definition of $r_1$, we have

$$\int_{\mathbb B(0;r_1)} F(\eta\color{black}(x)\color{black})N^{\frac2\sigma}\color{black}(x)dx\color{black}\geq  \int_{\mathbb B(0;r_1)}F(\eta\color{black}(x)\color{black})e^{-c_0\frac{\Vert x\Vert^2}\sigma}\color{black}dx\color{black}.$$ Since all the functions involved are now radially symmetric,
passing to polar coordinates gives
$$ \int_{\mathbb B(0;r_1)}F(\eta\color{black}(x)\color{black})e^{-c_0\frac{\Vert x\Vert^2}\sigma}\color{black}dx\color{black}=S_d \int_0^{r_1} F(\eta(r))r^{d-1}e^{-c_0\frac{ r^2}\sigma}\color{black}dr\color{black},$$ where, with a slight abuse of notation and since $\eta$ is radially symmetric, we keep the notation $F(\eta)$ for its one-dimensional counterpart. In the formula above, $S_d$ only depends on the dimension. From the Laplace method, it follows that
$$ \int_{\mathbb B(0;r_1)}F(\eta\color{black}(x)\color{black})e^{-c_0\frac{\Vert x\Vert^2}\sigma}\color{black}dx\color{black}\underset{\sigma \to 0^+}\sim M''{F(1)}\sigma^{\frac{d}2}$$ for some constant $M''>0$. Combining this with \eqref{Decay}, we obtain, for some constant $M_{\bold{II}}>0$
$$\int_\O F(\eta\color{black}(x)\color{black})N^{\frac2\sigma}\color{black}(x)dx\color{black}\geq M_{\bold{II}}F(1) \sigma^{\frac{d}2}.$$
\end{enumerate}
Combining these two steps, we obtain
\begin{align*}
\mathcal E_{N,\sigma}(\eta)&\leq M_{\bold I} e^{-c_1\frac{\delta^2}\sigma}-M_{\bold{II}}F(1) \sigma^{\frac{d}2}.
\\&<0
\end{align*}whenever $\sigma$ is small enough. The conclusion follows.
\end{proof}
As a consequence, whenever $N$ satisfies Assumption \eqref{AssumptionDrift2}, a non-trivial solution to \eqref{LK} exists.

This concludes the proof of the Theorem.\color{black}

\color{black}
\section{Proof of Theorem \ref{Th:Fin}: unblocking phenomenon}\label{Proof3}
\begin{proof}[Proof of Theorem \ref{Th:Fin}]
The key point is that, when $\sigma>0$ is small enough, we have uniqueness of solutions to 
\begin{equation}\label{Ok}\begin{cases}
-\Delta p-\frac2\sigma \langle\frac{ \n N}N,\n p\rangle=f(p)&\text{ in }\O=\mathbb B(0;R)\,, 
\\ p\equiv a &\text{ on }\partial \O,\end{cases}\end{equation} where $a=0,1$ or $\theta$. Indeed, should this uniqueness hold, a static control $u\equiv a$ will drive any initial condition to $z_a$ (in finite time for $\theta$, and in infinite time for $0$ and $1$).
We first note that the main equation of \eqref{Ok} is equivalent to 
$$-\nabla \cdot (N^{\frac2\sigma}\n p)=N^{\frac2\sigma}f(p).$$

However, defining $$M:=\sup_{x,y}\left|\frac{f(x)-f(y)}{x-y}\right|$$  this new form allows us to state that uniqueness for \eqref{Ok} holds provided 
$$\lambda_\sigma(\O,N):=\inf_{\psi \in W^{1,2}_0(\O)\,, \psi\neq 0}\frac{\int_\O N^{\frac2\sigma}|\n \psi|^2}{\int_\O N^{\frac2\sigma}\psi^2}>M.$$ This is readily seen by taking the difference of two different solutions of \eqref{Ok}. We are now going to prove that, with $N(x)=e^{||x||^2}$, we have 
\begin{equation}\label{Okk}\lambda_\sigma(\O,N)\underset{\sigma \to 0}\rightarrow +\infty.\end{equation}
\eqref{Okk} follows from an elementary observation: we obviously have 
$$\lambda_\sigma(\O,N)\geq \lambda_\sigma(\R^d,N)= \inf_{\psi \in W^{1,2}_0(\R^d)\,, \psi\neq 0}\frac{\int_{\R^d} N^{\frac2\sigma}|\n \psi|^2}{\int_{\R^d}  N^{\frac2\sigma}\psi^2}.$$ By a simple change of variables (since $N:x\mapsto e^{\frac{\Vert x\Vert^2}2}$), we have $$\lambda_\sigma(\R^d,N)=\frac1{\sigma}\lambda_1(\R^d,N)=\frac1\sigma \inf_{\psi \in W^{1,2}_0(\R^d)\,, \psi\neq 0}\frac{\int_{\R^d} N^2|\n \psi|^2}{\int_{\R^d}  N^2\psi^2}$$ and, from \cite[Corollary 1.10]{EscobedoKavian}, 
$$\lambda_1(\R^d,N)>0.$$ \eqref{Okk} follows immediately.

\end{proof}

\color{black}

\section{Proof of Theorem \ref{Th:Rate}: radial drifts}\label{Proof4}

\begin{proof}[Proof of Theorem \ref{Th:Rate}]
Proceeding along the same lines as in Theorem \ref{Th:SHSlow}, we prove that for any drift $N\in \mathscr C^\infty(\O;\R)$ (regardless of whether or not it is the restriction of a radial drift $N$ to the domain $\O$), if condition \eqref{Eq:ConSpectral} holds,
then $z_0\equiv 0$ is the only solution to
\begin{equation}\label{Eq:Diri}
\left\{\begin{array}{ll}
-\Delta p-2\langle \frac{\n N}{N},\n p\rangle=f(p)&\text{ in }\O\,,
\\p=0&\text{ on }\partial \O,
\\0\leq p\leq1,&
\end{array}\right.\end{equation} and note that the main equation is equivalent to 
$$-\n\cdot\left(N^2\n p\right)=f(p)N^2.$$
Indeed, assuming there exists a non-trivial solution $p$ to \eqref{Eq:Diri} then from the mean value theorem, we can write 
$$f(p)=f'(y)p$$ for some function $y$ and, multiplying the equation by $p$ and integrating by parts gives, using the Rayleigh quotient formulation of {\color{black}$\lambda_1^D(\O,N)$}:

\textcolor{black}{
\begin{equation*}
 \lambda_1^D(\O,N)\int_\O N^2p^2\leq \int N^2|\n p|^2=\int_\O N^2 f'(y)p^2\leq \|f'\|_{L^\infty}\int_\O N^2p^2
\end{equation*}
}
which is \color{black} a \color{black} contradiction unless \color{black}$p\equiv 0$\color{black}.

Once we have uniqueness for \eqref{Eq:Diri} we follow, for any initial datum $p_0$, the staircase method explained in the proof of Theorem \ref{Th:SHSlow}: we first set the static control $u=0$, we drive the solution to a $\mathscr C^0$ neighbourhood of $z_0$, then to a steady-state solution of \eqref{Eq:AdvectionControl} in this neighborhood. Thus, we only need to prove the existence of a path of steady states linking $z_0$ to $z_\theta$. In order to prove that such a path of steady states exists under assumption \eqref{Eq:Rate}, we use an energy method.

Let $R>0$ be such that $\O\subset \mathbb B(0;R)$. As in \cite{BRZ}, we define, for any $s\in[0,1]$, $p_s$ as the unique solution of 
\begin{equation}\label{Eq:ConsRate}
\left\{\begin{array}{ll}
-\Delta p_s-2\langle \frac{\n N}{N},\n p_s\rangle=f(p_s)\,,&\text{ in }\mathbb B(0;R)
\\p_s\text{ is radial in }\mathbb B(0;R),&
\\p_s(0)=s\theta.&\end{array}\right.\end{equation}
We notice that the first equation in \eqref{Eq:ConsRate} rewrites as 
$$-\n \cdot(N^2\n p_s)=f(p_s)N^2.$$
Since $N$ is radially symmetric, this amounts to solving, in radial coordinates

\begin{equation}\label{Eq:ConsRate1}
\left\{\begin{array}{ll}
-\frac1{r^{d-1}}\left(r^{d-1}N^2p_s'\right)'=f(p_s)N^2\text{ in }[0;R]\,,&
\\p_s(0)=s\theta\,, p_s'(0)=0.&
\end{array}
\right.
\end{equation}
We prove the existence and uniqueness of solutions to \eqref{Eq:ConsRate1} below but underline that the core difficulty here is ensuring that $$0\leq p_s\leq 1.$$
\begin{claim}\label{Cl:ExisRate}
For any $s\in [0,1]$, there exists a unique solution to \eqref{Eq:ConsRate1}.
\end{claim}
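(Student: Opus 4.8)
The plan is to read \eqref{Eq:ConsRate1} as a singular Volterra integral equation, solve it by a contraction argument, and then obtain the admissibility bound $0\leq p_s\leq1$ from an energy identity in which the decay hypothesis \eqref{Eq:Rate} is exactly what is needed.

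First I would reformulate the problem: since the principal part of \eqref{Eq:ConsRate1} is $-\frac1{r^{d-1}}\bigl(r^{d-1}N^2p_s'\bigr)'$ and $p_s'(0)=0$, integrating twice shows that $p_s$ must satisfy
\[
p_s(r)=s\theta-\int_0^r\frac1{\rho^{d-1}N(\rho)^2}\left(\int_0^\rho t^{d-1}N(t)^2f\bigl(p_s(t)\bigr)\,dt\right)d\rho,
\]
and conversely any continuous solution of this equation solves \eqref{Eq:ConsRate1} with $p_s'(0)=0$. Because $N$ is continuous with $0<\inf_{[0,R]}N\leq\sup_{[0,R]}N<\infty$, the inner integral is $O(\rho^d)$, so the kernel $\rho\mapsto\rho^{1-d}N(\rho)^{-2}\int_0^\rho t^{d-1}N(t)^2\,dt$ is bounded on $[0,R]$ and the right-hand side maps $C^0([0,R])$ into itself. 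Extending $f$ to a globally Lipschitz function on $\R$ (irrelevant for solutions valued in $[0,1]$), a standard Volterra estimate gives $\lVert T^np-T^nq\rVert_{L^\infty}\leq\frac{(CR)^n}{n!}\lVert p-q\rVert_{L^\infty}$ for the associated operator $T$ and some $C>0$, so some iterate of $T$ is a contraction and $T$ has a unique fixed point $p_s\in C^0([0,R])$; differentiating the integral equation then gives $p_s\in C^1([0,R])\cap C^2((0,R])$ with $r\mapsto r^{d-1}N(r)^2p_s'(r)$ of class $C^1$, so $p_s$ is a genuine solution of \eqref{Eq:ConsRate1}.

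The hard part is the bound $0\leq p_s\leq1$. I would set, with $F(p)=\int_0^pf$, the energy $\mathscr E(r):=\frac12p_s'(r)^2+F(p_s(r))$. Writing \eqref{Eq:ConsRate1} as $p_s''=-\bigl(\frac{d-1}r+2\frac{N'}N\bigr)p_s'-f(p_s)$ yields
\[
\mathscr E'(r)=p_s'(r)\bigl(p_s''(r)+f(p_s(r))\bigr)=-\left(\frac{d-1}r+2\frac{N'(r)}{N(r)}\right)p_s'(r)^2\leq0,
\]
the inequality being precisely \eqref{Eq:Rate}, i.e.\ $\frac{d-1}r+2\frac{N'}N\geq0$. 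Since $p_s'(0)=0$ this gives $\mathscr E(r)\leq\mathscr E(0)=F(s\theta)$ on $[0,R]$, and $F(s\theta)\leq0$ because $f<0$ on $(0,\theta)$, with strict inequality if $s>0$. If $p_s$ reached the value $1$, first at some $r_1\in(0,R]$, then $\mathscr E(r_1)=\frac12p_s'(r_1)^2+F(1)\geq F(1)>0$ (using $\int_0^1f>0$), contradicting $\mathscr E(r_1)\leq F(s\theta)\leq0$; hence $p_s<1$. Likewise, if $s>0$ and $p_s$ reached $0$, first at some $r_0>0$, then $\mathscr E(r_0)=\frac12p_s'(r_0)^2\geq0>F(s\theta)\geq\mathscr E(r_0)$, again a contradiction; so $p_s>0$ when $s>0$, while $p_0\equiv0$ by uniqueness. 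Thus $0\leq p_s\leq1$ for all $s\in[0,1]$ (indeed $0<p_s<1$ for $s\in(0,1)$), which also shows the chosen Lipschitz extension of $f$ was immaterial. The genuine obstacle here is this last step: without the monotonicity of the weighted radial energy the solution of \eqref{Eq:ConsRate1} need not remain in $[0,1]$, and \eqref{Eq:Rate} is exactly the hypothesis that makes $\mathscr E$ dissipative and hence forces admissibility.
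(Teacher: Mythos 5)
Your existence--uniqueness argument is essentially the paper's own proof: both solve the Volterra integral reformulation by a contraction (the paper contracts on a small interval $(0;r_1)$ and then continues by Cauchy--Lipschitz, while you use the iterated-kernel estimate $\lVert T^np-T^nq\rVert\leq\frac{(CR)^n}{n!}\lVert p-q\rVert$ on all of $[0,R]$ -- a trivial variant). The admissibility bound $0\leq p_s\leq1$ you prove at the end is not part of Claim \ref{Cl:ExisRate} but is the content of the subsequent Claim \ref{Cl:Adm}, where the paper uses exactly your energy-monotonicity argument under \eqref{Eq:Rate}.
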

\begin{proof}[Proof of Claim \ref{Cl:ExisRate}]
{\color{black}This follows from a standard contraction argument. Define, on $L^\infty(0;r_1)$ where $r_1<R$ will be fixed later on, the map

$$T:\p\mapsto s\theta+\int_0^r\frac{1}{l^{d-1}N^2}\int_0^l-t^{d-1}f(\p)N^2dtdl$$

We have the following estimate\begin{align*}
 \|T\p-T\phi\|_{L^\infty}&\leq \int_0^r\frac{1}{l^{d-1}N^2}\int_0^lt^{d-1}M\|\p-\phi\|_{L^\infty}N^2dtdl\\
 &\leq M \|\p-\phi\|_{L^\infty}\|N^2\|_{L^\infty}\left\|\frac{1}{N^2}\right\|_{L^\infty}\frac{r^2}{d}
\end{align*}
where $M$ is the Lipschitz constant of $f$. If $r_1$ is small enough, $T$ is a contraction in $L^\infty(0;r_1)$ and so  existence and uniqueness of a solution follows in $(0;r_1)$. In $(r_1;R)$, the standard Cauchy-Lipschitz theory applies.
}\end{proof}
\begin{claim}\label{Cl:Adm}
Under Assumption \ref{Eq:Rate} the path is admissible: we have, for any $s\in [0,1]$, 
\begin{equation}\label{Eq:AdmEst}0\leq p_s\leq 1.\end{equation}
Furthermore, the path $\{p_s\}_{s\in [0,1]}$ is continuous in the $\mathscr C^0$ topology.
\end{claim}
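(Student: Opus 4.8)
The plan is to reproduce, in the radial variable, the energy argument of \cite{BRZ}; the one point specific to the present setting is that Assumption \eqref{Eq:Rate} is exactly the condition that makes the natural energy monotone along the radial solutions, which is where the hypothesis will be used.

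First I would introduce, for each $s\in[0,1]$ and for the solution $p_s$ of \eqref{Eq:ConsRate1} provided by Claim \ref{Cl:ExisRate}, the energy
\[\mathscr E_s(r):=\frac12 p_s'(r)^2+F(p_s(r)),\qquad F(p)=\int_0^p f .\]
Expanding \eqref{Eq:ConsRate1} into the equivalent form $p_s''+\big(\frac{d-1}{r}+2\frac{N'}{N}\big)p_s'+f(p_s)=0$ on $(0,R]$, one computes
\[\mathscr E_s'(r)=p_s'(r)\big(p_s''(r)+f(p_s(r))\big)=-\Big(\frac{d-1}{r}+2\frac{N'(r)}{N(r)}\Big)p_s'(r)^2 .\]
Assumption \eqref{Eq:Rate}, i.e.\ $N'(r)\geq-\frac{d-1}{2r}N(r)$, is precisely the statement that $\frac{d-1}{r}+2\frac{N'}{N}\geq 0$ on $(0,R]$, so $\mathscr E_s$ is non-increasing there. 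Since $p_s'(0)=0$ and $p_s(0)=s\theta$, this gives $\mathscr E_s(r)\leq \mathscr E_s(0)=F(s\theta)$ for every $r\in[0,R]$; and since $f<0$ on $(0,\theta)$ we have $F(0)=0$ and $F(s\theta)<0$ for $s\in(0,1]$, hence $\mathscr E_s(r)\leq F(s\theta)\leq 0$ throughout $[0,R]$.

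From this I would deduce the admissibility bounds \eqref{Eq:AdmEst} by a soft argument. For $s=0$, uniqueness in Claim \ref{Cl:ExisRate} forces $p_0\equiv 0$. For $s\in(0,1]$: if $p_s$ took the value $1$ at some $r^\sharp\in[0,R]$ then $\mathscr E_s(r^\sharp)\geq F(1)=\int_0^1 f>0$, contradicting $\mathscr E_s(r^\sharp)\leq 0$; and if $p_s$ took the value $0$ at some $r_0\in(0,R]$ then $\mathscr E_s(r_0)=\frac12 p_s'(r_0)^2\geq 0$, contradicting $\mathscr E_s(r_0)\leq F(s\theta)<0$. Since $p_s(0)=s\theta\in(0,1)$, this forces $0<p_s<1$ on $[0,R]$, so $0\leq p_s\leq 1$ on $[0,R]$ in all cases and a fortiori \eqref{Eq:AdmEst} on $\overline\O$. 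For the $\mathscr C^0$-continuity of $s\mapsto p_s$, I would invoke continuous dependence on the parameter, which enters \eqref{Eq:ConsRate1} only through $p_s(0)=s\theta$: on the small interval $[0,r_1]$ of Claim \ref{Cl:ExisRate} the fixed-point map is a $k$-contraction with $k<1$ uniformly in $s$ (and the bounds just obtained keep the argument in the range where $f$ is Lipschitz), giving $\|p_s-p_{s'}\|_{L^\infty(0,r_1)}\leq\frac{\theta}{1-k}|s-s'|$ together with a similar bound on $p_s'(r_1)$; the usual continuous dependence of the Cauchy--Lipschitz problem on $[r_1,R]$ then propagates this into $\|p_s-p_{s'}\|_{\mathscr C^0(\overline{\mathbb B(0;R)})}\to 0$, hence the same for the restrictions to $\overline\O$.

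I do not expect a genuine obstacle here: the whole thing is a maximum-principle-free energy estimate, and the only mildly delicate point — regularity and continuous dependence at the singular endpoint $r=0$ — is handled through the integral formulation already used for Claim \ref{Cl:ExisRate}. The single thing one must check with care is that the sign in \eqref{Eq:Rate} is consistent with the direction needed for $\mathscr E_s$ to be non-increasing, which it is by design.
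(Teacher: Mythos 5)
Your proof of the admissibility bounds \eqref{Eq:AdmEst} is correct and is essentially the argument of the paper: the same energy $\frac12 (p_s')^2+F(p_s)$, the same observation that \eqref{Eq:Rate} is exactly the condition making it non-increasing along \eqref{Eq:ConsRate1}, and the same pair of contradictions ($\mathscr E_s\geq 0$ if $p_s$ hits $0$, $\mathscr E_s\geq F(1)>0$ if it hits $1$) against $\mathscr E_s(r)\leq F(s\theta)<0$.

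For the $\mathscr C^0$-continuity of $s\mapsto p_s$ you take a genuinely different (and slightly stronger) route. The paper argues by compactness: the integral formulation \eqref{Eq:Integral} gives uniform $W^{1,\infty}$ bounds, H\"older elliptic regularity upgrades these to uniform $\mathscr C^{2,\alpha}$ bounds on $\mathbb B(0;R)$, Arzel\`a--Ascoli extracts a $\mathscr C^1$-convergent subsequence, and the limit is identified with $p_s$ by passing to the limit in the weak formulation, in the initial condition, and in the radiality identity, then invoking uniqueness. You instead prove continuous dependence directly: the fixed-point map of Claim \ref{Cl:ExisRate} is a contraction uniformly in $s$ and depends on $s$ only through the additive term $s\theta$, which yields a Lipschitz estimate $\|p_s-p_{s'}\|_{L^\infty(0;r_1)}\leq \frac{\theta}{1-k}|s-s'|$, propagated to $[r_1;R]$ by standard continuous dependence for the (now non-singular) ODE. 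Your approach buys an explicit modulus of continuity and avoids elliptic regularity entirely; the paper's approach is softer and generalizes more readily to situations where no uniform contraction is available, but it needs the uniqueness statement to identify subsequential limits. Both are complete; the one point you should make explicit is that the continuous-dependence constants on $[r_1;R]$ are uniform in $s$, which follows from the a priori bound $0\leq p_s\leq 1$ you have already established.
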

\begin{proof}[Proof of Claim \ref{Cl:Adm}]
\begin{enumerate}
\item\underline{Admissibility of the path under Assumption \ref{Eq:Rate}:} We now prove  Estimate \eqref{Eq:AdmEst}. To do so, we introduce the energy functional 
$$\mathscr E_1:x\mapsto \frac12 (p_s'(x))^2+F(p_s(x)),$$ where $F:x\mapsto \int_0^x f$ is the antiderivative of $f$.
Differentiating $\mathscr E_1$ with respect to $x$, we get
\begin{align*}
\mathscr E_1'(x)&=\left(p_s''(x)+f(p_s)\right)p_s'(x)&
\\&=\left(-\frac{d-1}r-2\frac{N'(r)}{N(r)}\right)(p_s'(r))^2&\text{ from Equation \eqref{Eq:ConsRate1}}
\\&\leq 0&\text{ from Hypothesis \ref{Eq:Rate}}.
\end{align*}
In particular, we have, for any $s\neq 0$,  $p_s\neq 0$ in $(0;R)$: arguing by contradiction if, for $\underline x \in (0;R)$ we had $p_s(\underline x)=0$ then 
$$\mathscr E_1(\underline x)=\frac12\left(p_s'(\underline x)\right)^2\geq 0.$$
However, $\mathscr E_1(0)=F(s\theta)<0$, so that a contradiction follows. For the same reason, $p_s\neq 1$ in $[0;R]$, for otherwise , if $p_s(\overline x)=1$ at some $\overline x\in [0,1]$ we would have 
$$\mathscr E_1(\overline x)\geq F(1)>0,$$ which is once again a contradiction. It follows that, for any $s\in (0;1]$, 
$$0\leq p_s\leq 1,$$ as claimed. 
\item\underline{Continuity of the path:} 
We want to prove the $\mathscr C^0$ continuity of the path. Let $s\in [0,1]$ and let $\{s_k\}_{k\in \N}\in [0,1]^\N$ be a sequence such that 
$$s_k\underset{k\to \infty}\rightarrow s.$$
Let $p_k:=p_{s_k}$. Our goal is to show that
\begin{equation}\label{Eq:ConvPath}p_k\underset{k\to \infty}{\overset{\mathscr C^0(\mathbb B(0;R))}\rightarrow}p_s.\end{equation} We will use elliptic regularity to ensure that. We first derive a $W^{1,\infty}$ estimate from the one-dimensional equation, and use it to obtain, \color{black}for any $\alpha\in (0;1)$, \color{black} a $\mathscr C^{2,\alpha}$ estimate for the equation set in $\mathbb B(0;R)$. By the admissibility of the path we have, for every $k\in \N$,
$$0\leq p_k\leq 1.$$
Passing into radial coordinates and integrating Equation \eqref{Eq:ConsRate1} between 0 and $x$ gives
\begin{equation}\label{Eq:Integral}-p_k'(x)=\frac1{N^2(x)x^{d-1}}\int_0^x f\left(p_k(s)\right)N^2(s)s^{d-1}ds.\end{equation}
Thus  the sequence $\{p_k\}_{k\in \N}$ is uniformly bounded in $W^{1,\infty}((0;1))$. We now consider Equation \eqref{Eq:ConsRate}. Since, \color{black} by the first step, \color{black} $\{p_k\}_{k\in \N}$ is uniformly bounded in $\mathscr C^{0,\alpha}(\mathbb B(0;R))$ \color{black} for any $\alpha \in(0;1)$, \color{black} and since $N\in \mathscr C^\infty(\mathbb B(0;R))$, it follows from H\"{o}lder elliptic regularity (see \cite{GilbargTrudinger}) that, \color{black} for any $\alpha \in (0;1)$,  there exists $M_\alpha \in \R$ \color{black}such that, for every $k\in \N$,
$$||p_k||_{\mathscr C^{2,\alpha}(\mathbb B(0;R))}\leq M_{\color{black}\alpha\color{black}}$$ hence $\{p_k\}_{k\in \N}$ converges in $\mathscr C^1(\mathbb B(0;R))$, up to a subsequence, to 
$p_\infty$. Passing to the limit in the weak formulation of the equation, we see that $p_\infty$ satisfies 
$$-\n \cdot \left(N^2\n p_\infty\right)=f(p_\infty)N^2.$$
Passing to the limit in $$\forall k\in \N\,, p_k(0)=s_k\theta$$ we get $p_\infty(0)=s\theta$ and, finally, since for every $k\in \N$, $p_k$ is radial, i.e
$$\forall k\in \N\,, \forall i,j\in \{1,\dots,d\}\,, x_j\frac{\partial p_k}{\partial x_i}-x_i\frac{\partial p_k}{\partial x_j}=0,$$ we can pass to the limit in this identity to obtain that $p_\infty$ is radial. In particular, 
$$p_\infty=p_s$$ and so the continuity of the path holds.
\end{enumerate}
\end{proof}
To conclude the proof of  Theorem \ref{Th:Rate}, it suffices to apply the staircase method.
\end{proof}

\section{Proof of Proposition \ref{Th:GF}: high-infection rate models}\label{Proof5}
\begin{proof}[Proof of Proposition \ref{Th:GF}]
The proof consists in transforming the equation 
\begin{equation}\label{Eq:GF1}\frac{\partial p}{\partial t}-\Delta p-2 \frac{N'}{N}(p)|\n p|^2=f(p),\end{equation} in a simpler one. This is done by following the idea of \cite[Proof of Theorem 1]{NadinStrugarekVauchelet}.  Let us introduce the anti-derivative of $N^2$ as
$$\mathscr N:x\mapsto \int_0^x N^2(\xi)d\xi,$$ We first note that multiplying $N$ by any factor $\lambda$ leaves the equation \eqref{Eq:GF1} invariant. We thus fix 
$$\int_0^1 N^2(\xi)d\xi=1.$$
Multiplying \eqref{Eq:GF1} by $ N^2$ we get 
$$N^2(p)\frac{\partial p}{\partial t}-N^2(p)\Delta p-2N(p)N'(p)|\n p|^2=\left(\mathscr N(p)\right)_t-\n \cdot\left(N^2(p)\n p\right)=\left(\mathscr N(p)\right)_t-\Delta (\mathscr N(p)).$$
Hence, as $\mathscr N$ is a diffeomorphism, the function $\tilde p:=\mathscr N(p)$ satisfies
$$ \frac{\partial \tilde p}{\partial t}-\Delta \tilde p=f\left(\mathscr N^{-1}(\tilde p)\right)N^2\left(\mathscr N^{-1}(\tilde p)\right)=:\tilde f(\tilde p).$$
However, it is easy to see that, $f$ being bistable, so is $\tilde f$. Furthermore,  $\mathscr N$ is a $\mathscr C^1$ diffeomorphism of $[0,1]$, and it is easy to see that $p$ is controllable to $0\,, \mathcal{N}(\theta)$ or $1$ if and only if $\tilde p$ is controllable to $0,\theta$ or $1$, and we are thus reduced to the statement of \cite[Theorem 1.2]{BRZ}, from which the conclusion follows.
\end{proof}

\section{Conclusion}\label{Conclusion}
\subsection{Obtaining the results for general coupled systems}
As explained in the introduction, the equations considered in this article correspond to some scaling limits for more general coupled systems of reaction-diffusion equations, and it seems interesting to investigate whether or not the results we obtained in this article might be generalized to encompass the case of such general systems. As was explained in the introduction, these models can be used to control populations of infected mosquitoes and arise in evolutionary dynamics. Obtaining a finer understanding of the real underlying dynamics rather than the simplified version under scrutiny here seems, however, challenging. Indeed, although controllability results for linear systems of equations exist (see for instance \cite{LissyZuazua}), the non-linear case has not yet been completely studied. 

{\color{black}
From the application point of view, we observe that a qualitative understanding  of the heterogeneity is a must. Indeed, the mildness of the Assumptions \eqref{AssumptionDrift}-\eqref{AssumptionDrift2} prove that, whenever a localized sharp transition in this heterogeneity occurs, controllability to steady-states may fail.
}

However, given that, as explained in the Introduction, gene-flow models and spatially heterogeneous models are limits in a certain scaling of such systems, it would be interesting to see whether or not our perturbation arguments, that were introduced to pass from the spatially homogeneous model \color{black} to \color{black} the slowly varying one, could work to pass from this scaling limit to the whole system in a certain regime. 

{\color{black}
 In the homogeneous case, when $\int_0^1f=0$, there does not exist any nontrivial solution with boundary values $0$ or $1$ \cite{BRZ}. However, in the heterogeneous setting, there can exists such nontrivial solutions. Note that in the proof of the first point of Theorem \ref{NewTheo}, that is, for the blocking phenomenon towards 1,  we have not used the fact that the primitive at $1$ has a particular sign.
}

\color{black}
\subsection{Open problem}
Let us now list a few questions which, to the best of our knowledge, are still open and seem worth investigating.

\begin{enumerate}
\item[$\bullet$] The qualitative properties of time optimal controls:

As suggested in \cite{PoucholTrelatZuazua} one might try to optimize the control with respect to the controllability time. Indeed, its is known that, under constraints on the control, parabolic equations have a minimal controllability time, see for instance \cite{ZW,PighinZuazua}. 

For constrained controllability it is known that there exists a minimal controllability time to control, for instance, from 0 to $\theta$ (see \cite{PoucholTrelatZuazua}). We may try to optimize the control strategies so as to minimize the controllability time. In our case, that is, the spatially heterogeneous case, are these controls of bang-bang type?  Another qualitative question that is relevant in this context is that of  symmetry: in the one dimensional case, when working on an interval $[-L,L]$, are time-optimal controls symmetric? In the multi-dimensional case, when the domain $\O$ is a ball, is it possible to prove radial symmetry of time optimal controls?

\item[$\bullet$] The influence of spatial heterogeneity on controllability time: 

Adding a drift (which corresponds to the spatially heterogeneous model) modifies the controllability time. As we have seen, such heterogeneities might lead to a lack of controllability. However, it is also suggested in the numerical experiments shown below that adding a drift might be beneficial for the controllability time. It might be interesting to consider the following question: given $L^\infty$ and $L^1$ bounds on the spatial heterogeneity $N$, which is the drift yielding the minimal controllability time? In other terms: how can we design the domain so as to minimize the controllability time? In the simulation below, we thus considered the following optimization problem: letting, for any drift $m=N_x/N$, $T(m)$ be the minimal controllability time from 0 to $\theta$ of the spatially heterogeneous equation \eqref{Eq:AdvectionControl} (with $T(m)\in (0;+\infty]$), solve
$$\inf_{-M\leq m\leq M\,}T(m).$$
We obtain the following graph with $M=250$ and $L=2.5$:
\begin{center}
\begin{figure}[H]
\begin{center}
\includegraphics[scale=0.3]{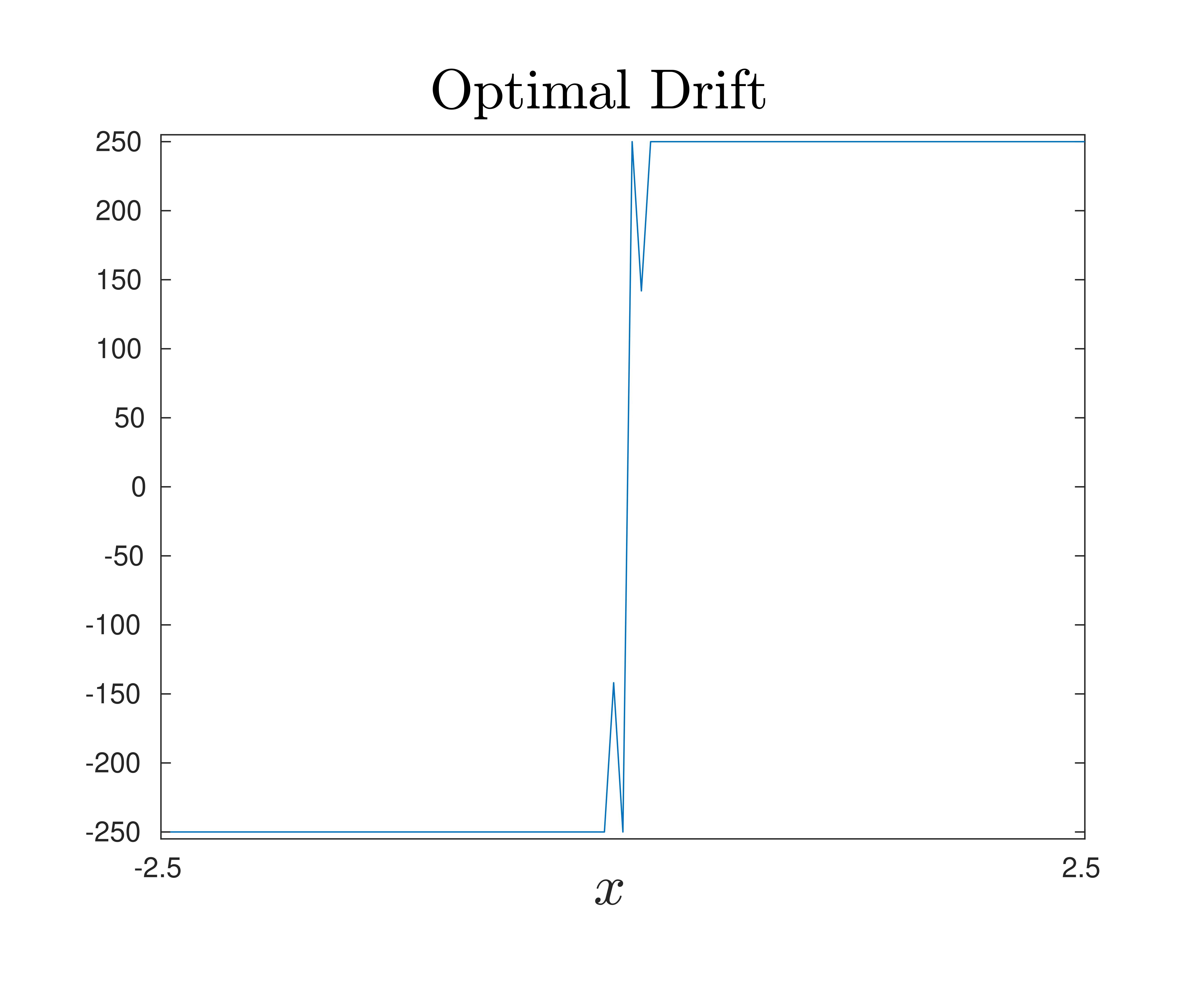}
\caption{Time optimal spatial heterogeneity.}\end{center}
\end{figure}\end{center}
\begin{center}
\begin{figure}[H]
\begin{center}
\includegraphics[scale=0.3]{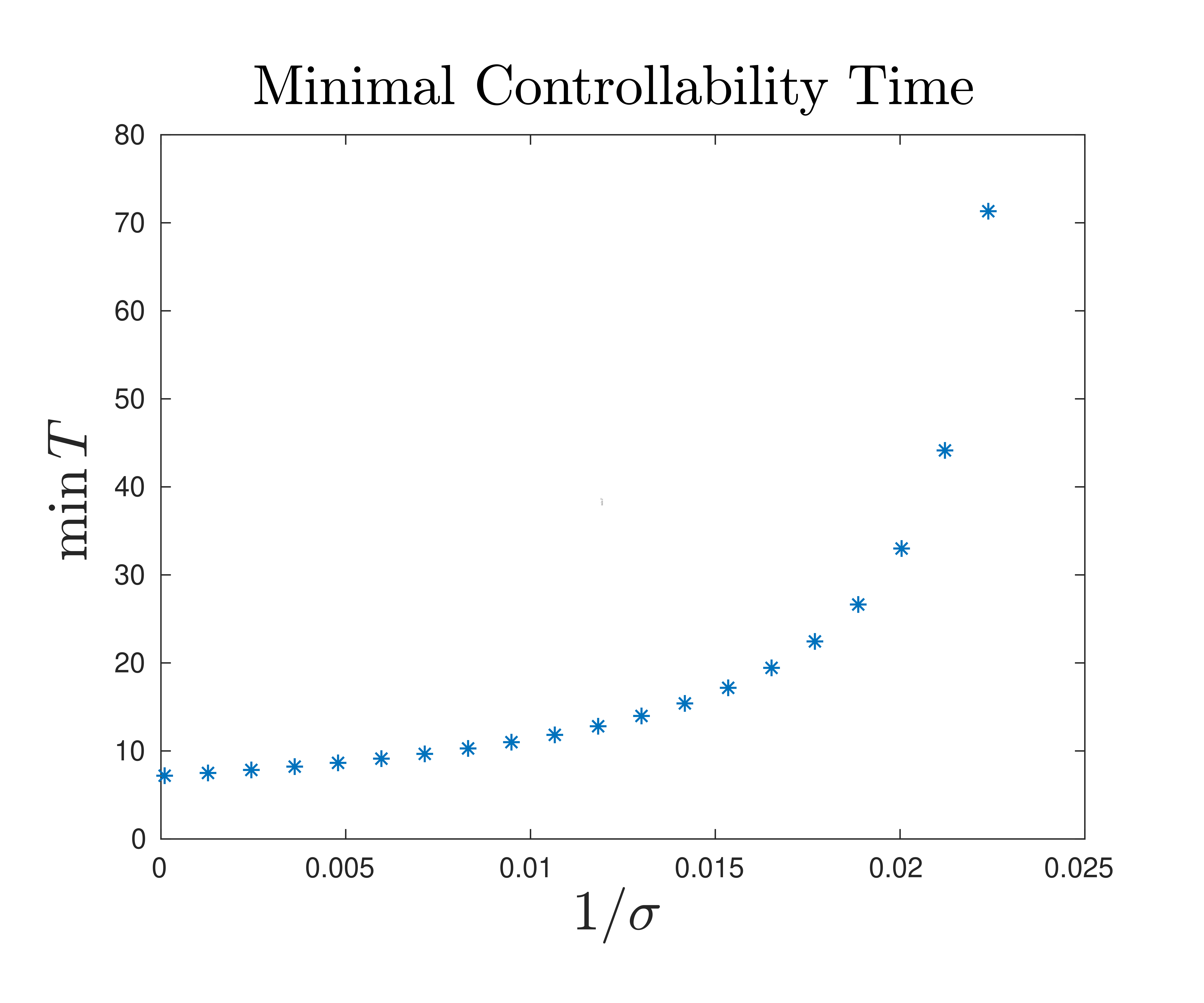}
\includegraphics[scale=0.3]{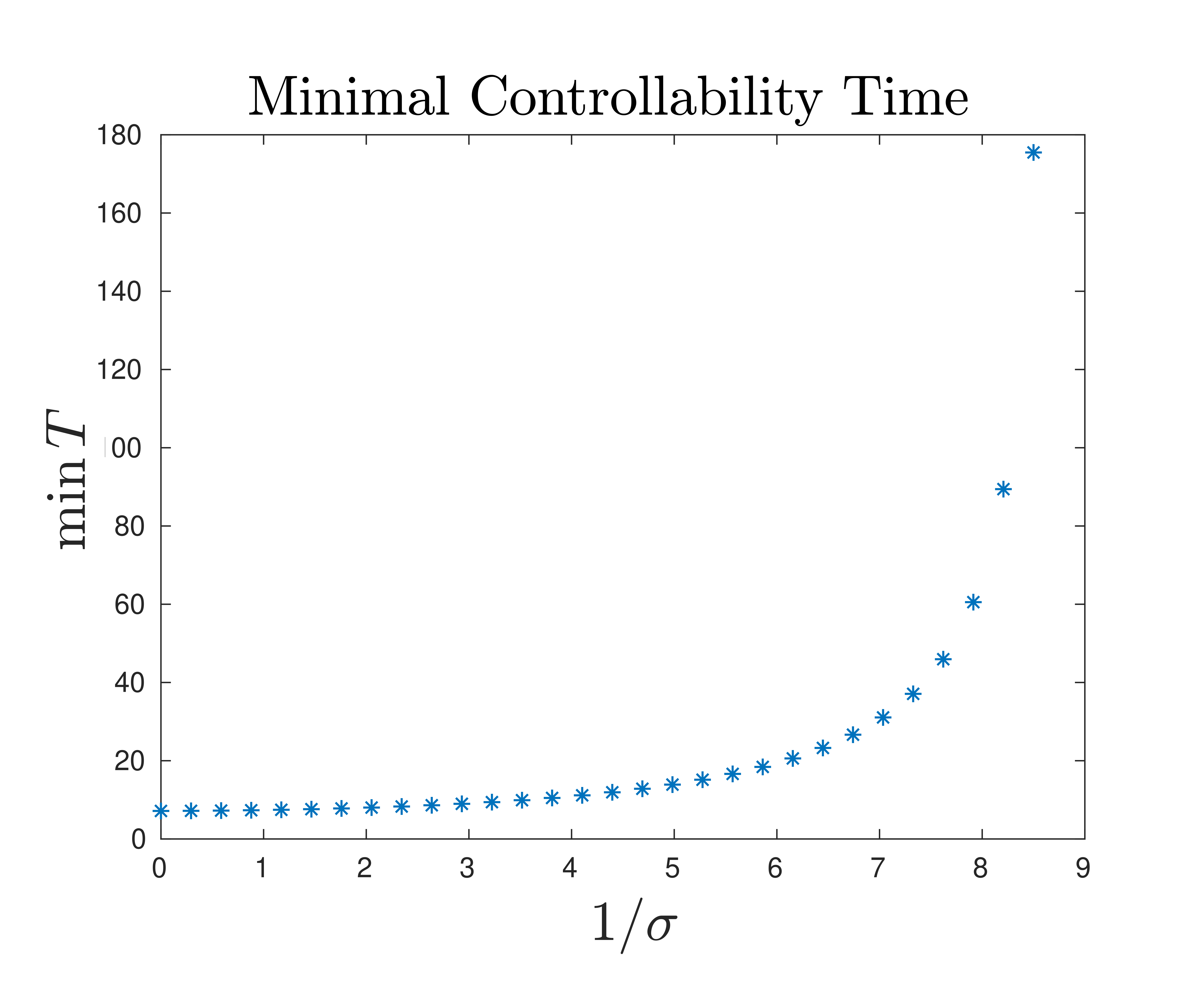}
\includegraphics[scale=0.3]{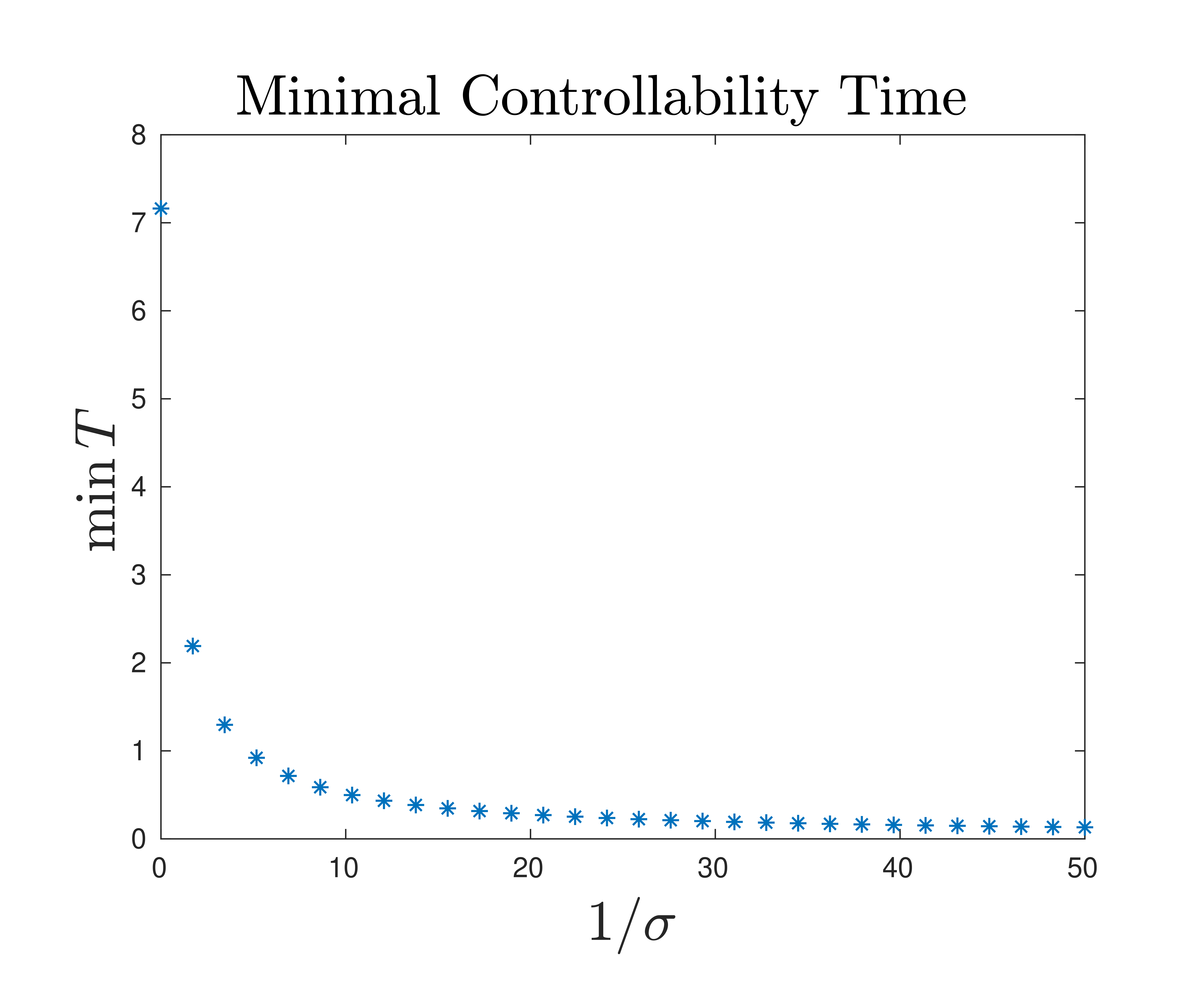}
\end{center}
\caption{Minimal controllability time depending on the strength of the drift. (Left) for $N=e^{-\frac{x^2}{\sigma}}$. (Center) $N'=\frac{\sin(x)}{\sigma}N$, (Right) $N=e^{\frac{x^2}{\sigma}}$}\label{Fig:MinContr}
\end{figure}\end{center}

In Figure \ref{Fig:MinContr}, we numerically observe that the minimal controllability time goes to zero for the case in which the radial derivative goes inwards while it blows up in the other two cases. For the case of the Gaussian, we observe the emergence of an upper barrier as the drift becomes stronger, the same is happening for the case of the sinusoidal drift. Even if these simulations may  fit  the intuition, a proper analysis of the minimal controllability time should be carried on.
\end{enumerate}
%
%
%
%
%
%
%
%
%
%
%

\appendix
\section{Proof of Lemma \ref{Le:NonTrivial}}\label{Ap:NonTrivial}

\begin{proof}[Proof of Lemma \ref{Le:NonTrivial}]
Let us first remark that \eqref{Eq:Le} has a variational structure. Indeed, $p$ is a solution of 
$$-\Delta p+\e\langle \n n\,, \n p\rangle=f(p)\,, p\in W^{1,2}_0(\O)$$ if and only if
\begin{equation}\label{Eq:Le2}-\n \cdot \left(e^{\e n}\n p\right)=f(p)e^{\e n}\,, p\in W^{1,2}_0(\O).\end{equation}
Following the arguments of \cite[Remark II.2]{LionsBerestycki}, we introduce the energy functional associated with \eqref{Eq:Le2}: let 
$$\mathscr E_1:W^{1,2}_0(\O)\ni p\mapsto \frac12\int_\O e^{\e n} |\n p|^2-\int_\O e^{\e n}F(p),$$
From standard arguments in the theory of sub and super solutions \cite{LionsBerestycki}, if there exists $v\in W^{1,2}_0(\O)$ such that 
\begin{equation}\label{Eq:EnergieNegative}\mathscr E_1(v)<0\end{equation} then there exists a non-trivial solution to \eqref{Eq:Le}. We now prove that there exists $v\in W^{1,2}_0(\O)$ such that \eqref{Eq:EnergieNegative} holds, by adapting the construction and computations of \cite{BRZ} (we only sketch the $d\geq 2$ case): let $\mathbb B(\overline x;\rho_\O)$ be one of the ball of maximum radius inscribed in $\O$. Up to a translation, we assume that $\overline x=0$.
\\Let $\delta>0$. We define $v_\delta$ as follows
$$
v_{\delta}:\left\{\begin{array}{ll}
x\in \mathbb B(0;\rho_\O-\delta)\mapsto 1,&
\\x\in \mathbb B(0;\rho_\O)\backslash\mathbb B(0;\rho_\O-\delta) \mapsto \frac{\rho_\O^2-||x||^2}{\rho_\O^2-(\rho_\O-\delta)^2},&
\\x\in  \O\backslash \mathbb B(0;\rho_\O)\mapsto 0.&\end{array}\right.$$
An explicit computation yields
$$\int_\O e^{\epsilon n}|\n v_\delta|^2\sim_{\delta \to0} C_1\delta \rho_\O^{d-1}e^{\epsilon n(\rho_\Omega)}$$ for some constant $C_1>0$, and 
$$\int_\O e^{\epsilon n}F(v_\delta)\geq C_2 F(1) (\rho_\O-\delta)^d .$$
Hence, since $n$ is bounded, as $\rho_\O$ grows, the second term in the energy functional will dominate and the conclusion follows: as $\rho_\O\to \infty$ and $\delta\to 0$ the energy of $v_1$ is negative.

\end{proof}

\paragraph{Acknowledgment.}The authors wish to thank B. Geshkovski for his numerous comments and P. Lissy for fruitful conversations. 

 The work of the authors has been funded by the Alexander von Humboldt-Professorship program, the European Research Council (ERC) under the European Union's Horizon 2020 research and innovation programme (grant agreement No. 694126-DyCon), grant MTM2017-92996 of MINECO (Spain), ELKARTEK project KK-2018/00083 ROAD2DC of the Basque Government, ICON of the French ANR and "Nonlocal PDEs: Analysis, Control and Beyond", AFOSR Grant FA9550-18-1-0242.

I. Mazari was  partially supported by the ANR Project ANR-18-CE40-0013 - SHAPO on Shape Optimization and by the Austrian Science Fund (FWF) through the grant I4052-N32.


\bibliographystyle{abbrv}
\bibliography{BiblioPrinc}

\end{document}